\theoremstyle{plain}
\newtheorem{theorem}{Theorem}[section]
\newtheorem{lemma}[theorem]{Lemma}
\newtheorem{proposition}[theorem]{Proposition}
\theoremstyle{definition}
\newtheorem{definition}[theorem]{Definition}
\theoremstyle{remark}
\newtheorem{remark}[theorem]{Remark}
\numberwithin{equation}{section}
\def\bold#1{\mbox{\boldmath $#1$}}
\newcommand{\uu}[1]{\bold{#1}}
\newcommand{\abs}[1]{\lvert#1\rvert}
\newcommand{\D}{\partial}
\newcommand{\dd}{\mathrm{d}}
\newcommand{\dive}{\mathrm{div}\,}
\newcommand{\bdive}{\uu{\mathrm{div}}\,}
\newcommand{\divM}{\mathrm{div}_{\mathcal{M}}}
\newcommand{\diam}{\mathrm{diam}}
\newcommand{\Dt}{\partial_t}
\newcommand{\gm}{\gamma}
\newcommand{\grd}{\nabla}
\newcommand{\bgrd}{\uu{\nabla}}
\newcommand{\mbb}{\mathbb}
\newcommand{\mcal}{\mathcal}
\newcommand{\CFL}{\mathrm{CFL}}
\newcommand{\norm}[1]{\lVert#1\rVert}
\newcommand{\Norm}[1]{{\left\vert\kern-0.25ex\left\vert\kern-0.25ex\left\vert #1 
    \right\vert\kern-0.25ex\right\vert\kern-0.25ex\right\vert}}
\newcommand{\sign}{\mathrm{sign}}
\newcommand{\half}{\frac{1}{2}}
\newcommand{\veps}{\varepsilon}
\newcommand{\eps}{\epsilon}
\newcommand{\s}{\sigma}
\newcommand{\M}{\mcal{M}}
\newcommand{\E}{\mcal{E}}
\newcommand{\Ds}{D_\sigma}
\newcommand{\Eds}{\tilde{\mcal{E}}(D_\sigma)}
\newcommand{\Eint}{\mcal{E}_{\mathrm{int}}}
\newcommand{\Lm}{L_{\M}(\Omega)}
\newcommand{\Hez}{\uu{H}_{\E,0}(\Omega)}
\newcommand{\dt}{\delta t}
\newcommand{\chark}{\mcal{X}_K}
\newcommand{\fesig}{F_{\epsilon , \sigma}}
\newcommand{\intr}{\mathrm{int}}
\newcommand{\extr}{\mathrm{ext}}
\newcommand{\absq}[1]{\abs{#1}^2}
\def\bold#1{\mbox{\boldmath $#1$}}
\begin{document}

\title[Asymptotic Preserving and Energy Stable Scheme]{An Asymptotic
  Preserving and Energy Stable Scheme for the Barotropic Euler System
  in the Incompressible Limit}         
 
\author[Arun]{K.~R.~Arun}
\thanks{K.~R.~A.\ gratefully acknowledges Core Research Grant -
  CRG/2021/004078 from Science and Engineering Research Board,
  Department of Science \& Technology, Government of India.} 
\address{School of Mathematics, Indian Institute of Science Education
  and Research Thiruvananthapuram, Thiruvananthapuram 695551, India}  
\email{arun@iisertvm.ac.in, rahuldev19@iisertvm.ac.in,
  mainak17@iisertvm.ac.in}

\author[Ghorai]{Rahuldev Ghorai}

\author[Kar]{Mainak Kar}



\date{\today}

\subjclass[2010]{Primary 35L45, 35L60, 35L65, 35L67; Secondary 65M06, 65M08}

\keywords{Compressible Euler system, Incompressible limit, Asymptotic
  preserving, Finite volume method, MAC grid, Entropy stability}   

\begin{abstract}
    An asymptotic preserving and energy stable scheme for the
    barotropic Euler system under the low Mach number scaling is
    designed and analysed. A velocity shift proportional to the
    pressure gradient is introduced in the convective fluxes, which
    leads to the dissipation of mechanical energy and the entropy
    stability at all Mach numbers. The resolution of the semi-implicit
    in time and upwind finite volume in space fully-discrete scheme involves two
    steps: the solution of an elliptic problem for the density and an
    explicit evaluation for the velocity. The proposed scheme
    possesses several physically relevant attributes, such as the
    positivity of density, the entropy stability and the consistency
    with the weak formulation of the continuous Euler system. The AP
    property of the scheme, i.e.\ the boundedness of the mesh
    parameters with respect to the Mach number and its consistency
    with the incompressible limit system, is shown rigorously. The
    results of extensive case studies are presented to substantiate
    the robustness and efficiency of the proposed method.       
\end{abstract}

\maketitle

\section{Introduction}
\label{sec:Int}

Mechanical processes acting on different spatial and temporal scales
are quite common in many applications in physics, engineering and
industry. Compressible fluid flow problems modelled by the Euler
equations often give rise to such multiscale phenomena. At room
temperatures, pressure pulses travel at a speed of about 343 meters
per second whereas a typical fluid motion under such conditions is
usually for about 10 meters per second. In other words, there can
exist two different velocity scales within the same physical problem
and this apparent disparity in the wave-speeds is usually quantified
using the Mach number which is the ratio of convection velocity to the
sound velocity. From a mathematical point of view, the hydrodynamics
at low Mach numbers merits a careful attention. It has been well
established that solutions of the compressible Euler equations
converge to their incompressible counterparts when the Mach number
goes to zero; see e.g.\ \cite{KM82,Sch94}. In the mathematical
literature, the zero Mach number limit is often treated as a singular
limit under which the purely hyperbolic compressible Euler system
changes its nature to the mixed hyperbolic-elliptic incompressible
Euler system; see \cite{LM98} and the references therein for a
rigorous treatment.     

Since the zero Mach number limit is singular for the compressible
Euler equations, classical explicit time-stepping methods designed for
hyperbolic conservation laws are inadequate for low Mach number
computations. Indeed, explicit schemes are known to exhibit severe
pathologies when used to simulate low Mach number flows; see
\cite{Kle95} for a review. The difficulties associated with the
numerical solution of Euler equations at low Mach numbers are
numerous. Due to the large discrepancy between the fluid and sound
velocities, the CFL condition imposes an acute restriction on the
time-step to maintain stability and this in turn contributes to
stiffness and prohibitive computational costs. One of the early
attempts in this direction to circumvent the stiffness by using
semi-implicit schemes for low Mach number hydrodynamics is available 
in \cite{MRK+03,PM05}. Extension of these ideas to high orders and
their implementation on general unstructured meshes can be found in
\cite{BBD+20,BRV+21, BTB+20,TV17}. If the numerical viscosity
introduced by upwind discretisations depends on the Mach number, then
it can ultimately lead to instabilities or inconsistent numerical
solutions; see e.g.\ \cite{GV99} for more details. Furthermore, it has
been also reported that the presence of spurious acoustic waves can
deteriorate the order of accuracy of Godunov-type explicit finite
volume schemes at low Mach numbers; see \cite{ADS21,AS20,Del10}.     

An effective platform to design robust numerical approximations for
the low Mach number Euler equations, or singularly perturbed
hydrodynamic models of fluids in general, is the so-called `Asymptotic
Preserving' (AP) methodology which was introduced in the context of
kinetic models of diffusive transport \cite{Jin99}. AP schemes provide
a general framework to numerically tackle any singular perturbation
problem and their working principle can be explained as follows. Let
$\mcal{P}_{\veps}$ denote a singularly perturbed problem with $\veps$,
the perturbation parameter. Suppose that in the limit as $\veps \to
0$, the solution of $\mcal{P}_{\veps}$ converges to the solution of a
well-posed problem denoted by  $\mcal{P}_{0}$, called the singular
limit or the limit problem. A numerical scheme for $\mcal{P}_{\veps}$,
denoted by $\mcal{P}_{\veps}^{h}$ with $h$ being a discretisation
parameter, is said to be AP if     
\begin{enumerate}[label=(\roman*)]
\item as $\veps \to 0$ the numerical scheme $\mcal{P}_{\veps}^{h}$
  converges to a numerical scheme $\mcal{P}_{0}^{h}$, which is a
  consistent discretisation of the limit system $\mcal{P}_{0}$ and  
\item the stability constraints on the discretisation parameter $h$
  are independent of $\veps$. 
\end{enumerate}
Therefore, the AP methodology turns out to be a natural choice for the
numerical approximation of the zero Mach number limit in the sense
that it respects the limit at a discrete level. Furthermore, the AP
framework automatically recognises the singular (weakly compressible
or nearly incompressible) and non-singular (compressible) regions in
the flow as well as the transient regions where regime shifts take
place. Thus, using an AP discretisation for low Mach number flows is
an effective method that drastically reduces the computational
complexity while simultaneously enhancing the accuracy.  

A common practice to design AP schemes for hydrodynamic models, such
as the Euler equations, is to employ an implicit-explicit (IMEX) time
discretisation. We refer the reader to 
\cite{AKS22,AS20,BAL+14,BJR+19,BRS18,CGK16,DT11,HJL12,HLS21,NBA+14,TPK20}
for some developments on IMEX AP schemes for stiff hydrodynamic
problems. The IMEX time-stepping procedure relies on a stiff/non-stiff
splitting of the fluxes and subsequently, the stiff terms are treated
implicitly and the non-stiff terms explicitly. The semi-implicit
formalism has the advantage that it avoids the need to invert large
and dense matrices, typical of fully-implicit schemes, at the same
time it overcomes the restrictive stability conditions of
fully-explicit schemes. In the case of the barotropic Euler equations, a
semi-implicit AP scheme necessitates the implicit treatment of the
mass flux and the pressure gradient in the momentum flux
\cite{AS20,BRS18,DT11}. A reformulation of the resulting scheme can be
performed, nonetheless, to yield an elliptic equation for the pressure
which is nothing but the time discretisation of a wave equation for
the pressure in the continuous case. However, the discrete diffusion
operator in the above elliptic problem, arising by combining discrete
gradient and divergence operators, does not satisfy the so-called
`inf-sup' condition. As a result, the scheme might require an
additional stabilisation procedure at low Mach numbers
\cite{MKB+12}. A possible cure to this ailment is to extend the
standard, well-established discretisation techniques of
incompressible flows to the compressible case. A first attempt in this
direction can be found in \cite{HA68} which can be seen as an 
extension of the well-known `Marker and Cell' (MAC) scheme
\cite{HW65}. We refer the interested reader to
\cite{BW98,CP99,IGW86,KP89,KSG+09,WPM02} for related treatments. Yet
another demand while dealing with the numerical approximation of
hyperbolic models of compressible fluids is to maintain the entropy
stability. Since the solutions are known to develop discontinuities in
finite time, the entropy inequalities are vital in choosing the
physically valid weak solutions. Obtaining entropy stable schemes,
though nontrivial, is mandatory in applications, such as the low Mach
number hydrodynamics, to derive energy estimates and to perform
rigorous asymptotic convergence analysis.  

The goal of the present work is the design and analysis of an AP,
semi-implicit and entropy stable scheme for the barotropic Euler
system on a MAC grid. The key to entropy stability is the introduction
of a shifted velocity in the convective fluxes of mass and momenta,
following the ideas introduced in
\cite{CDV17,DVB17,DVB20,GVV13,PV16}. The velocity shift is
proportional to the stiff pressure gradient, it stabilises the flow
via the dissipation of mechanical energy at all Mach numbers, and the
parameters involved in it have to respect a CFL-type condition. The
present scheme is closely related to the so-called non-incremental
projection scheme due to Chorin \cite{CHO68}. A semi-implicit time
discretisation is performed to overcome stiff stability
restrictions. The upwind space discretised fully-discrete scheme
satisfies the required apriori entropy stability inequalities as in
the continuous model. The velocity stabilised mass update gives rise
to a well-posed, semilinear elliptic problem for the pressure, thanks
to the MAC grid discretisation. After the resolution of the elliptic
problem, the momentum update can be evaluated explicitly. We carry out
a mathematical and numerical analysis of the proposed scheme that is
motivated to an extent by an analogous analysis performed for the
prediction-correction semi-implicit scheme in \cite{HLS21}. However,
the key differences are that the semi-implicit scheme of \cite{HLS21}
is multi-step and it requires an almost well-prepared initial datum to
obtain the AP property. For the present velocity stabilised
semi-implicit scheme, the existence of the numerical solution and the 
positivity of the mass density are established by exploiting some
topological degree theory results. A weak consistency analysis of the
numerical scheme is performed to establish its consistency with the
weak formulation of the Euler equations. A sufficient condition to
enforce the stability restriction shows the scheme's ability to admit
large time-steps in low Mach number regimes. The AP property of the
scheme, i.e.\ the boundedness of the time-steps with respect to the
Mach number and the consistency of the scheme with the incompressible
limit, is shown rigorously and numerically by means of extensive case
studies.   

The rest of this paper is organised as follows. In
Section~\ref{sec:cont-case} we recall some apriori energy stability
estimates that are crucial to establish the incompressible limit and
the velocity stabilisation technique based on the energy
stability. The discretisation of the domain and the discrete
differential operators are introduced in
Section~\ref{sec:MAC_disc_diff}. The semi-implicit numerical scheme
and its energy stability attributes are presented in
Section~\ref{sec:upwind_scheme}. In Section~\ref{sec:weak_cons} we
present the weak consistency results and in
Section~\ref{sec:cons_incomp}, the asymptotic consistency with the
incompressible Euler system. The results of numerical case studies are
presented in Section~\ref{sec:num_res}. Finally, the paper is
concluded with some remarks in Section~\ref{sec:conclusion}.  

\section{Apriori Energy Estimates and the Incompressible Limit}
\label{sec:cont-case}

We start with the following initial boundary problem for the
compressible Euler system parametrised by the Mach number $\veps$: 
\begin{align}
  \D_t\rho^\veps+\dive (\rho^\veps\uu{u}^\veps)&=0, \label{eq:cons_mas}\\ 
  \D_t(\rho^\veps\uu{u}^\veps)+\bdive
  (\rho^\veps\uu{u}^\veps\otimes\uu{u}^\veps)+\frac{1}{\veps^2}\bgrd p^\veps &=0, \label{eq:cons_mom} \\
  \rho^\veps\vert_{t=0}=\rho^\veps_0, \quad
  \uu{u}^\veps\vert_{t=0}=\uu{u}^\veps_0, \quad \uu{u}^\veps\vert_{\partial\Omega}=0, \label{eq:eq_ic}
\end{align}
for $(t,\uu{x})\in Q:=(0,T)\times\Omega$, where $\Omega$ is an open,
bounded and connected subset of $\mbb{R}^d$, $d\geq1$. The parameter
$\veps\in(0,1]$, usually known as the reference Mach number, is an
infinitesimal and is defined as the ratio of a characteristic fluid
velocity to that of a sound velocity. The variables $\rho^\veps$ and
$\uu{u}^\veps$ are the  density and velocity of the fluid,
respectively. The pressure $p^\veps = p(\rho^\veps)$ is assumed to follow a barotropic
equation of state $p(\rho):=\rho^\gamma$ with $\gamma\geq 1$ being the
ratio of specific heats. 

We first review some apriori energy estimates satisfied by the
solutions of \eqref{eq:cons_mas}-\eqref{eq:cons_mom}, which are needed
to perform the incompressible limit $\veps\to0$. To this end, we start
with the internal energy per unit volume or the so-called Helmholtz
function: 
\begin{equation}
   \label{eq:psi_gamma}
   \psi_\gamma(\rho) :=
   \begin{cases}
     \rho\ln\rho, & \mbox{if} \ \gamma=1, \\
     \dfrac{\rho^\gamma}{\gamma-1}, & \mbox{if} \ \gamma>1.
   \end{cases}  
\end{equation}
The internal energy $\mcal{I}_\veps$ and kinetic energy $\mcal{K}_\veps$ of the system
\eqref{eq:cons_mas}-\eqref{eq:cons_mom} are defined by 
\begin{equation}
  \mcal{I}_{\veps}(t):=\int_{\Omega}\psi_\gamma(\rho^\veps)\dd\uu{x}, \quad
  \mcal{K}_{\veps}(t):= \int_{\Omega}\frac{1}{2}\rho^\veps{\abs{\uu{u}^\veps}}^2
  \dd\uu{x}. \label{eq:int_kin_engy}
\end{equation}

\subsection{Apriori Energy Estimates}
\label{sec:engy_est}
\begin{proposition}
  \label{prop:engy_balance}
  We recall from \cite{HLS21} the following identities satisfied by
  regular solutions of \eqref{eq:cons_mas}-\eqref{eq:cons_mom}. 
  \begin{enumerate}[label=(\roman*)]
  \item A renormalisation identity:
    \begin{equation}
      \label{eq:renorm}
      \Dt\psi_\gamma(\rho^\veps)
      +\dive\left(\psi_\gamma(\rho^\veps)\uu{u}^\veps\right)+p^\veps\dive\uu{u}^\veps
      =0. 
    \end{equation}
  \item A positive renormalisation identity:
    \begin{equation}
      \label{eq:porenorm}
      \Dt\Pi_\gm(\rho^\veps)
      +\dive\left(\psi_\gamma(\rho^\veps)-\psi_\gamma^\prime(1)\rho^\veps\right)\uu{u}^\veps+p^\veps\dive\uu{u}^\veps=0,
    \end{equation}
    where
    $\Pi_\gm(\rho):=\psi_\gamma(\rho)-\psi_\gamma(1)-\psi_{\gamma}^{\prime}(1)(\rho-1)$ 
    is the relative internal energy, which is an affine approximation
    of $\psi_\gamma$ with respect to the constant state $\rho=1$. 
  \item The kinetic energy identity:
    \begin{equation}
      \label{eq:kinbal}
      \Dt\Big(\frac{1}{2}\rho^\veps{\abs{\uu{u}^\veps}}^2\Big)
      +\dive\Big(\frac{1}{2}\rho^\veps{\abs{\uu{u}^\veps}}^2\uu{u}^\veps\Big)
      +\frac{1}{\veps^2}\grd p^\veps\cdot\uu{u}^\veps
      =0.
    \end{equation}
  \item The total energy identity:
  \begin{equation}
  \label{eq:eng_id}
  \Dt\Big(\frac{1}{2}\rho^\veps{\abs{\uu{u}^\veps}}^2+\frac{1}{\veps^2}\Pi_\gamma(\rho^\veps)\Big)
      +\dive\Big(\frac{1}{2}\rho^\veps{\abs{\uu{u}^\veps}}^2+\frac{1}{\veps^2}\psi_\gamma(\rho^\veps)-\frac{1}{\veps^2}\psi_\gamma^{\prime}(1)\rho^\veps+\frac{1}{\veps^2}p^\veps\Big)\uu{u}^\veps=0.      
  \end{equation}
  \end{enumerate}
\end{proposition}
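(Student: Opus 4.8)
The plan is to derive all four identities algebraically from the two conservation laws \eqref{eq:cons_mas}--\eqref{eq:cons_mom}, the chain rule, and the single thermodynamic relation
\begin{equation*}
  p(\rho) = \rho\,\psi_\gamma'(\rho) - \psi_\gamma(\rho),
\end{equation*}
which one checks directly from the definition \eqref{eq:psi_gamma} in both cases $\gamma = 1$ and $\gamma > 1$. Throughout I would suppress the superscript $\veps$ and treat $\rho, \uu{u}$ as smooth, so that all products and compositions may be differentiated classically.

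For the renormalisation identity (i), I would multiply the continuity equation \eqref{eq:cons_mas} by $\psi_\gamma'(\rho)$. The time-derivative term becomes $\Dt\psi_\gamma(\rho)$ by the chain rule, while for the flux term I would use the product rule in the form $\psi_\gamma'(\rho)\dive(\rho\uu{u}) = \dive\bigl(\psi_\gamma(\rho)\uu{u}\bigr) + \bigl(\rho\,\psi_\gamma'(\rho) - \psi_\gamma(\rho)\bigr)\dive\uu{u}$; the thermodynamic relation then turns the last bracket into $p$, yielding (i). Identity (ii) follows immediately: since $\psi_\gamma(1)$ and $\psi_\gamma'(1)$ are constants, $\Dt\Pi_\gamma(\rho) = \Dt\psi_\gamma(\rho) - \psi_\gamma'(1)\Dt\rho$, and I would substitute $\Dt\psi_\gamma(\rho)$ from (i) and $\Dt\rho$ from \eqref{eq:cons_mas}, then merge the two divergence contributions into $\dive\bigl((\psi_\gamma(\rho) - \psi_\gamma'(1)\rho)\uu{u}\bigr)$.

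For the kinetic energy identity (iii), I would first recast the momentum balance in \emph{nonconservative} form by expanding $\Dt(\rho\uu{u}) + \bdive(\rho\uu{u}\otimes\uu{u}) = \uu{u}\bigl(\Dt\rho + \dive(\rho\uu{u})\bigr) + \rho\bigl(\Dt\uu{u} + (\uu{u}\cdot\grd)\uu{u}\bigr)$ and discarding the first group via \eqref{eq:cons_mas}. Dotting with $\uu{u}$ and using $\uu{u}\cdot\Dt\uu{u} = \tfrac12\Dt\abs{\uu{u}}^2$ together with $\uu{u}\cdot(\uu{u}\cdot\grd)\uu{u} = \tfrac12\uu{u}\cdot\grd\abs{\uu{u}}^2$ produces a balance for $\tfrac12\rho\,\Dt\abs{\uu{u}}^2 + \tfrac12\rho\,\uu{u}\cdot\grd\abs{\uu{u}}^2$ against $-\tfrac{1}{\veps^2}\uu{u}\cdot\grd p$. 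I would then reassemble the conservative left-hand side of (iii) by observing that $\Dt\bigl(\tfrac12\rho\abs{\uu{u}}^2\bigr) + \dive\bigl(\tfrac12\rho\abs{\uu{u}}^2\uu{u}\bigr)$ equals that same expression plus $\tfrac12\abs{\uu{u}}^2\bigl(\Dt\rho + \dive(\rho\uu{u})\bigr)$, the latter vanishing by continuity.

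Finally, the total energy identity (iv) is obtained by forming $\tfrac{1}{\veps^2}\times$(ii)$\,+\,$(iii). The two pressure contributions combine as $\tfrac{1}{\veps^2}\bigl(p\dive\uu{u} + \uu{u}\cdot\grd p\bigr) = \tfrac{1}{\veps^2}\dive(p\uu{u})$, and collecting this with the two transport fluxes yields precisely the single divergence stated in \eqref{eq:eng_id}. The computations are routine; the only point demanding care is the bookkeeping in (iii), where the conservative and nonconservative forms of the momentum balance must be reconciled through the continuity equation. The one structural fact that makes everything close is the thermodynamic relation $p = \rho\,\psi_\gamma' - \psi_\gamma$, which ties the pressure to the Helmholtz function and is what converts the multiplier $\psi_\gamma'(\rho)$ into the $p\dive\uu{u}$ term in (i).
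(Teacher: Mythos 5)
Your proposal is correct: the identities follow exactly by the routine manipulations you describe, and the key relation $p(\rho)=\rho\,\psi_\gamma'(\rho)-\psi_\gamma(\rho)$ (valid in both cases of \eqref{eq:psi_gamma}) is indeed what closes the computation in (i). This is essentially the same approach as the paper, which simply defers these ``straightforward calculations'' to \cite{HLS21}; you have just written them out in full.
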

\begin{proof}
The proof follows from straightforward calculations; see \cite{HLS21} and the references therein.
\end{proof}
\subsection{Incompressible Limit}
\label{sec:incomp_lim}
In this section we introduce the zero Mach number or the
incompressible limit of the Euler system
\eqref{eq:cons_mas}-\eqref{eq:cons_mom}, which is obtained by letting
$\veps\to0$. In order to perform the above limit, we first introduce
the notion of the so-called `ill-prepared' initial data; see
\cite{HLS21} for further details and also \cite{Del10} for some
related discussions on the relevance of initial data in carrying out
the incompressible limit. 
\begin{definition}
    \label{def:ill_prep_id}
    An initial datum $(\rho^\veps_0,\uu{u}^{\veps}_0)$ of
    \eqref{eq:cons_mas}-\eqref{eq:cons_mom} is called ill-prepared if
    $(\rho^{\veps}_{0},\uu{u}^{\veps}_{0})\in
    L^{\infty}(\Omega)^{1+d}$ with $\rho^{\veps}_{0}>0$, and satisfy
    the bound:  
    \begin{equation}
    \label{eq:ill_prep_id}
        \norm{\uu{u}_{0}^{\veps}}_{L^{2}(\Omega)^d}+\frac{1}{\veps}\norm{\rho_{0}^{\veps}-1}_{L^{\infty}(\Omega)}\leq C,
    \end{equation}
    where $C>0$ is a constant that does not depend on $\veps$. 
\end{definition}
\begin{remark}
    The estimate \eqref{eq:ill_prep_id} implies that $\rho_0^\veps-1=\mcal{O}(\veps)$ and that $\uu{u}^\veps_0$ is uniformly bounded in $L^{2}(\Omega)^d$ with respect to $\veps$. The so-called `well-prepared' data from the literature, e.g.\ \cite{KM82,Sch94}, requires more stringent restrictions, such as $\rho_{0}^{\veps}-1=\mcal{O}(\veps^2)$, a uniform bound on $\uu{u}_0^\veps$ in $H^1(\Omega)^d$ and $\dive\uu{u}_0^\veps$ be close to zero.
\end{remark}

In the subsequent discussions we will assume the existence of a weak solution $(\rho^\veps,\uu{u}^\veps)\in L^\infty(Q)^{1+d}$ of \eqref{eq:cons_mas}-\eqref{eq:cons_mom} that satisfies the total entropy inequality:
 \begin{equation}
 \label{eq:tot_ent_ineq_cont}
 \Dt\Big(\frac{1}{2}\rho^\veps{\abs{\uu{u}^\veps}}^2+\frac{1}{\veps^2}\Pi_\gamma(\rho^\veps)\Big)
       +\dive\Big(\frac{1}{2}\rho^\veps{\abs{\uu{u}^\veps}}^2+\frac{1}{\veps^2}\psi_\gamma(\rho^\veps)-\frac{1}{\veps^2}\psi_\gamma^{\prime}(1)\rho^\veps+\frac{1}{\veps^2}p^\veps\Big)\uu{u}^\veps\leq 0.      
 \end{equation}
 Integrating \eqref{eq:tot_ent_ineq_cont} over $Q$ and taking into account the homogeneous Dirichlet boundary condition on the velocity, we further obtain that $(\rho^\veps,\uu{u}^\veps)$ satisfies the following total entropy estimate:
 \begin{equation}
 \label{eq:tot_ent_est_cont}
     \half\int_\Omega\rho^{\veps}(t)\absq{\uu{u}^\veps(t)}\dd\uu{x}+\frac{1}{\veps^2}\int_\Omega\Pi_\gamma(\rho^\veps(t))\dd\uu{x}\leq \half\int_\Omega\rho^{\veps}_0\absq{\uu{u}^\veps_0}\dd\uu{x}+\frac{1}{\veps^2}\int_\Omega\Pi_\gamma(\rho^\veps_0)\dd\uu{x}.
 \end{equation}
 The entropy estimate \eqref{eq:tot_ent_est_cont} yields the
 convergence of $\rho^\veps$ to $1$ as $\veps$ goes to $0$. Precisely,
 it can be shown that $\rho^\veps\to 1$ in $L^\infty(0,T;L^r(\Omega))$
 for every $r\in[1,\mathrm{min}\{2,\gamma\}]$ and
 $\uu{u}^\veps\overset{\ast}{\rightharpoonup}\uu{U}$ in
 $L^\infty(0,T;L^2(\Omega)^d)$ as $\veps\to0$; see \cite{HLS21, KM82,
   LM98} and the references therein for details. Furthermore,
 $(\pi,\uu{U})\in L^\infty(0,T;L^2(\Omega))^{1+d}$ is a weak solution
 of the initial value problem  
\begin{gather}
    \dive \uu{U}=0, \\
    \Dt\uu{U}+\uu{\dive}(\uu{U}\otimes\uu{U})+\grd \pi =0, \\
    \uu{U}(0,\cdot)=\uu{U}_0, \ \dive \uu{U}_{0} =0, 
\end{gather}
for $(t,\uu{x})\in Q$, which is formally the incompressible limit of
\eqref{eq:cons_mas}-\eqref{eq:cons_mom}. 

\subsection{Velocity Stabilisation}
\label{sec:stab}

In the following, we introduce a semi-implicit scheme for the system
\eqref{eq:cons_mas}-\eqref{eq:cons_mom} and study its stability in the
sense of numerical control of total energy. In other words, we aim at
obtaining a discrete equivalent of the energy stability
\eqref{eq:eng_id}. In order to enforce the energy stability of the
numerical solution, we adopt the formalism introduced in \cite{DVB17,
  DVB20, GVV13,PV16}. The idea is to apply a stabilisation, therein a
shifted velocity is introduced in the mass and momentum fluxes to
yield the modified system: 
\begin{align}
  \D_t\rho^\veps+\dive (\rho^\veps(\uu{u}^\veps-\delta\uu{u}^\veps))&=0, \label{eq:r_cons_mas}
  \\ 
  \D_t(\rho^\veps\uu{u}^\veps)+\bdive
  (\rho^\veps\uu{u}^\veps\otimes(\uu{u}^\veps-\delta\uu{u}^\veps))+\frac{1}{\veps^2}\bgrd
  p^\veps &=0. \label{eq:r_cons_mom}
\end{align}
Analogous to Proposition~\ref{prop:engy_balance}, one can derive the
following apriori estimates for the solutions of the modified system
\eqref{eq:r_cons_mas}-\eqref{eq:r_cons_mom}. The expression for the
stabilisation term $\delta\uu{u}^\veps$ is determined accordingly from
the total energy estimate so that it ensures the entropy stability. 
\begin{proposition}
  \label{prop:r_engy_balance}
  Regular solutions of \eqref{eq:r_cons_mas}-\eqref{eq:r_cons_mom} satisfy the
  following identities. 
  \begin{enumerate}[label=(\roman*)]
  \item A renormalisation identity:
    \begin{equation}
      \label{eq:r_renorm}
      \Dt\psi_\gamma(\rho^\veps)
      +\dive\left(\psi_\gamma(\rho^\veps)(\uu{u}^\veps-\delta\uu{u}^\veps)\right)+p^\veps\dive(\uu{u}^\veps-\delta\uu{u}^\veps)=0.
    \end{equation}
  \item A positive renormalisation identity:
    \begin{equation}
      \label{eq:r_porenorm}
     \Dt\Pi_\gamma(\rho^\veps)
      +\dive\big(\psi_\gamma(\rho^\veps)-\psi_\gamma^\prime(1)\rho^\veps\big)(\uu{u}^\veps-\delta\uu{u}^\veps)+p^\veps\dive(\uu{u^\veps}-\delta\uu{u}^\veps)=0.
    \end{equation}
  \item The kinetic energy balance:
    \begin{align}
      \label{eq:r_kinbal}
      \Dt\Big(\frac{1}{2}\rho^\veps{\abs{\uu{u}^\veps}}^2\Big)
      +\dive\Big(\frac{1}{2}\rho^\veps{\abs{\uu{u}^\veps}}^2(\uu{u}^\veps-\delta\uu{u}^\veps)\Big)
      +\frac{1}{\veps^2}(\uu{u}^\veps-\delta\uu{u}^\veps)\cdot\grd p^\veps=-\frac{1}{\veps^2}\delta\uu{u}^\veps\cdot\grd p^\veps.
    \end{align}
  \item Adding \eqref{eq:r_porenorm} and \eqref{eq:r_kinbal} yields the entropy balance:
  \begin{align}
    \label{eq:r_eng_id}
    \Dt\Big(\frac{1}{\veps^2}\Pi_\gm(\rho^\veps)+\frac{1}{2}\rho^\veps{\abs{\uu{u}^\veps}}^2\Big)+\dive\Big(\frac{1}{2}\rho^\veps{\abs{\uu{u}^\veps}}^2+\frac{1}{\veps^2}\psi_\gamma(\rho^\veps)-\frac{1}{\veps^2}\psi_\gamma^\prime(1)\rho^\veps+\frac{1}{\veps^2}p^\veps\Big)(\uu{u}^\veps-\delta\uu{u}^\veps)=-\frac{1}{\veps^2}\delta\uu{u}^\veps\cdot\grd p^\veps. 
  \end{align}
  \end{enumerate}
\end{proposition}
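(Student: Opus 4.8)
The plan is to mirror the computation behind Proposition~\ref{prop:engy_balance}, treating the shifted field $\uu{u}^\veps-\delta\uu{u}^\veps$ as the effective advective velocity throughout; the only structural novelty is that the pressure gradient in \eqref{eq:r_cons_mom} remains paired with the \emph{true} velocity $\uu{u}^\veps$, which is what ultimately produces the source term on the right-hand sides of \eqref{eq:r_kinbal} and \eqref{eq:r_eng_id}. The one algebraic ingredient I would isolate first is the thermodynamic relation $p(\rho)=\rho\psi_\gamma^\prime(\rho)-\psi_\gamma(\rho)$, valid for both $\gamma=1$ and $\gamma>1$; this is the identity that converts the $\dive(\rho^\veps(\uu{u}^\veps-\delta\uu{u}^\veps))$ terms into the $p^\veps\dive(\uu{u}^\veps-\delta\uu{u}^\veps)$ contributions.

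For \eqref{eq:r_renorm} I would apply the chain rule $\Dt\psi_\gamma(\rho^\veps)=\psi_\gamma^\prime(\rho^\veps)\Dt\rho^\veps$, substitute the mass balance \eqref{eq:r_cons_mas} to replace $\Dt\rho^\veps$ by $-\dive(\rho^\veps(\uu{u}^\veps-\delta\uu{u}^\veps))$, expand by the product rule, and regroup using $p^\veps=\rho^\veps\psi_\gamma^\prime(\rho^\veps)-\psi_\gamma(\rho^\veps)$; the combination collapses exactly to \eqref{eq:r_renorm}. Identity \eqref{eq:r_porenorm} then follows immediately, since $\Pi_\gamma$ differs from $\psi_\gamma$ by the affine term $-\psi_\gamma(1)-\psi_\gamma^\prime(1)(\rho^\veps-1)$, whose time derivative is $-\psi_\gamma^\prime(1)\Dt\rho^\veps$; subtracting $\psi_\gamma^\prime(1)$ times \eqref{eq:r_cons_mas} from \eqref{eq:r_renorm} absorbs this correction into the flux, replacing $\psi_\gamma(\rho^\veps)$ by $\psi_\gamma(\rho^\veps)-\psi_\gamma^\prime(1)\rho^\veps$ while leaving the pressure term untouched.

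The substantive step is the kinetic energy balance \eqref{eq:r_kinbal}, which I would obtain by taking the scalar product of the momentum equation \eqref{eq:r_cons_mom} with $\uu{u}^\veps$. Using $\uu{u}^\veps\cdot\Dt(\rho^\veps\uu{u}^\veps)=\Dt(\half\rho^\veps\absq{\uu{u}^\veps})+\half\absq{\uu{u}^\veps}\Dt\rho^\veps$ and expanding the convective term $\uu{u}^\veps\cdot\bdive(\rho^\veps\uu{u}^\veps\otimes(\uu{u}^\veps-\delta\uu{u}^\veps))$ by the product rule into $\half\absq{\uu{u}^\veps}\dive(\rho^\veps(\uu{u}^\veps-\delta\uu{u}^\veps))+\dive(\half\rho^\veps\absq{\uu{u}^\veps}(\uu{u}^\veps-\delta\uu{u}^\veps))$, the two contributions proportional to $\half\absq{\uu{u}^\veps}$ sum to zero by the mass balance \eqref{eq:r_cons_mas}. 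This leaves a transport identity in which the pressure still appears as $\frac{1}{\veps^2}\uu{u}^\veps\cdot\grd p^\veps$. The decisive manoeuvre is then the splitting $\uu{u}^\veps=(\uu{u}^\veps-\delta\uu{u}^\veps)+\delta\uu{u}^\veps$ in this term: the first piece remains on the left, reconstituting the advective flux with the shifted velocity, while the second piece $\frac{1}{\veps^2}\delta\uu{u}^\veps\cdot\grd p^\veps$ is transferred to the right, giving precisely \eqref{eq:r_kinbal}.

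Finally, \eqref{eq:r_eng_id} is assembled by adding $\veps^{-2}$ times \eqref{eq:r_porenorm} to \eqref{eq:r_kinbal}. The only point requiring care is the pressure contribution: the term $\frac{1}{\veps^2}p^\veps\dive(\uu{u}^\veps-\delta\uu{u}^\veps)$ from \eqref{eq:r_porenorm} and the term $\frac{1}{\veps^2}(\uu{u}^\veps-\delta\uu{u}^\veps)\cdot\grd p^\veps$ from \eqref{eq:r_kinbal} recombine, by the product rule, into the single divergence $\frac{1}{\veps^2}\dive(p^\veps(\uu{u}^\veps-\delta\uu{u}^\veps))$, which merges with the remaining fluxes to form the bracketed total-energy flux, while the right-hand side $-\frac{1}{\veps^2}\delta\uu{u}^\veps\cdot\grd p^\veps$ is inherited unchanged. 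I do not anticipate a genuine obstacle, since every step is an identity for smooth solutions; the only thing to watch is the sign bookkeeping in the pressure-work splitting, as it is exactly this residual $-\frac{1}{\veps^2}\delta\uu{u}^\veps\cdot\grd p^\veps$ that must subsequently be rendered non-positive through the choice of $\delta\uu{u}^\veps$ proportional to $\grd p^\veps$ to secure the entropy dissipation.
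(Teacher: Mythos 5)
Your proposal is correct, and it is essentially the paper's own argument: the paper simply declares the proof "classical" via "straightforward manipulations as in Proposition~\ref{prop:engy_balance}", and your computation executes exactly those manipulations (chain rule plus the mass equation and the relation $p=\rho\psi_\gamma^\prime(\rho)-\psi_\gamma(\rho)$ for the renormalisation identities, the dot product of the momentum equation with $\uu{u}^\veps$ for the kinetic energy balance, and the splitting $\uu{u}^\veps=(\uu{u}^\veps-\delta\uu{u}^\veps)+\delta\uu{u}^\veps$ in the pressure-work term to isolate the dissipation residual). All steps, including the merging of $p^\veps\dive(\uu{u}^\veps-\delta\uu{u}^\veps)$ and $(\uu{u}^\veps-\delta\uu{u}^\veps)\cdot\grd p^\veps$ into a single divergence for the total entropy balance, check out.
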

\begin{proof}
    The proof is classical and uses straightforward manipulations as in Proposition~\ref{prop:engy_balance}.
\end{proof}
Thus, at the continuous level, we immediately see that if we formally take $\delta\uu{u}^\veps=\eta\grd p^\veps$ with $\eta>0$, then we get the entropy stability inequality:
\begin{equation}
    \label{eq:r_eng_id_stab}
    \Dt\Big(\frac{1}{\veps^2}\Pi_\gm(\rho^\veps)+\frac{1}{2}\rho^\veps{\abs{\uu{u}^\veps}}^2\Big)+\dive\Big(\frac{1}{2}\rho^\veps{\abs{\uu{u}^\veps}}^2+\frac{1}{\veps^2}\psi_\gamma(\rho^\veps)-\frac{1}{\veps^2}\psi_\gamma^\prime(1)\rho^\veps+\frac{1}{\veps^2}p^\veps\Big)(\uu{u}^\veps-\delta\uu{u}^\veps)=-\frac{1}{\veps^2}\eta\abs{\grd p^\veps}^2 \leq 0. 
\end{equation}
In other words, shifting the velocity has a stabilising effect, which ultimately yields the energy stability; see \cite{DVB17, DVB20, GVV13,PV16}.  Consequently, as $\veps$ tends to zero, the modified system \eqref{eq:r_cons_mas}-\eqref{eq:r_cons_mom} formally converges to the following velocity stabilised incompressible Euler equations:
\begin{gather}
    \dive(\uu{u}-\delta\uu{u})=0,\label{eq:stab_incomp_eul_divfree}\\
    \D_t\uu{u}+\bdive(\uu{u}\otimes(\uu{u}-\delta\uu{u}))+\bgrd\pi=0,\label{eq:stab_incomp_eul_pres}
\end{gather}
where $\pi$ is the formal limit of $\frac{p(\rho^\veps)-1}{\veps^2}$ and $\delta\uu{u}=\eta\bgrd\pi$.
Motivated by the above, we design a semi-implicit scheme in which the
velocity stabilisation introduced via numerical flux functions is the
key to achieve nonlinear energy stability. Using the energy estimate,
we establish a discrete analogue of the incompressible limit. 

\section{Domain Discretisation and Discrete Differential Operators}
\label{sec:MAC_disc_diff}
In order to approximate the velocity stabilised Euler system
\eqref{eq:r_cons_mas}-\eqref{eq:r_cons_mom} in a finite volume framework, we take a computational space-domain $\Omega\subseteq \mbb{R}^d$, such that the closure of $\Omega$ is the union of closed rectangles ($d=2$) or closed orthogonal parallelepipeds ($d=3$).
\subsection{Mesh and Unknowns}
\label{subsec:msh_unkn}
In the subsequent discussion, we introduce a discretisation of the domain $\Omega$ using a marker and cell (MAC) grid and the corresponding discrete function spaces; see \cite{Cia91, GHL+18, GHM+16, HW65} for more details. 
A MAC grid is a pair $\mcal{T}=(\mcal{M},\mcal{E})$, where $\mcal{M}$ is called the primal mesh which is a partition of $\bar{\Omega}$ consisting of possibly non-uniform closed rectangles ($d=2$) or parallelepipeds ($d=3$) and $\mcal{E}$ is the collection of all edges of the primal cells. For each $\sigma\in\mcal{E}$, we construct a dual cell $\Ds$ which is the union of half-portions of the primal cells $K$ and $L$, where $\s=\bar{K}\cap\bar{L}$. Furthermore, we decompose $\E$ as $\mcal{E}=\cup_{i=1}^d\mcal{E}^{(i)}$, where $\mcal{E}^{(i)}=\mcal{E}_\intr^{(i)}\cup\mcal{E}_\extr^{(i)}$. Here, $\mcal{E}_\intr^{(i)}$ and $\mcal{E}_\extr^{(i)}$ are, respectively, the collection of $d-1$ dimensional internal and external edges that are orthogonal to the $i$-th unit vector $e^{(i)}$ of the canonical basis of $\mbb{R}^d$. We denote by $\mcal{E}(K)$, the collection of all edges of $K\in\mcal{M}$ and $\tilde{\mcal{E}}(D_\sigma)$, the collection of all edges of the dual cell $D_\sigma$. We define the mesh parameter 
$h = \max\{\diam(K): K\in\M \}$.

Now, we define a discrete function space $L_{\mcal{M}}(\Omega) \subset L^{\infty}(\Omega)$, consisting of scalar valued functions which are piecewise constant on each primal cell $K\in\mcal{M}$. Analogously, we denote by $\uu{H}_{\mcal{E}}(\Omega)=\prod_{i=1}^{d} H^{(i)}_{\mcal{E}}(\Omega)$, the set of vector valued (in $\mbb{R}^d$) functions which are constant on each dual cell $D_\sigma$ and for each $i=1,2,\dots,d$. The space of vector valued functions vanishing on the external edges is denoted as  $\uu{H}_{\mcal{E},0}(\Omega)=\prod_{i=1}^d H^{(i)}_{\mcal{E},0}(\Omega)$, where  $H^{(i)}_{\mcal{E},0}(\Omega)$ contains those elements of $H^{(i)}_{\mcal{E}}(\Omega)$ which vanish on the external edges. For a primal grid function $q\in L_{\mcal{M}}(\Omega)$, such that $q =\sum_{K\in\mcal{M}}q_K\chark$, and for each $\sigma = K|L \in\cup_{i=1}^d\mcal{E}^{(i)}_\intr$, the dual average $q_{D_\sigma}$ of $q$ over $D_\sigma$ is defined via the relation
  \begin{equation}
   \label{eq:mass_dual}
     \abs{D_\sigma}q_{D_\sigma}=\abs{D_{\sigma,K}}q_K+\abs{D_{\sigma,L}}q_L.
  \end{equation}
\begin{remark}
The assumption that the dual variables vanish at the boundary is taken only for the sake of simplicity of the following exposition. In a similar setup, we can consider the space of piecewise constant functions on the dual grid, which can be extended to external dual cells in order to implement periodic boundary conditions. In this case, an external dual cell is obtained by adjoining half portions of a primal cell and a fictitious primal cell outside the domain. Furthermore, the function values on opposite external dual cells are identified. We have implemented periodic boundary conditions in all of the numerical test problems in Section \ref{sec:num_res}.
\end{remark}

\subsection{Discrete Convection Fluxes and Differential Operators}
\label{sec:dic_convect}
In this section, we introduce the discrete convection fluxes and
discrete differential operators on the functional spaces described
above.  
\begin{definition}\label{def:disc_conv_flux}
  Assume a discretisation of $\Omega$ with a MAC grid
  $\mcal{T}=(\mcal{M},\mcal{E})$ and the discrete function spaces as
  defined above.  
  \begin{enumerate}[label=(\roman*)]
  \item For each $K \in \mcal{M}$ and $\sigma\in \mcal{E}(K)$, $\sigma
    = K|L$, the mass flux $F_{\sigma,K} \colon  L_{\mcal{M}}(\Omega)
    \times \uu{H}_{\mcal{E},0}(\Omega) \to \mbb{R}$ is defined by the
    following splitting of the positive and negative components: 
    \begin{equation}
      \label{eq:mass_flux}
      F_{\sigma,K}(\rho,\uu{v}):=\abs{\sigma}\big\{\rho_{K}(v_{\sigma,K})^{+}+\rho_{L}(v_{\sigma,K})^{-}\big\},
      \ (\rho,\uu{v})\in L_{\mcal{M}}(\Omega) \times
      \uu{H}_{\mcal{E},0}(\Omega).
    \end{equation}
    Here, $v_{\sigma,K}=v_{\sigma}\uu{e}^{(i)}\cdot\uu{\nu}_{\sigma, K}$, where
    $\uu{\nu}_{\sigma,K}$ is the unit vector normal to the edge
    $\sigma\in\mcal{E}^{(i)}_\intr$ in the direction outward to the
    cell $K$. Note that in Subsection~\ref{sec:stab}, it is the
    velocity stabilisation which yields the energy stability of the
    modified Euler system
    \eqref{eq:r_cons_mas}-\eqref{eq:r_cons_mom}. Motivated by the same
    considerations, we introduce the following stabilisation in the
    discrete setup:   
\begin{equation}
\label{eq:reg_vel}
v_\s^{n}=u_\s^{n}-\delta u_\s(\rho^{n+1}), \ \text{with} \ \delta u_\s(\rho^{n+1}):=\frac{\eta\dt}{\veps^2}(\partial_{\E}^{(i)}p^{n+1})_\s, \ \forall \ 1 \leq i \leq d, \  \forall \s\in\mcal{E}_\intr^{(i)},
\end{equation}
where $\eta>0$ is a parameter which will be determined later. Clearly,
the stabilised velocity $\uu{v}_\s\in \uu{H}_{\E,0}(\Omega)$ since
both the explicit velocity component and the pressure gradient are
defined on the dual cells. It will be shown that the implicit
treatment of the stabilising pressure gradient term is crucial to get
the entropy stability of the numerical scheme. In order to maintain
the sign of the split velocities $(v_{\s,K})^\pm$ and to bring in an
upwind-bias in the mass flux $F_{\s,K}$, we define 
\begin{equation}
    \label{eq:v_sigKpm}
    (v_{\s,K}^n)^+:=u_{\s,K}^{n,+}-\delta u_{\s,K}(\rho^{n+1})^{-}\geq 0, \ 
    (v_{\s,K}^n)^-:=u_{\s,K}^{n,-}-\delta u_{\s,K}(\rho^{n+1})^{+} \leq 0.
\end{equation}
We have denoted by $a^\pm=\half(a \pm \abs{a})$, that splits a real number $a$ in positive and negative valued parts respectively . 
  \item For a fixed $i=1,2,\dots,d$, for each $\sigma\in\mcal{E}^{(i)}, \epsilon\in\tilde{\E}({D_\sigma})$
    and $(\rho,\uu{v},u)\in \Lm\times \uu{H}_{\mcal{E},0}\times H^{(i)}_{\mcal{E},0}$, the net upwind momentum convection flux through the edges of the dual cell $D_\sigma$ is given by the expression
    \label{mom_flux_up} 
    \begin{align}
      \sum_{\epsilon\in\Eds}\fesig(\rho,\uu{v})u_{\epsilon,\mathrm{up}}, 
    \end{align}
    where $\fesig(\rho,\uu{v})$ is the mass flux across the edge
    $\epsilon$ of the dual cell $\Ds$. We follow the convention that
    $\fesig=0$ if $\epsilon\in\mcal{E}_\extr$, otherwise it is defined
    as a suitable linear combination of the primal mass convection
    fluxes through the neighbouring primal edges with constant
    coefficients; see \cite{GHL+18a} for details.  
  \item The following upwind choice is used for obtaining
    $u_{\epsilon,\mathrm{up}}$: 
    \label{eq:mom_up}
    \begin{equation}
    u_{\epsilon,\mathrm{up}}=
    \begin{cases}
        u_{\sigma}, & \mbox{if} \ \fesig(\rho,\uu{v})\geq 0,\\
        u_{\sigma^{\prime}}, & \mbox{otherwise}, 
    \end{cases}
\end{equation}
where $\epsilon\in\Eds$, $\epsilon=\Ds|D_{\sigma^{\prime}}$.
\end{enumerate}
\end{definition}
\begin{definition}[Discrete gradient and discrete divergence]
\label{def:disc_grad_div}
    The discrete gradient operator  $\bgrd_{\mcal{E}}:L_{\mcal{M}}(\Omega)\rightarrow\uu{H}_{\mcal{E},0}(\Omega)$ is defined by the map $q \mapsto \bgrd_{\mcal{E}}q=\Big(\D^{(1)}_{\mcal{E}}q,\D^{(2)}_{\mcal{E}}q,\dots,\D^{(d)}_{\mcal{E}}q\Big)$, where for each $i=1,2,\dots,d$, $\partial^{(i)}_{\mcal {E}}q$ denotes
\begin{equation}
\partial^{(i)}_{\mcal {E}}q=\sum_{\sigma\in \mcal{E}^{(i)}_\intr}(\partial^{(i)}_{\mcal{E}}q)_{\sigma}\mcal{X}_{D_{\sigma}}, \ \mbox{with} \  (\partial^{(i)}_{\mcal{E}}q)_{\sigma}= \frac{\abs{\sigma}}{\abs{D_\sigma}}(q_L-q_K)\uu{e}^{(i)}\cdot \uu{\nu}_{\sigma,K}, \; \sigma=K|L\in \mcal{E}^{(i)}_\intr.
\end{equation}
We set the gradient to zero on the external faces. The discrete divergence operator $\divM:\uu{H}_{\E,0}(\Omega)\rightarrow L_{\mcal{M}}(\Omega)$ is defined as $\uu{v} \mapsto \divM \uu{v}=\sum_{K\in\M}(\dive_{\mcal{M}} \uu{v})_K \mcal{X}_{K}$, where for each $K\in\M$, $(\dive_{\mcal{M}} \uu{v})_K $ denotes
\label{def:disc_div}
\begin{equation}
(\divM \uu{v})_K =\frac{1}{\abs{K}}\sum_{\sigma\in\mcal{E}(K)}\abs{\sigma} v_{\sigma,K}.
\end{equation}
The above discrete operators satisfy the following
`gradient-divergence duality'; see \cite{GHL+18a} for further details.  
\end{definition}
\begin{proposition}
  For any $(q,\uu{v})\in\Lm\times\Hez$, the gradient-divergence duality is given by
  \begin{equation}
    \label{eq:disc_dual}
    \int_{\Omega}q(\divM \uu{v})\dd\uu{x}+\int_{\Omega}\bgrd_{\mcal{E}}q\cdot\uu{v}\dd\uu{x}=0.
  \end{equation}
\end{proposition}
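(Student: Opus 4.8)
The plan is to establish \eqref{eq:disc_dual} as a discrete summation-by-parts identity, by reducing both integrals to finite sums over the mesh and then reorganising them over the set of internal edges. Since $q$ and $\divM\uu{v}$ both belong to $\Lm$ and are therefore piecewise constant on the primal cells, the first integral collapses, upon inserting the definition of the discrete divergence, to $\int_\Omega q(\divM\uu{v})\dd\uu{x}=\sum_{K\in\M}\volk\,q_K(\divM\uu{v})_K=\sum_{K\in\M}q_K\sum_{\sigma\in\Ek}\vols\,v_{\sigma,K}$. The aim is then to recast this cell-indexed sum as a sum indexed by edges.

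For the gradient term, I would first expand the Euclidean inner product componentwise as $\int_\Omega\bgrd_{\mcal{E}}q\cdot\uu{v}\dd\uu{x}=\sum_{i=1}^d\int_\Omega(\D^{(i)}_{\mcal{E}}q)\,v^{(i)}\dd\uu{x}$, where $v^{(i)}$ denotes the $i$-th component of $\uu{v}$. Because, for each $i$, the component $\D^{(i)}_{\mcal{E}}q$ and the matching component of $\uu{v}$ are constant on each dual cell $\Ds$ with $\sigma\in\mcal{E}^{(i)}_\intr$, each such integral reduces to $\sum_{\sigma\in\mcal{E}^{(i)}_\intr}\volds\,(\D^{(i)}_{\mcal{E}}q)_\sigma\,v_\sigma$. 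Substituting $(\D^{(i)}_{\mcal{E}}q)_\sigma=\tfrac{\vols}{\volds}(q_L-q_K)\,\uu{e}^{(i)}\cdot\uu{\nu}_{\sigma,K}$ cancels the factor $\volds$, and recalling $v_{\sigma,K}=v_\sigma\,\uu{e}^{(i)}\cdot\uu{\nu}_{\sigma,K}$ gives $\int_\Omega\bgrd_{\mcal{E}}q\cdot\uu{v}\dd\uu{x}=\sum_{\sigma\in\mcal{E}_\intr}\vols\,(q_L-q_K)\,v_{\sigma,K}$, the external edges dropping out since the gradient is set to zero there.

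Finally, I would reorganise the divergence sum edge by edge: every internal edge $\sigma=K|L$ is counted once from $K$ and once from $L$, and since the outward normals obey $\uu{\nu}_{\sigma,L}=-\uu{\nu}_{\sigma,K}$ we have $v_{\sigma,L}=-v_{\sigma,K}$, so the net contribution of $\sigma$ is $\vols\,(q_K-q_L)\,v_{\sigma,K}$; the external edges contribute nothing because $\uu{v}\in\Hez$ vanishes on them. Hence $\int_\Omega q(\divM\uu{v})\dd\uu{x}=\sum_{\sigma\in\mcal{E}_\intr}\vols\,(q_K-q_L)\,v_{\sigma,K}$, which is exactly the negative of the gradient term computed above, yielding \eqref{eq:disc_dual}. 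The only real work here is the bookkeeping: carefully tracking signs through the outward normals, respecting the MAC staggering that assigns the $i$-th velocity component to the edges of $\mcal{E}^{(i)}$, and confirming that all external/boundary contributions vanish. The underlying mechanism is simply the discrete divergence theorem applied on each primal cell.
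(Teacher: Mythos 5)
Your proof is correct. Note that the paper itself does not prove this proposition at all---it simply defers to the reference \cite{GHL+18a}---and your summation-by-parts argument (reducing both integrals to sums, rewriting the divergence sum edge-by-edge using $v_{\sigma,L}=-v_{\sigma,K}$, and cancelling against the gradient sum, with boundary terms vanishing because $\uu{v}\in\Hez$ and the gradient is set to zero on external faces) is precisely the standard argument that the cited reference supplies, so you have filled in the omitted details along essentially the intended lines.
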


In the following, we state the so-called `inf-sup stability' satisfied by the MAC grid discretisation introduced above. It is well known from the literature, e.g.\ \cite{EG04}, that this condition is crucial in establishing the well-posedness of finite element formulations. As mentioned in the introduction, the inf-sup stability is the key to maintaining the stability of the elliptic problem satisfied by the pressure in $L^2$-norm. Along the lines of \cite{HLS21}, we further deduce that this bound subsequently allows the passage to the incompressible limit of the discrete pressure gradient term. The inf-sup stability can be stated as follows. 
\begin{lemma}[Inf-sup Stability]
\label{lem:inf_sup}
There exists a constant $\beta>0$, depending only on $\Omega$ and the discretisation, such that for all $p =\{p_{K}\colon K\in\mcal{M}\}$, there exists $\uu{u}=\{u_{\s,i} \colon \s\in\mcal{E}^{(i)}, 1\leq i\leq d\}$ in $\uu{H}_{\mcal{E},0}$ satisfying  
\begin{equation}
    \label{eq:inf-sup}
    \norm{\uu{u}}_{1,\mcal{T}}=1 \ \text{and} \  \sum_{K\in\mcal{M}}|K|p_{K}(\dive\uu{u})_{K}\geq\beta\norm{p-m(p)}_{L^{2}(\Omega)}.
\end{equation}
Here, $\norm{.}_{1,\mcal{T}}$ is the discrete $H_{0}^{1}$ norm and $m(p)=\displaystyle\frac{1}{\abs{\Omega}}\int_{\Omega}p\dd\uu{x}$; see, e.g.\ \cite{GHL12,GHL+18,SS97}, for more details. 
\end{lemma}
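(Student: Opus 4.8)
The plan is to transfer the classical continuous inf-sup (Ladyzhenskaya--Babu\v{s}ka--Brezzi) condition to the discrete MAC setting by means of a Fortin interpolation operator. The starting point is the standard surjectivity of the divergence on a bounded connected Lipschitz domain (see, e.g.\ \cite{EG04}): the divergence maps $H^1_0(\Omega)^d$ onto $L^2_0(\Omega):=\{q\in L^2(\Omega)\colon \int_\Omega q\,\dd\uu{x}=0\}$, and there is a constant $C_\Omega>0$ depending only on $\Omega$ such that every $q\in L^2_0(\Omega)$ admits a lifting $\uu{v}\in H^1_0(\Omega)^d$ with $\dive\uu{v}=q$ and $\norm{\uu{v}}_{H^1_0(\Omega)^d}\leq C_\Omega\norm{q}_{L^2(\Omega)}$. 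First I would set $q:=p-m(p)\in L^2_0(\Omega)$ and fix such a lifting $\uu{v}$.

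Next I would introduce the Fortin operator $\Pi_{\E}\colon H^1_0(\Omega)^d\to\Hez$ defined component-wise by the face averages
\[
(\Pi_{\E}\uu{v})_\sigma=\frac{1}{\abs{\sigma}}\int_\sigma v_i\,\dd\gamma,\qquad \sigma\in\mcal{E}^{(i)},\ 1\leq i\leq d,
\]
which lands in $\Hez$ because the trace of $\uu{v}$ vanishes on $\partial\Omega$, so the averages over external edges vanish. Its commuting (divergence-preserving) property is the easy ingredient: since $\abs{\sigma}(\Pi_{\E}\uu{v})_{\sigma,K}=\int_\sigma\uu{v}\cdot\uu{\nu}_{\sigma,K}\,\dd\gamma$ for $\sigma\in\mcal{E}^{(i)}$, the definition of $\divM$ and the divergence theorem give $(\divM\Pi_{\E}\uu{v})_K=\frac{1}{\abs{K}}\int_K\dive\uu{v}\,\dd\uu{x}$, i.e.\ the discrete divergence of $\Pi_{\E}\uu{v}$ is the cell average of $\dive\uu{v}$.

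I would then assemble the estimate. Using the commuting property, $\dive\uu{v}=p-m(p)$, and $\int_\Omega\dive\uu{v}\,\dd\uu{x}=0$ (which removes the mean value $m(p)$), I obtain
\[
\sum_{K\in\M}\abs{K}\,p_K\,(\divM\Pi_{\E}\uu{v})_K
=\int_\Omega p\,\dive\uu{v}\,\dd\uu{x}
=\norm{p-m(p)}_{L^2(\Omega)}^2 .
\]
Combining this with the Fortin stability bound $\norm{\Pi_{\E}\uu{v}}_{1,\mcal{T}}\leq C_F\norm{\uu{v}}_{H^1_0(\Omega)^d}\leq C_F C_\Omega\norm{p-m(p)}_{L^2(\Omega)}$, the normalised field $\uu{u}:=\Pi_{\E}\uu{v}/\norm{\Pi_{\E}\uu{v}}_{1,\mcal{T}}$ satisfies $\norm{\uu{u}}_{1,\mcal{T}}=1$ and
\[
\sum_{K\in\M}\abs{K}\,p_K\,(\divM\uu{u})_K
=\frac{\norm{p-m(p)}_{L^2(\Omega)}^2}{\norm{\Pi_{\E}\uu{v}}_{1,\mcal{T}}}
\geq\frac{1}{C_F C_\Omega}\,\norm{p-m(p)}_{L^2(\Omega)},
\]
so $\beta:=(C_F C_\Omega)^{-1}$ is admissible. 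The degenerate case $p=m(p)$ is handled separately and trivially: the right-hand side vanishes and any unit-norm $\uu{u}$ satisfies the inequality.

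The hard part will be the Fortin stability estimate $\norm{\Pi_{\E}\uu{v}}_{1,\mcal{T}}\leq C_F\norm{\uu{v}}_{H^1_0(\Omega)^d}$ with $C_F$ independent of $p$ and controlled solely by the mesh. This amounts to bounding the discrete $H^1_0$-seminorm of the face-averaged interpolant --- a sum of squared finite differences of the averages across neighbouring cells --- by the continuous $H^1$-seminorm of $\uu{v}$, using trace and Poincar\'e-type inequalities on the primal and dual cells together with a mesh-regularity argument to absorb the (possibly non-uniform) cell aspect ratios. This is precisely the step where the dependence of $\beta$ on the discretisation enters, and it is established for MAC grids in the references already cited (see \cite{GHL12,GHL+18,SS97}).
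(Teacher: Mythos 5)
The paper does not actually prove this lemma at all---it simply cites \cite{GHL12,GHL+18,SS97}---and your Fortin-operator argument (continuous lifting of $p-m(p)$ through the surjectivity of the divergence on $H^1_0(\Omega)^d$, face-average interpolant commuting with the discrete MAC divergence, then normalisation) is precisely the standard proof contained in those references. Your proposal is therefore correct and takes essentially the route the paper delegates to its citations; the single step you defer, namely the discrete $H^1$-stability of the face-average Fortin operator with a constant depending only on the mesh regularity, is exactly the content of the cited work, so nothing essential is missing relative to what the paper itself relies on.
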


\section{An Energy Stable Semi-implicit Upwind Scheme}
\label{sec:upwind_scheme}

In the following, we introduce our semi-implicit in time, upwind in space, fully-discrete scheme for the Euler system \eqref{eq:r_cons_mas}-\eqref{eq:r_cons_mom}.
\subsection{The scheme}
\label{sec:scheme}
Let us consider a discretisation $0=t^0<t^1<\cdots<t^N=T$ of the time-interval $(0,T)$ and let $\dt=t^{n+1}-t^n$, for $n=0,1,\dots,N-1$, be the constant time-step. We consider the following fully-discrete scheme for $0\leq n\leq{N-1}$:
\begin{gather}
    \frac{1}{\dt}\big(\rho_{K}^{n+1}-\rho_{K}^{n}\big)+\frac{1}{\left|K\right|}\sum_{\s\in\E(K)}F_{\sigma,K}(\rho^{n+1},\uu{v}^{n})=0, \ \forall K\in \M,\label{eq:dis_cons_mas}\\ 
    \frac{1}{\dt}\big(\rho_{\Ds}^{n+1}u_\s^{n+1}-\rho_{\Ds}^{n}u_\s^{n}\big)+\frac{1}{\left|\Ds\right|}\sum_{\epsilon\in\tilde{\E}(\Ds)}F_{\epsilon,\sigma}(\rho^{n+1},\uu{v}^{n})u_{\eps,\mathrm{up}}^{n}+\frac{1}{\veps^2}(\partial^{(i)}_{\E}p^{n+1})_{\s}=0, \ \forall \ 1\leq i\leq d, \ \forall \s\in\E_\intr^{(i)}. \label{eq:dis_cons_mom}
\end{gather}
The mass balance equation \eqref{eq:dis_cons_mas} is defined on primal
mesh cells, whereas the momentum balance \eqref{eq:dis_cons_mom} is on
the dual cells. To derive the energy stability, we recall the
following mass balance on the dual cell $\Ds$ \cite{GHL+18a}: 
\begin{equation}
    \label{eq:dis_cons_mass_dual}
    \frac{1}{\dt}(\rho_{\Ds}^{n+1}-\rho_{\Ds}^{n})+\frac{1}{\left|\Ds\right|}\sum_{\epsilon\in\tilde\E(\Ds)}F_{\epsilon,\sigma}(\rho^{n+1},\uu{v}^{n})=0. 
\end{equation}
The discrete momentum update \eqref{eq:dis_cons_mom} and the dual mass
balance \eqref{eq:dis_cons_mass_dual} together yield the following
update of the velocity components: 
\begin{equation}
    \label{eq:dis_vel_dual}
    \frac{u_\s^{n+1}-u_\s^n}{\dt}+\frac{1}{|\Ds|}\sum_{\epsilon\in\tilde\E(\Ds)}F_{\epsilon,\s}(\rho^{n+1},\uu{v}^{n})^-\frac{u_{\s^{\prime}}^{n}-u_\s^n}{\rho_{\Ds}^{n+1}}+\frac{1}{\veps^2}\frac{(\partial_{\E}^{(i)}p^{n+1})_{\s}}{\rho_{\Ds}^{n+1}}=0.
\end{equation}

\subsubsection{Initialisation of the Scheme}
\label{sec:initialise}
We take initial approximations for $\rho$ and $\uu{u}$ as the averages of the initial conditions $\rho_{0}$ and $\uu{u}_{0}$ on primal and dual cells, respectively, to get the initial values:
\begin{equation}
\label{eq:dis_ic}
\begin{aligned}
    \rho_{K}^{0}&=\frac{1}{|K|}\int_{K}\rho_{0}(\uu{x})\dd\uu{x}, \  \forall K\in\M,\\
    u_{\s,i}^{0}&=\frac{1}{|\Ds|}\int_{\Ds}(\uu{u}_{0}(\uu{x}))_{i}\dd\uu{x}, \ \text{for} \ 1\leq i\leq d, \ \forall \s\in\E_\intr^{(i)}.
\end{aligned}
\end{equation}

In the following lemma, we observe an immediate implication of the ill-prepared initial data from Definition~\ref{def:ill_prep_id} in the light of the above discretisation.
\begin{lemma}
\label{lem:dis_ill_prep_est}
Let the initial condition $(\rho^\veps_0, \uu{u}^\veps_0)$ be ill-prepared in the sense of Definition \ref{def:ill_prep_id}. Then there exists a constant $C>0$, independent of $\veps$, such that
\begin{equation}
\label{eq:dis_ill_prep_est}
    \frac{1}{\veps}\max_{K\in\mcal{M}}\abs{(\rho^\veps)^0_K - 1} \leq C.
\end{equation}
\end{lemma}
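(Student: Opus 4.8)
The plan is to read off the estimate directly from the definition of the discrete initial density as a cell average, given in \eqref{eq:dis_ic}, combined with the uniform-in-$\veps$ control of $\rho_0^\veps - 1$ supplied by the ill-prepared assumption \eqref{eq:ill_prep_id}. No refined analysis is required: the bound is a consequence of the elementary fact that taking a cell average cannot enlarge the $L^\infty$ deviation of a function from a constant, so the discrete density inherits the closeness of $\rho_0^\veps$ to $1$ with the same rate $\mcal{O}(\veps)$.

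First I would use that the constant function $1$ equals its own average over every primal cell, so that for each $K \in \M$ the discrete initial datum prescribed in \eqref{eq:dis_ic} satisfies
\[
  (\rho^\veps)^0_K - 1 = \frac{1}{\abs{K}}\int_K \big(\rho_0^\veps(\uu{x}) - 1\big)\,\dd\uu{x}.
\]
Taking absolute values and bounding the integrand by its essential supremum over $\Omega$ then gives the pointwise (in $K$) estimate
\[
  \abs{(\rho^\veps)^0_K - 1} \;\leq\; \frac{1}{\abs{K}}\int_K \abs{\rho_0^\veps(\uu{x}) - 1}\,\dd\uu{x} \;\leq\; \norm{\rho_0^\veps - 1}_{L^\infty(\Omega)}.
\]

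Next I would take the maximum over $K \in \M$; since the right-hand side above is already independent of $K$, this step is immediate, and dividing by $\veps$ yields
\[
  \frac{1}{\veps}\max_{K\in\M}\abs{(\rho^\veps)^0_K - 1} \;\leq\; \frac{1}{\veps}\norm{\rho_0^\veps - 1}_{L^\infty(\Omega)} \;\leq\; C,
\]
where the final inequality is exactly the density contribution to the ill-prepared bound \eqref{eq:ill_prep_id}, whose constant $C$ is independent of $\veps$ by hypothesis. This establishes \eqref{eq:dis_ill_prep_est}. There is no genuine obstacle in this argument; the only points worth flagging are that we rely on $\rho_0^\veps \in L^\infty(\Omega)$ so that the cell averages in \eqref{eq:dis_ic} are well defined and bounded, and that the positivity $\rho_0^\veps > 0$ plays no role in this particular estimate (it is needed elsewhere, for the positivity of the discrete density). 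One should also note that the constant $C$ produced here is the same one appearing in Definition~\ref{def:ill_prep_id} and in particular does not depend on the mesh, which is what makes the bound useful for the subsequent asymptotic analysis.
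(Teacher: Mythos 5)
Your proof is correct and is essentially identical to the paper's own argument: both write $(\rho^\veps)^0_K - 1$ as the cell average of $\rho_0^\veps - 1$ via \eqref{eq:dis_ic}, bound it by the $L^\infty$ norm, and invoke the ill-prepared condition \eqref{eq:ill_prep_id}. The only cosmetic difference is that the paper bounds by $\norm{\rho_0^\veps - 1}_{L^\infty(K)}$ before passing to the global norm, which changes nothing.
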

\begin{proof}
For each $K\in\mcal{M}$ and $\veps>0$, we have from \eqref{eq:dis_ic} and Definition~\ref{def:ill_prep_id} that there exists a constant $C>0$, independent of $\veps$, such that
\begin{equation}
    \frac{1}{\veps}\abs{(\rho^\veps)^0_K - 1}\leq \frac{1}{\veps}\frac{1}{\abs{K}}\int_{K}\abs{\rho^\veps_0 - 1}\dd\uu{x}\leq \frac{1}{\veps}\norm{\rho^\veps_0 - 1}_{L^\infty(K)}\leq C,
\end{equation}
which proves the required estimate \eqref{eq:dis_ill_prep_est}.
\end{proof}

\subsection{Existence of a Numerical Solution}
\label{sec:exist_soln}
The mass update \eqref{eq:dis_cons_mas} is nonlinear in $\rho^{n+1}$
due to the presence of the stabilisation terms. However, once
$\rho^{n+1}$ is calculated, the momentum update
\eqref{eq:dis_cons_mom} can be explicitly evaluated to get the
velocity. In what follows, we establish the existence of a discrete
solution to the numerical scheme
\eqref{eq:dis_cons_mas}-\eqref{eq:dis_cons_mom}. Our treatment is
analogous to the one in \cite{EGG+98,GMN19, NIR01, DYY06} which uses
the classical tools from topological degree theory in finite
dimensions to hyperbolic problems; see also \cite{Dei85}.  
\begin{theorem}
    \label{thm:existence}
    Let $(\rho^n,\uu{u}^n)\in
    L_{\mcal{M}}(\Omega)\times\uu{H}_{\mcal{E},0}(\Omega)$ be such
    that $\rho^{n}>0$ on $\Omega$. Then, there exists a solution
    $(\rho^{n+1},\uu{u}^{n+1})\in
    L_{\mcal{M}}(\Omega)\times\uu{H}_{\mcal{E},0}(\Omega)$ of
    \eqref{eq:dis_cons_mas}-\eqref{eq:dis_cons_mom}, satisfying
    $\rho^{n+1}>0$ on $\Omega$. 
\end{theorem}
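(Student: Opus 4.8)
The plan is to solve the scheme by a topological degree argument in the finite-dimensional space $\Lm$, in the spirit of the references cited just before the statement. Since the momentum balance \eqref{eq:dis_cons_mom} yields $\uu{u}^{n+1}$ explicitly once $\rho^{n+1}$ is available, the whole task reduces to producing a \emph{positive} solution $\rho^{n+1}$ of the mass balance \eqref{eq:dis_cons_mas}. The obstruction to solving \eqref{eq:dis_cons_mas} directly is its nonlinearity: through \eqref{eq:reg_vel} the stabilised velocity $\uu{v}^{n}=\uu{u}^{n}-\delta\uu{u}(\rho^{n+1})$ entering the mass flux \eqref{eq:mass_flux} depends on $\rho^{n+1}$ via the implicit pressure gradient, with $p^{n+1}=(\rho^{n+1})^{\gamma}$. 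Accordingly I would define $\mcal{F}\colon\Lm\to\Lm$ by $\mcal{F}(\rho)_{K}=\dt^{-1}(\rho_{K}-\rho_{K}^{n})+\volk^{-1}\sum_{\s\in\E(K)}F_{\s,K}(\rho,\uu{v}^{n}(\rho))$ and look for a zero of $\mcal{F}$ in the positive cone.

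The structural observation that drives the positivity is the following. If the coefficient state in $\uu{v}^{n}$ is \emph{frozen} at some positive $\rho^{\ast}$, then \eqref{eq:dis_cons_mas} becomes a linear system $A(\rho^{\ast})\,\rho^{n+1}=\rho^{n}$ with $A=\Id+\dt\,B(\uu{v}^{n}(\rho^\ast))$. The sign splitting \eqref{eq:v_sigKpm}, which forces $(v_{\s,K})^{+}\geq 0$ and $(v_{\s,K})^{-}\leq 0$, makes $A$ have strictly positive diagonal and nonpositive off-diagonal entries; moreover the mass flux is conservative (contributions telescope, as $\uu{v}^{n}\in\Hez$ vanishes on external edges), so that after weighting the rows by the cell volumes $\volk$ the matrix becomes strictly column diagonally dominant. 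Hence $A$ is an M-matrix with nonnegative inverse, \emph{unconditionally} in $\dt$, and by connectivity of the mesh $A^{-1}$ is entrywise positive. Since $\rho^{n}>0$, every such frozen linear solve returns a strictly positive density; this is precisely the mechanism that will guarantee positivity in the nonlinear problem as well.

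I would then run a homotopy $\mcal{H}(\cdot,\lambda)$, $\lambda\in[0,1]$, scaling the convective and stabilisation fluxes by $\lambda$, so that $\mcal{H}(\cdot,1)=\mcal{F}$ while $\mcal{H}(\rho,0)=\dt^{-1}(\rho-\rho^{n})$ is affine with unique zero $\rho^{n}>0$ and Brouwer degree $1$ on any admissible box. Two a priori bounds, uniform in $\lambda$, then drive homotopy invariance. First, any zero $\rho_{\lambda}$ of $\mcal{H}(\cdot,\lambda)$ satisfies $\rho_{\lambda}=A_{\lambda}(\rho_{\lambda})^{-1}\rho^{n}>0$ by the M-matrix property above, since scaling the flux by $\lambda$ preserves both the sign pattern and the conservation, hence the M-matrix structure. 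Second, summing \eqref{eq:dis_cons_mas} over $K\in\M$ the telescoping fluxes give the discrete mass conservation $\sum_{K}\volk\,\rho_{\lambda,K}=\sum_{K}\volk\,\rho_{K}^{n}$, which together with positivity yields $\rho_{\lambda,K}\leq(\sum_{L}\abs{L}\,\rho^{n}_{L})/\min_{K}\volk=:R$. These confine all zeros to a compact subset of the positive cone; choosing $\mcal{O}=\{\rho\colon 0<m<\rho_{K}<R\}$ accordingly, no zero reaches $\partial\mcal{O}$, so $\deg(\mcal{F},\mcal{O},0)=\deg(\mcal{H}(\cdot,0),\mcal{O},0)=1\neq 0$. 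A zero $\rho^{n+1}\in\mcal{O}$ of $\mcal{F}$ results, it is positive, and $\uu{u}^{n+1}$ is recovered explicitly from \eqref{eq:dis_cons_mom}; see \cite{Dei85} for the underlying degree-theoretic facts.

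The step I expect to be the main obstacle is making these estimates genuinely uniform in the \emph{nonlinear} regime, where the entries of $A_{\lambda}$ depend on $\rho^{n+1}$ through $(\rho^{n+1})^{\gamma}$ in the stabilisation. Concretely, one must verify that the M-matrix sign structure survives this self-consistent dependence for every admissible density — this is where the exact form of the split \eqref{eq:v_sigKpm} is essential — and, more delicately, secure a \emph{strict} uniform lower bound $\rho_{\lambda,K}\geq m>0$ so that solutions stay away from the degenerate set $\{\rho_{K}=0\}$ on which $\uu{v}^{n}(\rho)$ loses meaning. I would obtain the latter from the continuity and positivity of the M-matrix inverse on the compact, mass-bounded set, controlling the stabilised velocity by the discrete energy–dissipation estimate underlying \eqref{eq:r_eng_id_stab}.
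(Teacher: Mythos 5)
Your proposal is correct and follows essentially the same route as the paper's proof: a homotopy that scales the convective/stabilisation fluxes by $\lambda$, an upper bound obtained by summing the mass update over all cells, strict positivity forced by the upwind sign-splitting \eqref{eq:v_sigKpm} (your M-matrix observation is precisely the mechanism behind the paper's inequality \eqref{eq:ex_pos}), and invariance of the Brouwer degree to conclude; the momentum update is then evaluated explicitly, as you say. The only substantive difference is that the paper takes the admissible box $V=\{\rho : 0<\rho_K<C\}$ with lower bound $0$, so the strict-positivity argument on $\overline{V}$ already rules out zeros on the boundary — this dissolves the uniform lower bound $\rho_{\lambda,K}\geq m>0$ that you single out as the main remaining obstacle (though your compactness argument for it, using the upper bound $R$ to control $p(\rho)=\rho^{\gamma}$ and hence the stabilised velocities, would also go through).
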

\begin{proof}
Let $C>0$ be a constant such that 
\begin{equation}
\label{eq:C_thm_ex}
    C>\frac{\abs{\Omega}\max_{K\in\mcal{M}}\{\rho_{K}^n\}}{\min_{K\in\mcal{M}}\{\abs{K}\}}.
\end{equation}
Consider the bounded, open subset $V$ of $L_{\mcal{M}}(\Omega)$ defined by
\begin{equation*}
    V=\bigg\{\rho=\sum_{K\in\mcal{M}}\rho_{K}\mcal{X}_{K}\in L_{\mcal{M}}(\Omega)\colon 0<\rho_{K}<C, \ \forall K\in\mcal{M}\bigg\}.
\end{equation*}
We introduce a continuous function $H\colon[0,1]\times L_{\mcal{M}}(\Omega)\rightarrow L_{\mcal{M}}(\Omega)$, via 
$H(\lambda,\rho)=\sum_{K\in\mcal{M}}H(\lambda,\rho)_{K}\mcal{X}_{K}$, where
\begin{equation*}
    H(\lambda,\rho)_{K}=\frac{1}{\dt}(\rho_{K}-\rho^n_{K})+\frac{\lambda}{\abs{K}}\sum_{\s\in\mcal{E}(K)}F_{\s,K}(\rho,\uu{v}), \ \forall K \in \mcal{M},
\end{equation*}
with $v_\s = u_\s^n-\frac{\eta\dt}{\veps^2}(\D_\E^{(i)}p(\rho))_\s$. It is easy to note that $H(\lambda,\rho)$ is a homotopy connecting $H(0,\rho)$ and $H(1,\rho)$. In order to establish the existence of a discrete solution $\rho^{n+1}>0$ of \eqref{eq:dis_cons_mas}, we need to show that for any $\lambda\in [0,1]$, $H(\lambda,\cdot)$ has a non-zero topological degree with respect to $V$. In other words, we need to show that $H(\lambda,\cdot)\neq 0$ on $\D V$, for all $\lambda\in [0,1]$. On the contrary, let us assume that $H(\lambda,\rho)=0$, for some $\lambda\in[0,1]$ and $\rho\in\D V$, which implies
\begin{equation}
\label{eq:H_zero}
    \frac{1}{\dt}(\rho_{K}-\rho^n_{K})+\frac{\lambda}{|K|}\sum_{\s\in\mcal{E}(K)}F_{\s,K}(\rho,\uu{v})=0, \ \forall K \in \mcal{M}.
\end{equation}
Summing \eqref{eq:H_zero} over all $K\in\mcal{M}$, and using the definition of $C$ from \eqref{eq:C_thm_ex}, we obtain
\begin{equation}
\label{eq:rhoKub}
    \rho_K\leq \frac{\abs{\Omega}\max_{K\in\mcal{M}}\{\rho_{K}^n\}}{\min_{K\in\mcal{M}}\{\abs{K}\}}< C, \ \forall K\in\mcal{M}.
\end{equation}
Using \eqref{eq:H_zero}, we have for each $K\in\mcal{M}$ and any $\lambda\in[0,1]$,
\begin{equation}
    \label{eq:ex_pos}
    \rho_{K}\bigg[|\Omega|+\lambda\dt\sum_{\substack{\s\in\mcal{E}(K)\\\s=K|L}}|\s|\uu{v}_{\s,K}^+\Bigg]>0.  
\end{equation}
Combining the above inequality with \eqref{eq:rhoKub} gives $0<\rho_K<C$, for each $K\in\mcal{M}$, a contradiction as $\rho\in\D V$. Hence, $0\notin H(\{\lambda\}\times\partial V)$ for any $\lambda\in[0,1]$, and by topological degree arguments for finite dimensional spaces \cite{Dei85}, we get that $\deg(H(1,\cdot),V,0)=\deg(H(0,\cdot),V,0)$. Since $\deg(H(0,\cdot),V,0)\neq 0$ we further conclude that $H(1,\cdot)$ has a zero in $V$, i.e.\ there exists a solution $\rho^{n+1}\in L_{\mcal{M}}(\Omega)$ of \eqref{eq:dis_cons_mas}, such that $\rho^{n+1}>0$.
\end{proof}

\subsection{Discrete Energy Estimates}
\label{sec:id}
This section is devoted to proving apriori energy estimates satisfied
by the scheme \eqref{eq:dis_cons_mas}-\eqref{eq:dis_cons_mom} which
are discrete counterparts of the stability estimates stated in
Proposition~\ref{prop:r_engy_balance}.   
\begin{lemma}[Discrete positive renormalisation identity]
  Any solution to the system
  \eqref{eq:dis_cons_mas}-\eqref{eq:dis_cons_mom} satisfies the
  following equality: 
  \begin{equation}
    \label{eq:dis_porenorm}
    \begin{aligned}
      &\frac{|K|}{\dt}\big[\Pi_\gamma(\rho_{K}^{n+1})-\Pi_\gamma(\rho_{K}^{n})\big]+|K|p_{K}^{n+1}(\divM\uu{v}^n)_{K}\\
      &+\sum_{\substack{\s\in\mcal{E}(K)\\\s=K|L}}|\s|\Big[\big(\psi_\gamma(\rho_{K}^{n+1})-\rho_{K}^{n+1}\psi_\gamma^{\prime}(1)\big)(v_{\s,K}^n)^{+}+\big(\psi_\gamma(\rho_{L}^{n+1})-\rho_{L}^{n+1}\psi_\gamma^{\prime}(1)\big)(v_{\s,K}^{n})^{-}\Big]+R_{K,\dt}^{n+1}=0,
    \end{aligned}
  \end{equation}
  where the non-negative remainder term is defined by
  \begin{equation}
    \label{eq:RKn+1}
    R_{K,\dt}^{n+1}=\frac{|K|}{2\dt}\big(\rho_{K}^{n+1}-\rho_{K}^{n}\big)^2\psi_\gamma^{\prime\prime}\Big(\bar\rho_{K}^{n+\frac{1}{2}}\Big)+\sum_{\s\in\mcal{E}(K)}|\s|(-(v_{\s,K}^{n})^{-})(\rho_{L}^{n+1}-\rho_{K}^{n+1})^2\psi_\gamma^{\prime\prime}\big(\bar\rho_{\s}^{n+1}\big). 
  \end{equation}
\end{lemma}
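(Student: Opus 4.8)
The plan is to derive \eqref{eq:dis_porenorm} by a discrete renormalisation that mimics the passage from the mass equation to the continuous identity \eqref{eq:r_porenorm}. Concretely, I would multiply the discrete mass balance \eqref{eq:dis_cons_mas} by $\abs{K}\Pi_\gamma'(\rho_K^{n+1})=\abs{K}\big(\psi_\gamma'(\rho_K^{n+1})-\psi_\gamma'(1)\big)$, the discrete counterpart of testing the continuous equation against $\Pi_\gamma'(\rho^\veps)$. The engine of the whole computation is the thermodynamic identity $p(\rho)=\rho\psi_\gamma'(\rho)-\psi_\gamma(\rho)$, equivalently $\rho\psi_\gamma'(\rho)=\psi_\gamma(\rho)+p(\rho)$, which one verifies directly from \eqref{eq:psi_gamma} for both $\gamma=1$ and $\gamma>1$; it is precisely this relation that converts the $\psi_\gamma'$-weighted mass fluxes into the convective $\psi_\gamma$-fluxes together with the pressure terms appearing in \eqref{eq:dis_porenorm}.

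For the discrete time derivative I would treat $\frac{\abs{K}}{\dt}(\rho_K^{n+1}-\rho_K^n)\Pi_\gamma'(\rho_K^{n+1})$ by a second-order Taylor expansion with Lagrange remainder. Since $\Pi_\gamma''=\psi_\gamma''$, this produces $\frac{\abs{K}}{\dt}\big[\Pi_\gamma(\rho_K^{n+1})-\Pi_\gamma(\rho_K^n)\big]$ together with the first contribution $\frac{\abs{K}}{2\dt}(\rho_K^{n+1}-\rho_K^n)^2\psi_\gamma''(\bar\rho_K^{n+1/2})$ to the remainder, where $\bar\rho_K^{n+1/2}$ lies between $\rho_K^n$ and $\rho_K^{n+1}$.

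The flux term $\Pi_\gamma'(\rho_K^{n+1})\sum_{\s\in\E(K)}F_{\s,K}(\rho^{n+1},\uu{v}^n)$ is where the real work lies, and I would split it along the upwind decomposition $F_{\s,K}=\abs{\s}\{\rho_K^{n+1}(v_{\s,K}^n)^+ + \rho_L^{n+1}(v_{\s,K}^n)^-\}$. On the outgoing part carrying $(v_{\s,K}^n)^+$, the multiplier and the flux share the value $\rho_K^{n+1}$, so the thermodynamic identity converts it exactly into $\abs{\s}\big(\psi_\gamma(\rho_K^{n+1})-\rho_K^{n+1}\psi_\gamma'(1)\big)(v_{\s,K}^n)^+$ plus $\abs{\s}p_K^{n+1}(v_{\s,K}^n)^+$, with no remainder. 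On the incoming part carrying $(v_{\s,K}^n)^-$, the convective term should be the $L$-term $\big(\psi_\gamma(\rho_L^{n+1})-\rho_L^{n+1}\psi_\gamma'(1)\big)(v_{\s,K}^n)^-$, yet the multiplier still carries $\psi_\gamma'(\rho_K^{n+1})$; I would insert and subtract $\abs{\s}p_K^{n+1}(v_{\s,K}^n)^-$ and apply $p(\rho)=\rho\psi_\gamma'(\rho)-\psi_\gamma(\rho)$, after which the residual collapses, via a second-order Taylor expansion of $\psi_\gamma$ between $\rho_K^{n+1}$ and $\rho_L^{n+1}$, into the $L$-convective term plus a boundary-dissipation remainder proportional to $(-(v_{\s,K}^n)^-)(\rho_L^{n+1}-\rho_K^{n+1})^2\psi_\gamma''(\bar\rho_\s^{n+1})$. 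Summing the pressure contributions over $\s\in\E(K)$ and using $v_{\s,K}^n=(v_{\s,K}^n)^+ + (v_{\s,K}^n)^-$ gives $p_K^{n+1}\sum_{\s\in\E(K)}\abs{\s}v_{\s,K}^n=\abs{K}p_K^{n+1}(\divM\uu{v}^n)_K$, which is exactly the pressure term in \eqref{eq:dis_porenorm}.

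Finally I would verify $R_{K,\dt}^{n+1}\geq0$: the function $\psi_\gamma$ is $C^2$ and strictly convex on $(0,\infty)$ for $\gamma\geq1$ (so $\psi_\gamma''>0$), the positivity of $\rho_K^{n+1}$ and $\rho_L^{n+1}$ guaranteed by Theorem~\ref{thm:existence} ensures that every intermediate point lies in $(0,\infty)$, and $-(v_{\s,K}^n)^-\geq0$ holds by definition; hence both contributions to the remainder are non-negative. I expect the only delicate step to be the algebra on the incoming flux part, namely correctly recognising that it is $p_K^{n+1}$ (and not $p_L^{n+1}$) that must be paired with $(v_{\s,K}^n)^-$ and identifying the leftover as a Lagrange remainder through the thermodynamic identity; the remaining manipulations are routine bookkeeping.
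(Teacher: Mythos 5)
Your proposal is correct and follows essentially the same route as the paper, which simply defers to the standard argument of \cite[Lemma A.2]{HKL14}: multiply the discrete mass balance \eqref{eq:dis_cons_mas} by $\abs{K}\,\Pi_\gamma'(\rho_K^{n+1})$, handle the time increment and the upwind flux terms by second-order Taylor expansions with Lagrange remainders, and use the identity $p(\rho)=\rho\psi_\gamma'(\rho)-\psi_\gamma(\rho)$ to extract the pressure--divergence term, with non-negativity of $R_{K,\dt}^{n+1}$ following from the convexity of $\psi_\gamma$, the positivity of the densities, and the sign convention $(v_{\s,K}^n)^-\leq 0$. Your reconstruction correctly identifies the one delicate point (pairing $p_K^{n+1}$, not $p_L^{n+1}$, with the incoming part $(v_{\s,K}^n)^-$), so it matches the proof the paper invokes.
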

\begin{proof}
  The proof follows from the definition of $\Pi_\gamma$ and
  straightforward calculations; see \cite[Lemma A.2]{HKL14} for
  details. 
\end{proof}
\begin{lemma}[Discrete kinetic energy identity]Any solution to the system \eqref{eq:dis_cons_mas}-\eqref{eq:dis_cons_mom} satisfies the following equality for $1\leq i\leq d,\; \s\in\E_\intr^{(i)}$ and $0\leq n\leq{N-1}$:
\begin{equation}
\label{eq:dis_kinbal}
\begin{aligned}
 \frac{1}{2}\frac{|\Ds|}{\dt}\big(\rho_{\Ds}^{n+1}(u_\s^{n+1})^2-\rho_{\Ds}^{n}(u_\s^{n})^2\big)+\sum_{\epsilon\in\tilde\E(\Ds)}F_{\eps,\s}(\rho^{n+1},\uu{v}^{n})\frac{|u_\eps^{n}|^2}{2}+\frac{1}{\veps^2}|\Ds|v_\s^{n}\big(\D^{(i)}_\E p^{n+1}\big)_{\sigma}+R_{\s,\dt}^{n+1}\\
 =-\frac{\eta\dt}{\veps^4}|\Ds|\big(\D^{(i)}_\E p^{n+1}\big)_\s^2,
\end{aligned}
\end{equation}
\end{lemma}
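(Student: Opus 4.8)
The plan is to reproduce, at the discrete level, the derivation of the continuous kinetic energy balance \eqref{eq:r_kinbal}. Since the time difference in the momentum update \eqref{eq:dis_cons_mom} is implicit while the convection is explicit, the natural test function is the \emph{new} velocity: I would multiply \eqref{eq:dis_cons_mom} through by $\abs{\Ds}u_\s^{n+1}$ and then reorganise each of the three resulting contributions --- the discrete time derivative, the upwind convection, and the pressure gradient --- into a conservative part plus a remainder, using the dual mass balance \eqref{eq:dis_cons_mass_dual} to cancel the spurious density increments that appear. The term $R_{\s,\dt}^{n+1}$ is then \emph{defined} as the collection of the leftover pieces, so that \eqref{eq:dis_kinbal} becomes an exact identity.

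For the time-derivative term, multiplied by $\frac{\abs{\Ds}}{\dt}$, I would invoke the elementary identity $(\rho_{\Ds}^{n+1}u_\s^{n+1}-\rho_{\Ds}^n u_\s^n)u_\s^{n+1}=\half(\rho_{\Ds}^{n+1}(u_\s^{n+1})^2-\rho_{\Ds}^n(u_\s^n)^2)+\half(\rho_{\Ds}^{n+1}-\rho_{\Ds}^n)(u_\s^{n+1})^2+\half\rho_{\Ds}^n(u_\s^{n+1}-u_\s^n)^2$. The first group produces exactly the discrete kinetic-energy time derivative of \eqref{eq:dis_kinbal}; the last group contributes the non-negative term $\frac{\abs{\Ds}}{2\dt}\rho_{\Ds}^n(u_\s^{n+1}-u_\s^n)^2$ to $R_{\s,\dt}^{n+1}$; and the middle group is removed through the dual mass balance \eqref{eq:dis_cons_mass_dual}, which rewrites $\frac{\abs{\Ds}}{\dt}(\rho_{\Ds}^{n+1}-\rho_{\Ds}^n)$ as $-\sum_{\eps\in\Eds}F_{\eps,\s}(\rho^{n+1},\uu{v}^n)$, ready to be merged with the convective contribution.

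The merged mass term combines with $(\sum_{\eps}F_{\eps,\s}u_{\eps,\mathrm{up}}^n)u_\s^{n+1}$ to give $\sum_{\eps}F_{\eps,\s}[u_{\eps,\mathrm{up}}^n u_\s^{n+1}-\half(u_\s^{n+1})^2]$, and the completion-of-squares identity $ab-\half b^2=\half a^2-\half(a-b)^2$ (with $a=u_{\eps,\mathrm{up}}^n$, $b=u_\s^{n+1}$) splits this into the conservative flux $\sum_{\eps}F_{\eps,\s}\frac{\abs{u_\eps^n}^2}{2}$ of \eqref{eq:dis_kinbal}, where $u_\eps^n=u_{\eps,\mathrm{up}}^n$, plus the remainder $-\sum_{\eps}F_{\eps,\s}\,\half(u_{\eps,\mathrm{up}}^n-u_\s^{n+1})^2$. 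For the pressure term the decisive ingredient is the velocity shift \eqref{eq:reg_vel}: writing $u_\s^{n+1}=v_\s^n+\delta u_\s(\rho^{n+1})+(u_\s^{n+1}-u_\s^n)$ and using $\delta u_\s(\rho^{n+1})=\frac{\eta\dt}{\veps^2}(\D^{(i)}_\E p^{n+1})_\s$, the product $\frac{1}{\veps^2}\abs{\Ds}(\D^{(i)}_\E p^{n+1})_\s u_\s^{n+1}$ decomposes exactly into the stated left-hand pressure term $\frac{1}{\veps^2}\abs{\Ds}v_\s^n(\D^{(i)}_\E p^{n+1})_\s$, the dissipative right-hand side $-\frac{\eta\dt}{\veps^4}\abs{\Ds}(\D^{(i)}_\E p^{n+1})_\s^2$ (after transposition), and a leftover $\frac{1}{\veps^2}\abs{\Ds}(u_\s^{n+1}-u_\s^n)(\D^{(i)}_\E p^{n+1})_\s$. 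Absorbing the three residual pieces into $R_{\s,\dt}^{n+1}$ and equating to zero yields \eqref{eq:dis_kinbal}.

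Once the multiplier $u_\s^{n+1}$ is fixed, the work is purely algebraic. The genuinely delicate point is the mismatch of time levels: the time difference is implicit and hence generates $(u_\s^{n+1})^2$, whereas the convection is explicit and carries $u_{\eps,\mathrm{up}}^n$, so the completion-of-squares step unavoidably produces the cross remainder $-\sum_{\eps}F_{\eps,\s}\,\half(u_{\eps,\mathrm{up}}^n-u_\s^{n+1})^2$ alongside the pressure leftover, neither of which is sign-definite term by term. The present lemma therefore asserts only the exact equality \eqref{eq:dis_kinbal}; the non-negativity of $R_{\s,\dt}^{n+1}$, on which the entropy stability ultimately rests, is a separate issue that I expect to be the main obstacle and to require the CFL-type restriction on $\eta$ and $\dt$ foreshadowed after \eqref{eq:reg_vel}, rather than anything beyond bookkeeping in establishing the identity itself.
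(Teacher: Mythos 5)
Your overall strategy (multiply the momentum balance by a velocity, use the dual mass balance \eqref{eq:dis_cons_mass_dual} to absorb the density increment, complete squares) is the right kind of computation, but your choice of multiplier is not the paper's, and it does not produce the identity \eqref{eq:dis_kinbal} with the remainder that the paper actually defines right after the lemma, namely
\begin{equation*}
R_{\s,\dt}^{n+1}=-\frac{|\Ds|}{2\dt}\rho_{\Ds}^{n+1}(u_\s^{n+1}-u_\s^n)^2-\sum_{\substack{\eps\in\tilde\E(\Ds)\\ \eps=\Ds|D_{\s^{\prime}}}}F_{\eps,\s}(\rho^{n+1},\uu{v}^{n})^{-}\frac{(u_\s^n-u_{\s^{\prime}}^n)^2}{2}.
\end{equation*}
The paper multiplies \eqref{eq:dis_cons_mom} by $|\Ds|u_\s^{n}$ --- the \emph{old} velocity --- not by $|\Ds|u_\s^{n+1}$, and this choice is forced by three features of the scheme, all of which your computation loses. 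First, the time term: $(\rho^{n+1}_{\Ds}u^{n+1}_\s-\rho^n_{\Ds}u^n_\s)u^n_\s=\half\big(\rho^{n+1}_{\Ds}(u^{n+1}_\s)^2-\rho^n_{\Ds}(u^n_\s)^2\big)+\half(\rho^{n+1}_{\Ds}-\rho^n_{\Ds})(u^n_\s)^2-\half\rho^{n+1}_{\Ds}(u^{n+1}_\s-u^n_\s)^2$, which yields exactly the first term of the remainder above (nonpositive, with density at level $n+1$); your identity instead produces $+\half\rho^n_{\Ds}(u^{n+1}_\s-u^n_\s)^2$, with the opposite sign and the wrong density level. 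Second, the convection: after merging with the mass balance, the square remainder is $-\sum_\eps F_{\eps,\s}\,\half(u^n_{\eps,\mathrm{up}}-u^n_\s)^2$, and since the upwinding is performed at time level $n$, this square \emph{vanishes} whenever $F_{\eps,\s}\geq 0$ (then $u^n_{\eps,\mathrm{up}}=u^n_\s$), leaving precisely the sum over negative parts $F^-$ in the displayed remainder; with your multiplier the cross-level squares $(u^n_{\eps,\mathrm{up}}-u^{n+1}_\s)^2$ do not collapse under upwinding, exactly as you concede. Third, the pressure: the stabilisation \eqref{eq:reg_vel} gives the \emph{exact} relation $u^n_\s=v^n_\s+\frac{\eta\dt}{\veps^2}(\D^{(i)}_\E p^{n+1})_\s$, so testing with $u^n_\s$ splits the pressure work into the stated term $\frac{1}{\veps^2}|\Ds|v^n_\s(\D^{(i)}_\E p^{n+1})_\s$ plus the dissipation $\frac{\eta\dt}{\veps^4}|\Ds|(\D^{(i)}_\E p^{n+1})_\s^2$ with \emph{no} leftover, whereas your decomposition of $u_\s^{n+1}$ leaves the extra, non-sign-definite term $\frac{1}{\veps^2}|\Ds|(u^{n+1}_\s-u^n_\s)(\D^{(i)}_\E p^{n+1})_\s$.

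The issue is not cosmetic. Since the statement fixes $R_{\s,\dt}^{n+1}$, an identity with a \emph{different} remainder is a different lemma; and the specific structure of the paper's remainder is exactly what the proof of the total energy balance (Theorem~\ref{lem:dis_totbal}) consumes: the nonpositive time part is compensated there via the velocity update \eqref{eq:dis_vel_dual} and gives rise to condition (i) on $\eta$, while the nonnegative convection part is balanced against it under the CFL condition (ii). Your residual terms (cross-level convection squares and the pressure leftover) admit no such treatment. Your guiding heuristic --- implicit time difference, hence test with the new velocity --- is the standard one for fully implicit convection, but here the convective fluxes, the upwind values, and the shifted velocity $\uu{v}^n$ are all anchored at level $n$, which makes $u^n_\s$ the canonical multiplier; with that single change, your bookkeeping reproduces the paper's one-line proof.
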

where the remainder term $R_{\s,\dt}^{n+1}$ is defined by
\begin{equation}
    \label{eq:Rsn+1}
    R_{\s,\dt}^{n+1}=
    -\frac{|\Ds|}{2\dt}\rho_{\Ds}^{n+1}(u_\s^{n+1}-u_\s^n)^2-\sum_{\substack{\eps\in\tilde\E(\Ds)\\\eps=\Ds|D_{\s^{\prime}}}}F_{\eps,\s}(\rho^{n+1},\uu{v}^{n})^{-}\frac{(u_\s^n-u_{\s^{\prime}}^n)^2}{2}.
\end{equation}
\begin{proof}
Multiplying the momentum balance equation \eqref{eq:dis_cons_mom} by $|\Ds|u_\s^n$ and using the dual mass balance \eqref{eq:dis_cons_mass_dual} completes the proof.
\end{proof}
\begin{theorem}[Total energy balance]
\label{lem:dis_totbal}
Any solution to the system \eqref{eq:dis_cons_mas}-\eqref{eq:dis_cons_mom} satisfies the following entropy inequality: 
\begin{equation}
\label{eq:dis_totbal}
  \frac{1}{\veps^2}\sum_{K\in\M}|K|\Pi_\gamma(\rho_{K}^{n+1})+\sum_{\s\in\E_\intr}|\Ds|\frac{1}{2}\rho_{\Ds}^{n+1}(u_{\s}^{n+1})^2 \leq 
  \frac{1}{\veps^2}\sum_{K\in\M}|K|\Pi_\gamma(\rho_{K}^{n})+\sum_{\s\in\E_\intr}|\Ds|\frac{1}{2}\rho_{\Ds}^{n}(u_{\s}^{n})^2,
\end{equation}
under the conditions $\forall\s \in\E_\intr^{(i)}, \ 1 \leq i \leq d$:

\begin{enumerate}[label=(\roman*)]
\item $\displaystyle \eta \geq \frac{1}{\rho_{\Ds}^{n+1}}$, 
\item $\displaystyle \frac{\dt}{|\Ds|}\sum_{\eps\in\tilde\E(\Ds)}\frac{-F_{\eps,\s}(\rho^{n+1},\uu{v}^n)^-}{\rho_{\Ds}^{n+1}}\leq\frac{1}{2}$.
\end{enumerate}
\end{theorem}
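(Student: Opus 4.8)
The plan is to prove \eqref{eq:dis_totbal} by adding the two preceding identities, exactly as the continuous entropy balance \eqref{eq:r_eng_id} was obtained by adding the positive renormalisation identity \eqref{eq:r_porenorm} and the kinetic energy balance \eqref{eq:r_kinbal}. Concretely, I would sum the discrete positive renormalisation identity \eqref{eq:dis_porenorm} over all $K\in\M$ and divide by $\veps^2$, sum the discrete kinetic energy identity \eqref{eq:dis_kinbal} over all $\s\in\E_\intr$, and add the two. The time-difference terms then assemble precisely into the two sides of \eqref{eq:dis_totbal}, so the theorem reduces to checking that the sum of all the remaining terms has the right sign.

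Three cancellations do most of the work. First, the convective contributions vanish under summation: in \eqref{eq:dis_porenorm} the primal flux terms telescope since each interior edge $\s=K|L$ is seen with opposite orientation from $K$ and from $L$ (because $(v_{\s,L}^n)^\pm=-(v_{\s,K}^n)^\mp$), while the homogeneous boundary condition removes the external edges; the conservation $F_{\eps,\s}=-F_{\eps,\s^{\prime}}$ of the dual fluxes makes the convective term of \eqref{eq:dis_kinbal} telescope in the same way. Second, the two pressure terms $\frac{1}{\veps^2}\sum_{K}\volk p_K^{n+1}(\divM\uu{v}^n)_K$ and $\frac{1}{\veps^2}\sum_{\s}\volds v_\s^n(\D^{(i)}_{\E}p^{n+1})_\s$ cancel \emph{exactly}, being precisely the two terms of the gradient--divergence duality \eqref{eq:disc_dual} applied with $q=p^{n+1}$ and $\uu{v}=\uu{v}^n$. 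Third, the renormalisation remainder $\sum_K R_{K,\dt}^{n+1}$ is non-negative, by convexity of $\psi_\gamma$ together with $-(v_{\s,K}^n)^-\ge0$, and may be discarded.

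After these cancellations, multiplying through by $\dt$, the claim reduces to the single sign condition
\begin{equation*}
\dt\sum_{\s\in\E_\intr}R_{\s,\dt}^{n+1}+\frac{\eta\dt^2}{\veps^4}\sum_{\s\in\E_\intr}\volds\big(\D^{(i)}_{\E}p^{n+1}\big)_\s^2\ge0.
\end{equation*}
I expect the main obstacle to be the first summand: recalling \eqref{eq:Rsn+1}, the piece $-\tfrac{\volds}{2\dt}\rho_{\Ds}^{n+1}(u_\s^{n+1}-u_\s^n)^2$ is \emph{negative} and must be absorbed by the positive terms. I would control it by inserting the velocity update \eqref{eq:dis_vel_dual}, which splits $u_\s^{n+1}-u_\s^n$ into a convective part $C_\s$ and a pressure part $P_\s=-\tfrac{\dt}{\veps^2\rho_{\Ds}^{n+1}}(\D^{(i)}_{\E}p^{n+1})_\s$, and applying $(C_\s+P_\s)^2\le 2C_\s^2+2P_\s^2$; the factor $2$ is then absorbed by the $\tfrac12$ in front, leaving $C_\s^2$ and $P_\s^2$ with unit coefficients to be dominated separately.

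This separation is exactly where hypotheses (i) and (ii) enter. For the pressure part, $\sum_\s\volds\rho_{\Ds}^{n+1}P_\s^2=\frac{\dt^2}{\veps^4}\sum_\s\frac{\volds}{\rho_{\Ds}^{n+1}}(\D^{(i)}_{\E}p^{n+1})_\s^2$ is dominated termwise by the explicit dissipation $\frac{\eta\dt^2}{\veps^4}\volds(\D^{(i)}_{\E}p^{n+1})_\s^2$ precisely when $\eta\ge 1/\rho_{\Ds}^{n+1}$, which is hypothesis (i). For the convective part, the weighted Cauchy--Schwarz inequality $\big(\sum_\eps a_\eps(u_{\s^{\prime}}^n-u_\s^n)\big)^2\le\big(\sum_\eps a_\eps\big)\big(\sum_\eps a_\eps(u_{\s^{\prime}}^n-u_\s^n)^2\big)$ with $a_\eps=-F_{\eps,\s}(\rho^{n+1},\uu{v}^n)^-\ge0$ gives, for each $\s$, the bound $\volds\rho_{\Ds}^{n+1}C_\s^2\le\dt\big(\tfrac{\dt}{\volds}\sum_\eps\tfrac{a_\eps}{\rho_{\Ds}^{n+1}}\big)\sum_\eps a_\eps(u_{\s^{\prime}}^n-u_\s^n)^2$; hypothesis (ii) bounds the bracketed factor by $\tfrac12$, so after summation $\sum_\s\volds\rho_{\Ds}^{n+1}C_\s^2$ is dominated by the upwind dissipation $-\dt\sum_\s\sum_\eps F_{\eps,\s}^-\tfrac{(u_\s^n-u_{\s^{\prime}}^n)^2}{2}$ already contained in $\dt\sum_\s R_{\s,\dt}^{n+1}$. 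Adding the two estimates verifies the displayed sign condition and hence \eqref{eq:dis_totbal}.
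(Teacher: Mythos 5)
Your proposal is correct and follows essentially the same route as the paper's proof: summing the discrete positive renormalisation identity \eqref{eq:dis_porenorm} over $K\in\M$ (scaled by $1/\veps^2$) and the discrete kinetic energy identity \eqref{eq:dis_kinbal} over $\s\in\E_\intr$, cancelling the pressure terms via the gradient--divergence duality \eqref{eq:disc_dual}, discarding the unconditionally non-negative remainder $\sum_K R_{K,\dt}^{n+1}$, and then controlling $\sum_\s R_{\s,\dt}^{n+1}$ by inserting the velocity update \eqref{eq:dis_vel_dual}, applying $(a+b)^2\leq 2(a^2+b^2)$ and the weighted Cauchy--Schwarz inequality, with conditions (i) and (ii) supplying the signs. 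The only difference is presentational: the paper compresses the final estimate into \eqref{eq:dis_totenbal_press} by citing \cite[Lemma 3.1]{DVB17}, whereas you carry out that computation explicitly, splitting $u_\s^{n+1}-u_\s^n$ into its convective and pressure parts and dominating each termwise.
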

\begin{proof}
We take the sum over $K\in\M$ in \eqref{eq:dis_porenorm} and over $\s\in\E_\intr$ in \eqref{eq:dis_kinbal}, and add the resulting equations to get
\begin{equation}
    \label{eq:thm_TE_eq3}
    \begin{aligned}
    \frac{1}{\veps^2\dt}\sum_{K\in\M}|K|\Big(\Pi_\gamma(\rho_{K}^{n+1})-\Pi_\gamma(\rho_{K}^{n})\Big)+\sum_{\s\in\E_\intr}\frac{1}{2}\frac{|\Ds|}{\dt}\Big(\rho_{\Ds}^{n+1}(u_\s^{n+1})^2-\rho_{\Ds}^{n}(u_\s^n)^2\Big)\\
    \quad+\mcal{R}_{\M,\dt}^{n+1}+\mcal{R}_{\E,\dt}^{n+1}=-\frac{\eta\dt}{\veps^4}\sum_{i=1}^d\sum_{\s\in\E_\intr^{(i)}}|\Ds|\big(\D^{(i)}_\E p^{n+1}\big)_\s^2,
    \end{aligned}
\end{equation}
where the global remainder terms $\mcal{R}_\M$ and $\mcal{R}_\E$ are
obtained by summing the local remainders $R_K$ and $R_\s$,
respectively. Clearly, $\mcal{R}_\M$ is non-negative
unconditionally. In order to derive a condition to make $\mcal{R}_\E$
non-negative, we apply the inequality $(a+b)^2\leq 2(a^2 + b^2), \;a,b\in\mbb{R}$ in the velocity update \eqref{eq:dis_vel_dual} and the
Cauchy-Schwarz inequality on the second term of $\mcal{R}_{\E}$
analogously as in \cite[Lemma 3.1]{DVB17} and finally obtain   
\begin{equation}
\label{eq:dis_totenbal_press}
\begin{aligned}
   &\frac{1}{\veps^2\dt}\sum_{K\in\M}|K|\Big(\Pi_\gamma(\rho_{K}^{n+1})-\Pi_\gamma(\rho_{K}^{n})\Big)+\sum_{\s\in\E_\intr}\frac{1}{2}\frac{|\Ds|}{\dt}\Big(\rho_{\Ds}^{n+1}(u_\s^{n+1})^2-\rho_{\Ds}^{n}(u_\s^n)^2\Big)\\
    &+\frac{\dt}{\veps^4}\sum_{i=1}^d\sum_{\s\in\tilde\E_\intr^{(i)}}\bigg(\eta- \frac{1}{\rho_{\Ds}^{n+1}}\bigg)\big(\D_\E^{(i)}p^{n+1}\big)_\s^2\\
   &\leq\sum_{\s\in\E_\intr}\left(\frac{1}{2}-\frac{\dt}{|\Ds|}\sum_{\epsilon\in\E(\Ds)}\frac{-(F_{\epsilon,\s}(\rho^{n+1},\uu{v}^{n}))^{-}}{\rho_{\Ds}^{n+1}}\right)\left(\sum_{\substack{\epsilon\in\bar\E(\Ds)\\\epsilon=\Ds|D_{\s^{\prime}}}}F_{\epsilon,\sigma}(\rho^{n+1},\uu{v}^{n})^-(u_\s^{n}-u_{\s^\prime}^{n})^2\right).
\end{aligned}
\end{equation}
Note that the right hand side is non-positive under the time step restriction
\begin{equation*}
\displaystyle
\frac{\dt}{|\Ds|}\sum_{\eps\in\tilde\E(\Ds)}\frac{-F_{\eps,\s}(\rho^{n+1},\uu{v}^n)^-}{\rho_{\Ds}^{n+1}}\leq\frac{1}{2}, 
\end{equation*}
and the third term on the left hand side is non-negative under the
condition $\eta \geq \frac{1}{\rho^{n+1}_{\Ds}}$. 
\end{proof}
\begin{remark}
  The conditions (i) and (ii) in the above theorem are implicit in
  nature. However, we observe that the following implicit time-step
  restriction 
  \begin{equation}
    \dfrac{\dt}{|\Ds|}\sum_{\eps\in\tilde\E(\Ds)}
    \frac{\abs{F_{\eps,\s}(\rho^{n+1},\uu{v}^n)}}{\rho_{\Ds}^{n+1}}\leq\frac{1}{2}
    \label{eq:suftime}  
  \end{equation}
  gives a sufficient condition from which we can deduce the condition
  (ii). Now, from \eqref{eq:suftime} and the dual mass balance
  \eqref{eq:dis_cons_mass_dual}, we have  
  \begin{equation}
    \frac{3}{2}\rho_{\Ds}^{n+1} - \rho_{\Ds}^n \geq  \rho_{\Ds}^{n+1}
    - \rho_{\Ds}^n +
    \frac{\dt}{|\Ds|}\sum_{\eps\in\tilde\E(\Ds)}\abs{F_{\eps,\s}(\rho^{n+1},\uu{v}^n)}\geq
    0. 
  \end{equation}
  Hence, we get
  \begin{equation}
    \label{eq:rho_3/2}
    \frac{\rho_{\Ds}^n }{\rho_{\Ds}^{n+1}}\leq \frac{3}{2}.
  \end{equation}
  Therefore, at each interface $\sigma$, choosing $\eta$ such that
  $\eta=\eta_1/\rho_{\Ds}^n$ with $\eta_1>\frac{3}{2}$ will guarantee
  the condition (i) and hence the stability of the scheme. In other
  words, the value of $\eta$ can be obtained explicitly. Analogous
  considerations can also be found in \cite{DVB20}.   
\end{remark}

\section{Weak Consistency of the Scheme}
\label{sec:weak_cons}

Goal of this section is to show a Lax-Wendroff-type weak consistency
of the scheme \eqref{eq:dis_cons_mas}-\eqref{eq:dis_cons_mom} which is
essentially proving the consistency of the numerical solution with a
weak solution of the Euler system when the mesh parameters tend to
zero. To this end, we have borrowed the tools developed in
\cite{GHL19, GHL22} to prove weak consistency of finite volume
discretisations of general convective operators. 
\begin{definition}
\label{def:weak_soln}
$(\rho^\veps,\uu{u}^\veps)\in L^\infty(Q)^{1+d}$ is a weak solution to
the Euler system \eqref{eq:cons_mas}-\eqref{eq:cons_mom} with
initial-boundary conditions \eqref{eq:eq_ic} if $\rho^\veps>0$ a.e.\
in $Q$ and the following identities hold:   
\begin{gather}
    \int_0^T\int_\Omega\left(\rho^\veps\D_t\psi+\rho^\veps \uu{u}^\veps\cdot\grd_x \psi\right)\dd \uu{x}\dd t
     =-\int_\Omega\rho^\veps_0\psi(0,\cdot)\dd \uu{x} \label{eq:weak_soln_mas} ,\ \text{for all } \psi \in C_c^\infty([0,T)\times\bar{\Omega}),\\
     \int_0^T\int_\Omega\left(\rho^\veps \uu{u}^\veps\cdot\D_t\uu{\psi}+(\rho^\veps \uu{u}^\veps\otimes\uu{u}^\veps):\grd_x \uu{\psi}\right)\dd \uu{x}\dd t+\int_0^T\int_{\Omega}p^\veps\grd_x\uu{\psi} \dd \uu{x}\dd t= -\int_{\Omega}\rho^\veps_0 \uu{u}^\veps_0\cdot\uu{\psi}(0,\cdot)\dd \uu{x},\label{eq:weak_soln_mom}\\ 
     \text{for all } \uu{\psi} \in C_c^\infty([0,T)\times\bar{\Omega})^d.\nonumber
    \end{gather}
\end{definition}
 
The following theorem is a Lax-Wendroff-type consistency formulation
for the semi-implicit scheme
\eqref{eq:dis_cons_mas}-\eqref{eq:dis_cons_mom}. 
\begin{theorem}
\label{thm:weak_cons}
Let $\Omega$ be an open bounded subset of $\mbb{R}^d$. Assume that $\big(\mcal{T}^{(m)},\delta t^{(m)}\big)_{m\in\mbb{N}}$ is a sequence of discretisations such that both $\lim_{m\rightarrow +\infty}\delta t^{(m)}$ and $\lim_{m\rightarrow +\infty}h^{(m)}$ are $0$. Let $\big(\rho^{(m)},\uu{u}^{(m)})_{m\in\mbb{N}}$ be the corresponding sequence of discrete solutions with respect to an initial data $(\rho^\veps_0,\uu{u}^\veps_0)\in L^\infty(\Omega)^{1+d}$. We assume that $(\rho^{(m)},\uu{u}^{(m)})_{m\in\mbb{N}}$ satisfies the following:

$\big(\rho^{(m)},u^{(m)}\big)_{m\in\mbb{N}}$ is uniformly bounded in
$L^\infty(Q)^{1+d}$, i.e.\  
\begin{align}
\underbar{C}\leq(\rho^{(m)})^n_K &\leq \overline{C}, \ \forall K\in\mcal{M}^{(m)}, \ 0\leq n\leq N^{(m)}, \ \forall m\in\mbb{N}\label{eq:dens_abs_bound}, \\
|(u^{(m)})^n_\sigma| &\leq C, \ \forall \sigma\in\mcal{E}^{(m)}, \ 0\leq n\leq N^{(m)}, \ \forall m\in\mbb{N}\label{eq:u_abs_bound}. 
\end{align}
where $\underbar{C}, \overline{C}$ and $C$ are positive constants independent of the discretisations. We also assume that there exists $\theta>0$ such that the sequence of discretisations $\big(\mcal{T}^{(m)},\delta t^{(m)}\big)_{m\in\mbb{N}}$ satisfies the condition:
\begin{equation}
\frac{\delta t^{(m)}}{\min_{K\in\mcal{M}^{(m)}}\abs{K}}\leq\theta,\ \max_{K \in \mcal{M}^{(m)}} \frac{\text{diam}(K)^2}{|K|}\leq\theta, \; \forall m\in\mbb{N}.
\label{eq:CFL_restr}
\end{equation}
We suppose that the sequence $(\rho^{(m)}, \uu{u}^{(m)})_{m\in\mbb{N}}$ converges to $(\rho_\veps, \uu{u}_\veps)\in L^\infty(Q)^{1+d}$ as $m\rightarrow \infty$ in $L^r(Q)^{1+d}$ for $1\leq r<\infty$. Then $(\rho_\veps, \uu{u}_\veps)$ satisfies the weak formulation \eqref{eq:weak_soln_mas}-\eqref{eq:weak_soln_mom}.
\end{theorem}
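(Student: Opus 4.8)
The strategy is a Lax--Wendroff-type argument carried out separately for the discrete mass balance \eqref{eq:dis_cons_mas} and momentum balance \eqref{eq:dis_cons_mom}: I would test each equation against a smooth, compactly supported function, sum over the space-time mesh, rearrange by discrete summation by parts, and pass to the limit $m\to\infty$ using the assumed $L^r$ convergence of $(\rho^{(m)},\uu{u}^{(m)})$ together with the uniform bounds \eqref{eq:dens_abs_bound}--\eqref{eq:u_abs_bound} and the mesh regularity \eqref{eq:CFL_restr}. The semi-implicit structure is harmless in the limit, since the reconstructions of $\rho^{n}$ and $\rho^{n+1}$ share the same $L^r$ limit $\rho_\veps$ as $\dt\to0$.

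For the mass equation, fix $\psi\in C_c^\infty([0,T)\times\bar\Omega)$, multiply \eqref{eq:dis_cons_mas} by $\dt\,|K|\,\psi(t^n,\uu{x}_K)$ and sum over $K\in\M$ and $n$. An Abel summation in time turns the discrete time derivative into a Riemann sum converging to $\int_0^T\int_\Omega\rho_\veps\Dt\psi\,\dd\uu{x}\,\dd t$, plus the boundary term $-\int_\Omega\rho_0^\veps\psi(0,\cdot)\,\dd\uu{x}$, where the initialisation \eqref{eq:dis_ic} and the smoothness of $\psi$ control the latter. Reorganising the convective sum $\sum_{K}\psi(t^n,\uu{x}_K)\sum_{\s\in\E(K)}F_{\s,K}(\rho^{n+1},\uu{v}^n)$ over interior edges, I would invoke the abstract consistency results for finite-volume convection operators of \cite{GHL19, GHL22} to identify its limit as $\int_0^T\int_\Omega\rho_\veps\uu{u}_\veps\cdot\grd_x\psi\,\dd\uu{x}\,\dd t$.

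The momentum balance is treated analogously on the dual mesh, testing \eqref{eq:dis_cons_mom} against a dual interpolate of $\uu{\psi}\in C_c^\infty([0,T)\times\bar\Omega)^d$: the time derivative yields $\int\rho_\veps\uu{u}_\veps\cdot\Dt\uu{\psi}$ and the initial term (the dual density $\rho_{\Ds}$ being a convex combination of cell values sharing the limit $\rho_\veps$), the dual convection flux $\sum F_{\eps,\s}(\rho^{n+1},\uu{v}^n)u_{\eps,\mathrm{up}}^n$ produces $\int(\rho_\veps\uu{u}_\veps\otimes\uu{u}_\veps):\grd_x\uu{\psi}$ again by \cite{GHL19, GHL22}, and the pressure gradient $\tfrac1{\veps^2}(\D^{(i)}_\E p^{n+1})_\s$, after the gradient--divergence duality \eqref{eq:disc_dual}, gives the pressure term, using that $p^{(m)}=(\rho^{(m)})^\gamma\to p_\veps$ in $L^r$ by continuity on $[\underline C,\overline C]$. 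The one scheme-specific point, common to both fluxes, is that the transport velocity is the shifted $\uu{v}^n=\uu{u}^n-\delta\uu{u}(\rho^{n+1})$; since $\delta\uu{u}=\tfrac{\eta\dt}{\veps^2}\gre p^{n+1}$ carries an explicit factor $\dt$, a Cauchy--Schwarz estimate combining the smoothness of the test function, the uniform bounds, and the discrete $\ell^2$-in-time control of the pressure gradient furnished by the kinetic energy identity \eqref{eq:dis_kinbal} and Theorem~\ref{lem:dis_totbal} shows the stabilisation contributions are $O(\dt^{1/2})$ and hence vanish, so $\uu{v}^{(m)}$ and $\uu{u}^{(m)}$ share the limit $\uu{u}_\veps$.

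I expect the main obstacle to be the momentum convection term. On the MAC grid the dual mass fluxes $F_{\eps,\s}$ are only prescribed as linear combinations of the primal fluxes, and the upwind value $u_{\eps,\mathrm{up}}^n$ switches between neighbouring dual unknowns, so matching $\sum F_{\eps,\s}u_{\eps,\mathrm{up}}^n$ to $(\rho_\veps\uu{u}_\veps\otimes\uu{u}_\veps):\grd_x\uu{\psi}$ requires the careful staggered-grid bookkeeping and the flux-consistency estimates of \cite{GHL19, GHL22}; by contrast the time-derivative, initial-data and pressure contributions are comparatively routine once the duality and the convergences are in place.
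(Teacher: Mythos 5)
Your overall Lax--Wendroff skeleton (test each balance against a smooth function, sum by parts in space and time, invoke the consistency machinery of \cite{GHL19,GHL22} for the primal and dual convection operators, and treat the pressure through the discrete gradient--divergence duality) is exactly the paper's route: the paper defers all of those steps to the cited references and concentrates, as you do, on the one scheme-specific term, the residual generated by the velocity shift $\delta u_\s(\rho^{n+1})=\frac{\eta\dt}{\veps^2}(\D^{(i)}_\E p^{n+1})_\s$. The gap is in how you kill that residual. You propose to control it through the discrete $\ell^2$-in-time bound on $\bgrd_\E p$ furnished by the kinetic energy identity \eqref{eq:dis_kinbal} and Theorem~\ref{lem:dis_totbal}. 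That bound is simply not available under the hypotheses of Theorem~\ref{thm:weak_cons}: the entropy inequality \eqref{eq:dis_totbal}, and hence the pressure-gradient control of the type \eqref{eq:bdd_terms}, holds only under the stability conditions (i) $\eta\geq 1/\rho_{\Ds}^{n+1}$ and (ii) the implicit CFL restriction, and neither of these is among the assumptions of the theorem, which requires only the uniform bounds \eqref{eq:dens_abs_bound}--\eqref{eq:u_abs_bound}, the mesh regularity \eqref{eq:CFL_restr}, and $L^r$ convergence. Worse, even if one adds condition (i) as a hypothesis, the coefficient $\eta-1/\rho_{\Ds}^{n+1}$ in the entropy estimate may vanish, in which case the estimate yields no control at all on $\bgrd_\E p$; your Cauchy--Schwarz argument needs a uniform strict lower bound on this coefficient, which is nowhere assumed. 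So, as written, your proof establishes a strictly weaker, conditional statement than the theorem.

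The repair is the paper's own treatment, which uses nothing beyond the stated hypotheses: write $(\D^{(i)}_\E p^{n+1})_\s=\pm\frac{\abs{\s}}{\abs{\Ds}}\big(p^{n+1}_L-p^{n+1}_K\big)$ and use the Lipschitz bound $\abs{p^{n+1}_L-p^{n+1}_K}\leq\gamma\overline{C}^{\gamma-1}\abs{\rho^{n+1}_L-\rho^{n+1}_K}$ valid on $[\underbar{C},\overline{C}]$ by \eqref{eq:dens_abs_bound}. The residual \eqref{eq:trunc_error_stab} is then dominated by \eqref{eq:trunc_error_stab_est}, a weighted sum of density jumps in which the explicit factor $\dt$ carried by $\delta u_\s$, combined with \eqref{eq:CFL_restr}, tames the $\frac{\abs{\s}}{\abs{\Ds}}$ weight; the resulting expression tends to zero by the convergence of discrete space translates (\cite[Section 4]{GHL19}, \cite[Lemma A.1]{GHL22}), a property that follows from the assumed $L^r$ convergence of $\rho^{(m)}$ rather than from any energy estimate. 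In short: replace the entropy-based $\ell^2$ control of the pressure gradient by the $L^\infty$-based Lipschitz bound on pressure jumps plus space-translate compactness, and your argument aligns with the paper and is valid under the theorem's hypotheses. Your quantitative mechanism ($O(\dt^{1/2})$ smallness via Cauchy--Schwarz) is sound in regimes where the scheme's stability conditions do hold, but it cannot serve as the proof of this theorem as stated.
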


\begin{proof}
%
Our approach follows analogous lines as in \cite[Lemma 4.1]{GHL22} and hence we skip 
most of the calculations except the ones related to the velocity stabilisation term.
Proceeding as in \cite[Lemma 4.1]{GHL22}, the additional term $R^{(m)}$ arising from the mass update \eqref{eq:dis_cons_mas} can be obtained as
\begin{equation}
R^{(m)} =
C_\psi\sum_{n=0}^{N^{(m)}-1}\dt^n\sum_{K\in\M}\diam(K)\sum_{\s\in\E(K)}\abs{\s}\abs{\delta
  u_{\s}(\rho^{n+1})}, 
\label{eq:trunc_error_stab}
\end{equation}
where $C_\psi$ is a constant depending only on the test function $\psi \in C_c^\infty([0,T)\times\bar{\Omega})$. Using \eqref{eq:dens_abs_bound}, $R^{(m)}$ can be further estimated as
\begin{equation}
R^{(m)} \leq \gamma \overline{C}^{\gamma -
  1}C_\psi\sum_{n=0}^{N^{(m)}-1}\dt^n\sum_{K\in\M}\diam(K)\sum_{\s\in\E(K)}|\s|\eta\dt^n\frac{|\s|}{|\Ds|}\abs{\rho^{n+1}_L
  - \rho^{n+1}_K}. 
\label{eq:trunc_error_stab_est}
\end{equation}
Using the tools introduced in \cite[Section 4]{GHL19} to study the
convergence of discrete space translates, it can be shown that the
right hand side of \eqref{eq:trunc_error_stab_est} tends to $0$ as
$m\rightarrow \infty$ under the assumptions \eqref{eq:dens_abs_bound}
and \eqref{eq:CFL_restr}; see also \cite[Lemma A.1]{GHL22} for a
similar treatment on discrete space translates. 
The consistency of the discrete convection operator and the pressure
gradient in the momentum update \eqref{eq:dis_cons_mom} can be
obtained under the given assumptions using a similar method as in the
proof of \cite[Theorem 4.1]{HLN+23}. The residual term appearing due
to the velocity stabilisation in the momentum update again converges
to $0$ by a similar argument used in the case of the mass update. 
\end{proof}
Adopting similar techniques as in the proof of \cite[Theorem 5.1]{HLN+23} yields the following LW weak-entropy consistency of the velocity stabilised scheme:
\begin{theorem}
\label{thm:entr_cons}
Suppose that all the assumptions \eqref{eq:dens_abs_bound}-\eqref{eq:CFL_restr} hold true for the sequence of discrete solutions $\big(\rho^{(m)},\uu{u}^{(m)})_{m\in\mbb{N}}$ which converge to $(\rho^\veps, \uu{u}^\veps)\in L^\infty(Q)^{1+d}$ in $L^{r}(Q)^{1+d}$ for $1\leq r <\infty$. In addition, assume the following discrete BV estimates:
\begin{align}
	\sum_{n=0}^{N^{(m)} - 1}\sum_{K\in\M^{(m)}}\abs{K}\abs{(\rho^{(m)})^{n+1}_K - (\rho^{(m)})^n_K} &\leq C,\; \forall m\in \mbb{N}, \\
	\sum_{n=0}^{N^{(m)} - 1}\sum_{\s\in\E^{(m)}}\abs{\Ds}\abs{(u^{(m)})^{n+1}_\sigma - (u^{(m)})^n_\sigma} &\leq C,\; \forall m\in\mbb{N} 
\end{align}
where $C$ is a positive constant independent of the mesh parameters. Also assume that the space and time discretisations satisfy the following:
\begin{equation}
	\lim_{m\rightarrow \infty}\frac{\dt^{(m)}}{\min_{\s\in\E^{(m)}}\abs{\sigma}} = 0.
\end{equation} 
Then, the limiting terms $(\rho^\veps, \uu{u}^\veps)\in L^\infty(Q)^{1+d}$ satisfy the entropy condition:
\begin{equation}
    -\int_0^T\int_\Omega\bigg[\zeta^\veps \D_t\psi + \Big(\zeta^\veps + \frac{1}{\veps^2} p^\veps\Big)\uu{u}^\veps\cdot\bgrd \psi\bigg]\dd \uu{x} \dd t\leq \int_\Omega\zeta^\veps_0 \psi(0,x)\dd \uu{x},\; \forall \psi\in C_c^\infty([0,T)\times\bar{\Omega}),\; \psi(T)=0,\;\psi\geq 0,
\end{equation}
where $\zeta^\veps=\frac{1}{\veps^2}\Pi_\gamma(\rho^\veps) + \half\rho^\veps\absq{\uu{u}^\veps}$ and $\zeta^\veps_0 = \frac{1}{\veps^2}\Pi_\gamma(\rho^\veps_0) + \half\rho^\veps_0\absq{\uu{u}^\veps_0}$.
\end{theorem}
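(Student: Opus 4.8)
The plan is to promote the discrete total energy inequality of Theorem~\ref{lem:dis_totbal} to a Lax--Wendroff statement by testing its \emph{local} form against a non-negative smooth function and passing to the limit. First I would record, under the stability conditions (i)--(ii) of Theorem~\ref{lem:dis_totbal}, the local discrete entropy inequality obtained by combining the positive renormalisation identity \eqref{eq:dis_porenorm} on a primal cell $K$ with the kinetic energy identity \eqref{eq:dis_kinbal} on a dual cell $\Ds$. As in the proof of Theorem~\ref{lem:dis_totbal}, substituting the velocity update \eqref{eq:dis_vel_dual} into the remainder $R_{\s,\dt}^{n+1}$ and using $(a+b)^2\leq 2(a^2+b^2)$ together with the Cauchy--Schwarz inequality shows that the numerical entropy production is non-positive cell by cell: the primal part equals $-R_{K,\dt}^{n+1}$, while the dual part $-\tfrac{\eta\dt}{\veps^4}|\Ds|(\D^{(i)}_\E p^{n+1})_\s^2-R_{\s,\dt}^{n+1}$ is non-positive once $\eta\geq 1/\rho_{\Ds}^{n+1}$ and the time-step restriction hold. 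This yields a genuine cell-wise entropy inequality whose flux part approximates $(\zeta^\veps+\tfrac{1}{\veps^2}p^\veps)\uu{u}^\veps$; note that the internal-energy contribution lives on $\M$ whereas the kinetic contribution lives on the dual mesh, so the two families must be tracked separately throughout.

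Next I would multiply this local inequality by $\dt\,\psi(t^n,\uu{x}_K)\geq 0$ for the primal terms and by the corresponding dual sample of $\psi$ for the kinetic terms, and sum over all cells and time levels. Abel summation in time transfers the discrete time derivatives of $\tfrac{1}{\veps^2}\Pi_\gamma(\rho^{n+1}_K)$ and $\tfrac12\rho_{\Ds}^{n+1}(u_\s^{n+1})^2$ onto $\psi$, producing discrete analogues of $-\int_Q\zeta^\veps\D_t\psi$ together with the initial contribution $-\int_\Omega\zeta^\veps_0\psi(0,\cdot)$, while a discrete summation by parts in space transfers the convective and pressure fluxes onto $\grd\psi$. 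For the pressure terms I would invoke the gradient--divergence duality \eqref{eq:disc_dual}: after testing against $\psi$, the primal pressure-work term $p_K^{n+1}(\divM\uu{v}^n)_K$ and the dual term $v_\s^n(\D^{(i)}_\E p^{n+1})_\s$ fuse into a consistent approximation of $-\int_Q\tfrac{1}{\veps^2}p^\veps\uu{u}^\veps\cdot\grd\psi$, modulo the stabilisation shift $\delta u$. Denoting by $T_m$ the resulting tested sum, the cell-wise inequality gives $T_m\leq 0$ for every $m$.

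The crux is to show that $T_m$ converges to $-\int_Q\zeta^\veps\D_t\psi-\int_Q(\zeta^\veps+\tfrac{1}{\veps^2}p^\veps)\uu{u}^\veps\cdot\grd\psi-\int_\Omega\zeta^\veps_0\psi(0,\cdot)$, i.e.\ that all consistency errors vanish. The Taylor remainders produced by replacing $\psi$ by its cell samples are handled exactly as in Theorem~\ref{thm:weak_cons} and \cite{GHL19,GHL22}, using the uniform bounds \eqref{eq:dens_abs_bound}--\eqref{eq:u_abs_bound} and the mesh-regularity condition \eqref{eq:CFL_restr}. The numerical diffusion hidden in the upwind choices for the convective entropy flux must be removed; this is where the discrete $\BV$ hypotheses on $\rho^{(m)}$ and $u^{(m)}$ enter, bounding error terms of the form $\sum|\s|\,|\rho^{n+1}_L-\rho^{n+1}_K|$ and $\sum|F_{\eps,\s}^-|\,|u_\s^n-u_{\s^\prime}^n|$ and forcing them to $0$ as $h^{(m)}\to 0$. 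The residual generated by the velocity stabilisation $\delta u_\s(\rho^{n+1})=\tfrac{\eta\dt}{\veps^2}(\D^{(i)}_\E p^{n+1})_\s$ is estimated precisely as in \eqref{eq:trunc_error_stab}--\eqref{eq:trunc_error_stab_est}, the factor $|\s|/|\Ds|$ together with the density $\BV$ bound and the discrete space-translate technique of \cite[Section~4]{GHL19} driving it to zero; the additional requirement $\dt^{(m)}/\min_{\s}|\s|\to 0$ is what controls the time-consistency of the dual kinetic and entropy-flux terms. I expect this error analysis, and in particular the simultaneous treatment of the primal and dual contributions so that they fuse into the single continuous flux $(\zeta^\veps+\tfrac{1}{\veps^2}p^\veps)\uu{u}^\veps$, to be the main obstacle.

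Finally, having shown $T_m\to -\int_Q\zeta^\veps\D_t\psi-\int_Q(\zeta^\veps+\tfrac{1}{\veps^2}p^\veps)\uu{u}^\veps\cdot\grd\psi-\int_\Omega\zeta^\veps_0\psi(0,\cdot)$ while $T_m\leq 0$ for all $m$, passing to the limit $m\to\infty$ forces this limit to be $\leq 0$, which is precisely the asserted weak-entropy inequality. I would stress that, in contrast to the weak consistency of the mass and momentum equations in Theorem~\ref{thm:weak_cons}, the numerical entropy production here is \emph{not} required to vanish: its non-positivity is exactly what survives the limit to yield an inequality rather than an equality.
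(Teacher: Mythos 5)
The paper offers no real proof of this theorem: it is stated after a single sentence deferring to \cite[Theorem 5.1]{HLN+23}, so the comparison can only be against the standard Lax--Wendroff entropy-consistency argument that citation refers to. Your overall architecture --- local discrete entropy inequality from \eqref{eq:dis_porenorm} plus \eqref{eq:dis_kinbal}, testing against $\psi\geq 0$, Abel summation in time and discrete summation by parts in space, fusing the primal pressure-work term and the dual pressure-gradient term through the duality \eqref{eq:disc_dual}, and keeping only an inequality because the signed numerical dissipation need not vanish --- is exactly that route, and your closing observation about the inequality surviving the limit is correct.

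There are, however, two concrete problems in how you deploy the hypotheses. First, you obtain the cell-wise sign of the entropy production by \emph{assuming} conditions (i)--(ii) of Theorem~\ref{lem:dis_totbal}, which are not among the hypotheses of Theorem~\ref{thm:entr_cons}; as written, your proof establishes a statement with extra assumptions. This is repairable, but the repair has to be argued: under the uniform bounds \eqref{eq:dens_abs_bound}--\eqref{eq:u_abs_bound}, mesh regularity, and the hypothesis $\dt^{(m)}/\min_\s\abs{\s}\to 0$, one has $\frac{\dt}{\abs{\Ds}}\sum_{\eps}\abs{F_{\eps,\s}}/\rho^{n+1}_{\Ds}\leq C\,\dt^{(m)}/\min_\s\abs{\s}\to 0$, so the CFL condition (ii) (and the sufficient condition \eqref{eq:suftime}, hence \eqref{eq:rho_3/2} and condition (i) for the paper's choice of $\eta$) holds automatically for all $m$ large enough; without some such argument the key anti-dissipative remainder $\frac{\abs{\Ds}}{2\dt}\rho^{n+1}_{\Ds}(u^{n+1}_\s-u^n_\s)^2$ is uncontrolled, since the stated BV and $\dt/h$ hypotheses alone do not make it vanish (the obvious estimate gives a bound of order $C_{\mathrm{BV}}/\min_\s\abs{\s}$, which blows up). Second, you assign the wrong job to the discrete BV hypotheses: they are \emph{time}-BV estimates (sums of $\abs{\rho^{n+1}_K-\rho^n_K}$ and $\abs{u^{n+1}_\s-u^n_\s}$), so they cannot bound the spatial-difference error terms $\sum\abs{\s}\,\abs{\rho^{n+1}_L-\rho^{n+1}_K}$ and $\sum\abs{F^-_{\eps,\s}}\,\abs{u^n_\s-u^n_{\s'}}$ that you invoke them for. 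Those spatial terms are handled, exactly as in Theorem~\ref{thm:weak_cons}, by the discrete space-translate technique of \cite{GHL19,GHL22} using the assumed $L^r$ convergence; the genuine role of the time-BV bounds is to control the time-level mismatches inherent in the semi-implicit entropy fluxes (terms coupling $\rho^{n+1}$, $p^{n+1}$ with $u^n$), since $\sum_n\dt\sum_K\abs{K}\abs{\rho^{n+1}_K-\rho^n_K}\leq \dt\, C_{\mathrm{BV}}\to 0$ shows discrete time translates converge to the same limit. Until these two points are fixed, the proposal has the right skeleton but does not close under the theorem's stated assumptions.
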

\section{Consistency of the Scheme with the Zero Mach Number Limit} 
\label{sec:cons_incomp}

In the present section, we derive the zero Mach number limit of the
semi-implicit scheme \eqref{eq:dis_cons_mas}-\eqref{eq:dis_cons_mom}
as done in the continuous case in Section~\ref{sec:cont-case}. We show
that the $\veps\to0$ limit of the scheme
\eqref{eq:dis_cons_mas}-\eqref{eq:dis_cons_mom} is a semi-implicit
scheme for the velocity stabilised incompressible Euler system
\eqref{eq:stab_incomp_eul_divfree}-\eqref{eq:stab_incomp_eul_pres}. In
order to establish this asymptotic consistency result, we use the
discrete energy estimates obtained from \eqref{eq:dis_totbal} and
prove the convergence of the numerical solution in the discrete
function spaces. We start by proving the following Lemma which gives
the entropy inequality for the scheme
\eqref{eq:dis_cons_mas}-\eqref{eq:dis_cons_mom}.   
\begin{lemma}
\label{lem:dis_ent}
Suppose that the initial data $(\rho_{0}^{\veps},\uu{u}_{0}^{\veps})$ satisfy the ill-prepared condition \eqref{def:ill_prep_id}. Then, there exists a constant $C>0$, independent of $\veps$, such that the numerical solution $(\rho^n,\uu{u}^n)_{0\leq n\leq N}$ satisfies the entropy inequality:
\begin{equation}
\label{eq:bdd_terms}
    \frac{1}{2}\sum_{\s\in\mcal{E}_\intr}|\Ds|\rho_{\Ds}^{n}|\uu{u}_{\s}^{n}|^2+\frac{1}{\veps^2}\sum_{K\in\mcal{M}}|K|\Pi_{\gm}(\rho_{K}^{n})+\frac{(\dt)^2}{\veps^4}\sum_{k=0}^{n-1}\sum_{\s\in\mcal{E}_\intr}\left(\eta-\frac{1}{\rho_{\Ds}^{k+1}}\right)|(\grd_{\mcal{E}}p^{k+1})_{\s}|^2\leq C.
\end{equation}
\end{lemma}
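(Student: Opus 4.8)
The plan is to obtain \eqref{eq:bdd_terms} by telescoping the per-step total energy balance and then absorbing the initial energy into an $\veps$-independent constant, using that the data are ill-prepared. First I would apply Theorem~\ref{lem:dis_totbal}: under the stability conditions~(i)--(ii) the right-hand side of \eqref{eq:dis_totenbal_press} is non-positive. Discarding it and multiplying through by $\dt$ yields, for each $0\le k\le n-1$, the one-step inequality
\begin{equation*}
\frac{1}{\veps^2}\sum_{K\in\M}|K|\big(\Pi_\gamma(\rho_K^{k+1})-\Pi_\gamma(\rho_K^{k})\big)
+\sum_{\s\in\E_\intr}\frac{|\Ds|}{2}\big(\rho_{\Ds}^{k+1}(u_\s^{k+1})^2-\rho_{\Ds}^{k}(u_\s^k)^2\big)
+\frac{\dt^2}{\veps^4}\sum_{\s\in\E_\intr}\Big(\eta-\tfrac{1}{\rho_{\Ds}^{k+1}}\Big)\big|(\grd_{\mcal{E}}p^{k+1})_\s\big|^2\le 0,
\end{equation*}
where I have used that the discrete gradient $(\grd_{\mcal{E}}p^{k+1})_\s$ at an edge $\s\in\E_\intr^{(i)}$ has only its $i$-th component nonzero, so that $\sum_{i}\sum_{\s\in\E_\intr^{(i)}}(\D_\E^{(i)}p^{k+1})_\s^2=\sum_{\s\in\E_\intr}|(\grd_{\mcal{E}}p^{k+1})_\s|^2$. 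Summing over $k=0,\dots,n-1$ telescopes the first two groups and produces exactly the left-hand side of \eqref{eq:bdd_terms}, bounded above by the initial energy $\frac{1}{\veps^2}\sum_{K}|K|\Pi_\gamma(\rho_K^0)+\tfrac12\sum_{\s}|\Ds|\rho_{\Ds}^0(u_\s^0)^2$. It then suffices to bound this initial energy independently of $\veps$.

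For the kinetic part I would use the initialisation \eqref{eq:dis_ic} together with Jensen's inequality on the cell averages: since each $u_\s^0$ is the average of $(\uu{u}_0^\veps)_i$ over $\Ds$, each $\rho_{\Ds}^0$ is a convex combination of the $\rho_K^0$, and the dual cells of a fixed direction tile $\Omega$, one gets $\tfrac12\sum_\s|\Ds|\rho_{\Ds}^0(u_\s^0)^2\le\tfrac12\norm{\rho_0^\veps}_{L^\infty(\Omega)}\norm{\uu{u}_0^\veps}_{L^2(\Omega)^d}^2$, which is bounded uniformly in $\veps$ by Definition~\ref{def:ill_prep_id} (the $L^2$-bound on $\uu{u}_0^\veps$, and the fact that $\rho_0^\veps-1=\mcal{O}(\veps)$ forces $\norm{\rho_0^\veps}_{L^\infty(\Omega)}$ to remain bounded). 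The delicate term is the scaled internal energy $\frac{1}{\veps^2}\sum_K|K|\Pi_\gamma(\rho_K^0)$. Here I would exploit that $\Pi_\gamma$ is the relative internal energy at $\rho=1$, so $\Pi_\gamma(1)=0$, $\Pi_\gamma'(1)=\psi_\gamma'(1)-\psi_\gamma'(1)=0$ and $\Pi_\gamma''(1)=\psi_\gamma''(1)>0$; a second-order Taylor expansion then gives $\Pi_\gamma(\rho)\le C(\rho-1)^2$ for all $\rho$ in a fixed neighbourhood of $1$. By Lemma~\ref{lem:dis_ill_prep_est} we have $|\rho_K^0-1|\le C\veps$ for every $K$, so the $\rho_K^0$ lie in such a neighbourhood for $\veps$ small, whence
\begin{equation*}
\frac{1}{\veps^2}\sum_{K\in\M}|K|\Pi_\gamma(\rho_K^0)\le\frac{C}{\veps^2}\sum_{K\in\M}|K|(\rho_K^0-1)^2\le\frac{C|\Omega|}{\veps^2}\max_{K\in\M}|\rho_K^0-1|^2\le C|\Omega|,
\end{equation*}
which is the required $\veps$-independent bound. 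Adding the two contributions gives \eqref{eq:bdd_terms}.

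The main obstacle is precisely the control of this scaled relative internal energy: the prefactor $1/\veps^2$ is singular, and it is compensated only by the combination of the quadratic vanishing of $\Pi_\gamma$ at the constant state $\rho=1$ with the $\mcal{O}(\veps)$ closeness of the discrete initial density to $1$ guaranteed by the ill-prepared assumption through Lemma~\ref{lem:dis_ill_prep_est}. A minor technical point is to ensure the $\rho_K^0$ remain in the neighbourhood where the quadratic Taylor bound on $\Pi_\gamma$ holds; this is automatic for all sufficiently small $\veps$, and the finitely many remaining values $\veps\in(0,1]$ can be absorbed into the constant $C$.
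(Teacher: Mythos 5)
Your proposal is correct and follows essentially the same route as the paper: multiply the total energy balance \eqref{eq:dis_totenbal_press} by $\dt$, sum (telescope) over $k=0,\dots,n-1$, and bound the resulting initial energy using the ill-prepared data estimate \eqref{eq:dis_ill_prep_est} together with the quadratic bounds on $\Pi_\gamma$ near $\rho=1$ (which is exactly what the paper invokes from \cite[Lemma 2.3]{HLS21}); your proof merely makes these cited steps explicit. One cosmetic slip: the values $\veps\in[\veps_0,1]$ not covered by the small-$\veps$ Taylor argument are not ``finitely many,'' but the absorption still works there since $1/\veps^2\leq 1/\veps_0^2$ and $\Pi_\gamma(\rho_K^0)$ is bounded because the initial densities lie in a fixed bounded set.
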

\begin{proof}
First, we multiply the total energy balance
\eqref{eq:dis_totenbal_press} by $\dt$ and sum over times $k$ from $0$
to $n-1$. Using the estimate \eqref{eq:dis_ill_prep_est} coming 
from the ill-prepared data \eqref{def:ill_prep_id} and the bounds
on $\Pi_\gamma$ \cite[Lemma 2.3]{HLS21} we can obtain the required
result.   
\end{proof}
In the following, we establish claims similar to the ones in
\cite[Proposition 2.6]{HLS21} for the discrete solution.   
\begin{lemma}
\label{lem:disc_conv_bdd}
Let $\mcal{T}=(\mcal{M},\mcal{E})$ be a fixed primal-dual mesh pair that gives a MAC discretisation of $\bar{\Omega}$ and let $\delta t>0$ be such that $\{0=t^0<t^1<\cdots<t^N=T\}$ is a discretisation of the time domain $[0,T]$ with  $t^n = n\dt$, for $1\leq n\leq N = \lfloor\frac{T}{\delta t}\rfloor$. For each $\veps>0$, let us denote by $(\rho^\veps,\uu{u}^\veps)\in L^\infty(0,T;L_{\mcal{M}}(\Omega)\times\uu{H}_{\mcal{E},0}(\Omega))$, the discrete solution to the semi-implicit scheme \eqref{eq:dis_cons_mas}-\eqref{eq:dis_cons_mom} with respect to the  ill-prepared initial data discretised via \eqref{eq:dis_ic}. Then, the following holds.
\begin{enumerate}[label=(\roman*)]
\item For $\gamma\geq 2$, we have for all $\veps>0$ sufficiently small
\begin{equation}
\label{eq:disc_dens_est_gamma_big}
\frac{1}{\veps}\norm{\rho^{\veps}-1}_{L^\infty(0,T;L^2(\Omega))}\leq C(\gamma).
\end{equation}
Also for $\gamma\in(1,2)$, we have for all $\veps>0$ sufficiently small, and for any $R\in(2,+\infty)$
\begin{align}
\frac{1}{\veps}\norm{(\rho^{\veps}-1)\mcal{X}_{\{\rho^{\veps}<R\}}}_{L^\infty(0,T;L^2(\Omega))}&\leq C(\gamma,R),\label{eq:disc_dens_est_gamma_ess} \\
{\veps}^{-\frac{2}{\gamma}}\norm{(\rho^{\veps}-1)\mcal{X}_{\{\rho^\veps\geq R\}}}_{L^\infty(0,T;L^\gamma(\Omega))}&\leq C(\gamma,R).\label{eq:disc_dens_est_gamma_res}
\end{align}
Here $C(\gamma)$ and $C(\gamma,R)$ are positive constants independent of $\veps$.
\item $\rho^\veps\rightarrow 1$ as $\veps\rightarrow 0$ in $L^\infty(0,T;L^r(\Omega))$ for any $r\in(1,\min\{2,\gamma\}]$. Moreover, for any $\gamma>1$ $\rho^\veps\rightarrow 1$ as $\veps\rightarrow 0$ in $L^\infty(0,T;L^\gamma(\Omega))$.
\item The sequence of approximate solutions for the velocity component $(\uu{u}^\veps)_{\veps>0}$ is uniformly bounded with respect to $\veps$, i.e.\ for all $\veps>0$ sufficiently small we have
\begin{equation}
\label{eq:disc_vel_bound}
\norm{\uu{u}^\veps}_{L^\infty(0,T;L^2(\Omega)^d)}\leq C,
\end{equation}
where $C>0$ is a constant independent of $\veps$.
\end{enumerate}
\end{lemma}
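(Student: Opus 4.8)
The plan is to extract everything from the single uniform (in $\veps$) bound provided by Lemma~\ref{lem:dis_ent}. The inequality \eqref{eq:bdd_terms} controls, simultaneously and uniformly in $\veps$, the scaled relative internal energy $\frac{1}{\veps^2}\sum_{K\in\M}|K|\Pi_\gamma(\rho_K^n)$ and the kinetic energy $\frac{1}{2}\sum_{\s\in\E_\intr}|\Ds|\rho_{\Ds}^n|\uu{u}_\s^n|^2$ at every time level $n$. The three claims are then discrete counterparts of \cite[Proposition 2.6]{HLS21}: part (i) follows by inserting convexity lower bounds for $\Pi_\gamma$ into the energy bound, part (ii) follows from (i) together with H\"older's inequality on the bounded domain $\Omega$, and part (iii) follows by removing the density weight from the kinetic energy once a uniform positivity of $\rho^\veps$ has been secured.

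For part (i), I would invoke the standard convexity estimates for the relative internal energy $\Pi_\gamma$ recalled in \cite[Lemma 2.3]{HLS21}. When $\gamma\geq 2$ the function $\Pi_\gamma$ admits a global quadratic lower bound $\Pi_\gamma(\rho)\geq c(\gamma)(\rho-1)^2$ valid for all $\rho>0$; substituting this into $\frac{1}{\veps^2}\sum_{K\in\M}|K|\Pi_\gamma(\rho_K^n)\leq C$ yields $\frac{1}{\veps^2}\sum_{K\in\M}|K|(\rho_K^n-1)^2\leq C$, which is precisely \eqref{eq:disc_dens_est_gamma_big}. When $\gamma\in(1,2)$ the quadratic control degenerates for large densities, so I would split the cells according to $\{\rho^\veps<R\}$ and $\{\rho^\veps\geq R\}$. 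On the former, $\psi_\gamma''$ is bounded below on $(0,R)$ and hence $\Pi_\gamma(\rho)\geq c_1(\gamma,R)(\rho-1)^2$, giving \eqref{eq:disc_dens_est_gamma_ess}; on the latter, the leading behaviour $\Pi_\gamma(\rho)\sim \rho^\gamma/(\gamma-1)$ furnishes $\Pi_\gamma(\rho)\geq c_2(\gamma,R)|\rho-1|^\gamma$, giving \eqref{eq:disc_dens_est_gamma_res} with the exponent $\veps^{-2/\gamma}$.

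For part (ii), I would decompose $\|\rho^\veps-1\|_{L^\gamma}^\gamma$ over the same two cell families. On $\{\rho^\veps\geq R\}$ estimate \eqref{eq:disc_dens_est_gamma_res} (and, for $\gamma\geq 2$, the analogous superquadratic bound on $\Pi_\gamma$) shows this contribution is $O(\veps^2)$. On $\{\rho^\veps<R\}$ the density is bounded, so for $\gamma\in(1,2)$ H\"older's inequality bounds the $L^\gamma$ contribution by a power of the scaled $L^2$ estimate \eqref{eq:disc_dens_est_gamma_ess}, while for $\gamma\geq 2$ the factor $|\rho^\veps-1|^{\gamma-2}$ is bounded on this set and \eqref{eq:disc_dens_est_gamma_big} applies directly; in every case the contribution is $o(1)$. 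This yields $\rho^\veps\to 1$ in $L^\infty(0,T;L^\gamma(\Omega))$ for all $\gamma>1$, and the convergence in $L^r$ for $r\in(1,\min\{2,\gamma\}]$ then follows from the continuous embedding $L^{\min\{2,\gamma\}}(\Omega)\hookrightarrow L^r(\Omega)$ on the bounded domain $\Omega$.

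Part (iii) is where the real work lies, and it is the step I expect to be the main obstacle. The energy bound only controls the \emph{weighted} kinetic energy $\sum_{\s\in\E_\intr}|\Ds|\rho_{\Ds}^n|\uu{u}_\s^n|^2$, so to deduce \eqref{eq:disc_vel_bound} I must bound $1/\rho_{\Ds}^n$ uniformly in $\veps$. Here I would exploit crucially that the mesh $\mcal{T}$ and the time step are \emph{fixed}: the domain splits into finitely many cells of size at least $m:=\min_{K\in\M}|K|>0$. Consequently the scaled $L^2$ estimate of part (i) upgrades to a uniform cellwise bound $|\rho_K^\veps(t^n)-1|\leq C\veps/\sqrt{m}$ (for $\gamma\geq 2$ directly from \eqref{eq:disc_dens_est_gamma_big}, and for $\gamma\in(1,2)$ from \eqref{eq:disc_dens_est_gamma_ess} on cells with $\rho_K^\veps<R$, the remaining cells having $\rho_K^\veps\geq R>2$ trivially). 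Hence there exists $\veps_0>0$ such that $\rho_K^\veps(t^n)>\tfrac12$ for all $K$, all $n$ and all $\veps<\veps_0$; since $\rho_{\Ds}^n$ is a convex combination of $\rho_K^n$ and $\rho_L^n$ by \eqref{eq:mass_dual}, the same lower bound holds on the dual cells. Stripping the weight then gives $\sum_{\s\in\E_\intr}|\Ds||\uu{u}_\s^n|^2\leq 2\sum_{\s\in\E_\intr}|\Ds|\rho_{\Ds}^n|\uu{u}_\s^n|^2\leq C$ uniformly in $\veps$, which is exactly \eqref{eq:disc_vel_bound}.
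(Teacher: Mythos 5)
Your proposal is correct and follows essentially the same route as the paper: everything is extracted from the discrete entropy estimate \eqref{eq:bdd_terms}, with the convexity bounds on $\Pi_\gamma$ (the content of the cited \cite[Lemma 2.3, Proposition 2.6]{HLS21}) giving parts (i)--(ii), and the fixed-mesh argument upgrading the density estimates to a uniform cellwise lower bound $\rho^\veps>\rho_{\min}$ so that the weight can be stripped from the kinetic energy term for part (iii). Your treatment is merely more explicit than the paper's (which defers the $\Pi_\gamma$ estimates to \cite{HLS21} and states the pointwise convergence $\rho^n_K\to1$ without the quantitative bound $C\veps/\sqrt{m}$ or the explicit convex-combination step for $\rho_{D_\sigma}$ via \eqref{eq:mass_dual}), but the underlying argument is identical.
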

\begin{proof}
The estimates for the point-values of the weak solutions derived in \cite[Lemma 2.3]{HLS21} make use of the bounds on the relative entropy functional $\Pi_\gamma$ which leads to the convergence results \cite[Proposition 2.6]{HLS21}. In an analogous manner, we can adopt the same results for the discrete solutions of the present scheme and use the estimate \eqref{eq:bdd_terms} to obtain the above convergence results for $\rho^\veps$.
Next, we proceed to derive the estimate \eqref{eq:disc_vel_bound} on
the velocity. To this end, we first note that for each
$n\in\{1,\dots,N\}$, and for each $K\in\mcal{M}$, we have
$\rho^n_K\rightarrow 1$ as $\veps\rightarrow 0$. Since there exists
$0<\rho_{\min}<1$, we can further conclude that for all $\veps>0$
sufficiently small, 
\begin{equation}
\label{eq:disc_dens_lb}
\rho^n_K>\rho_{\mathrm{min}}, \ \forall n\in\{0,\dots,N\}, \; \forall K\in \M.
\end{equation}
From the discrete entropy estimate \eqref{eq:bdd_terms},  the bounds
on $\Pi_\gamma$ \cite[Lemma 2.3]{HLS21} and the estimate
\eqref{eq:dis_ill_prep_est} on ill-prepared initial data we get
\begin{equation}
\label{eq:disc_vel_bound_a}
\sum_{i=1}^d\sum_{\sigma\in\mcal{E}^{(i)}_\intr}\rho^{n+1}_{D_\sigma}\absq{u^{n+1}_\sigma}\leq C, \ \forall n\in\{0,\dots,N-1\}.
\end{equation}
From \eqref{eq:disc_dens_lb} and \eqref{eq:disc_vel_bound_a} we
subsequently obtain the required estimate \eqref{eq:disc_vel_bound}.  
\end{proof}

It is well known that an asymptotic analysis of the Euler equations
reveals a pressure decomposition $p=p_{(0)}+\veps^2\pi$ in the low
Mach number regime, where the second order pressure $\pi$ survives as
the incompressible pressure in the limit $\veps\to0$. In the discrete
counterpart, in order that a scheme to be consistent with the
incompressible limit, it is therefore essential for the numerical
pressure to have the same decomposition and its second order to
survive in the limit $\veps\to0$. In the following lemma, we give an
estimate on the second order pressure, using the uniform boundedness
of the pressure gradient $\bgrd_\E p^n$ appearing in the discrete
entropy estimate \eqref{eq:bdd_terms}. The following estimate is a
consequence of the inf-sup stability of the MAC discretisation, cf.\
Lemma~\ref{lem:inf_sup}. 
\begin{lemma}[Bound on the second order pressure]
\label{lem:bdd_press}
Let $\veps>0$ and $(\rho^{n},\uu{u}^{n})_{0\leq n\leq N}$ be the solution to the scheme \eqref{eq:dis_cons_mas}-\eqref{eq:dis_cons_mom}. Define $\pi^n=\sum_{K\in\mcal{M}}\pi^n_K\mcal{X}_{K}$, where $\pi^n_K=\frac{p^{n}_K-m(p^n)}{\veps^2}$ with $m(p^n)$ being the mean value of $p^n$ over $\Omega$. Then, there exists a constant $C_{\mcal{T},\dt}$, independent of $\veps$, such that for $0\leq n\leq N$
\begin{equation}
\label{eq:bdd_press}
    \norm{\pi^n}\leq C_{\mcal{T},\dt},
\end{equation}
where $\norm{\cdot}$ is any norm on the discrete function space.
\end{lemma}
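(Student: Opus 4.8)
The plan is to control $\pi^n$ through its discrete gradient and then invoke the inf-sup stability of Lemma~\ref{lem:inf_sup}. Two elementary observations drive everything: since $m(p^n)$ is a constant and the discrete gradient annihilates constants, $\bgrd_\E\pi^n=\frac{1}{\veps^2}\bgrd_\E p^n$; and $\pi^n$ has zero mean, $m(\pi^n)=0$, so $\norm{\pi^n-m(\pi^n)}_{L^2(\Omega)}=\norm{\pi^n}_{L^2(\Omega)}$. It therefore suffices to (a) extract a bound on $\norm{\bgrd_\E\pi^n}_{L^2(\Omega)}$, uniform in $\veps$, from the discrete entropy estimate \eqref{eq:bdd_terms}, and (b) convert this gradient bound into a bound on $\norm{\pi^n}_{L^2(\Omega)}$ via the inf-sup inequality and the gradient-divergence duality \eqref{eq:disc_dual}.

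For step (a) I would fix $n\geq 1$ and keep in the dissipation sum of \eqref{eq:bdd_terms} only the single index $k=n-1$; this summand is non-negative by the stability condition (i) of Theorem~\ref{lem:dis_totbal}, so it is dominated by the whole sum, giving
\begin{equation*}
(\dt)^2\sum_{\s\in\mcal{E}_\intr}\Big(\eta-\frac{1}{\rho_{\Ds}^{n}}\Big)\abs{(\bgrd_\E\pi^n)_\s}^2=\frac{(\dt)^2}{\veps^4}\sum_{\s\in\mcal{E}_\intr}\Big(\eta-\frac{1}{\rho_{\Ds}^{n}}\Big)\abs{(\bgrd_\E p^n)_\s}^2\leq C.
\end{equation*}
The delicate point is to bound the weight $\eta-1/\rho_{\Ds}^{n}$ below by a positive constant independent of $\veps$. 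With the explicit choice $\eta=\eta_1/\rho_{\Ds}^{n-1}$, $\eta_1>\frac{3}{2}$, from the remark after Theorem~\ref{lem:dis_totbal}, this weight equals $\eta_1/\rho_{\Ds}^{n-1}-1/\rho_{\Ds}^{n}$. Since the mesh and the number of time levels $N=\lfloor T/\dt\rfloor$ are fixed, the cell-wise convergence $\rho_{\Ds}^{k}\to 1$ as $\veps\to 0$, a consequence of Lemma~\ref{lem:disc_conv_bdd} and the lower bound \eqref{eq:disc_dens_lb}, shows that this weight tends to $\eta_1-1>\frac{1}{2}$ uniformly over $\s$ and over $1\leq n\leq N$. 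Hence there are $\veps_0>0$ and $c_0>0$ with weight $\geq c_0$ whenever $\veps<\veps_0$; after bounding the weight below we control $\sum_{\s}\abs{(\bgrd_\E\pi^n)_\s}^2$, which on the fixed mesh is comparable to $\norm{\bgrd_\E\pi^n}_{L^2(\Omega)}^2$ up to $\mcal{T}$-dependent factors, and we conclude $\norm{\bgrd_\E\pi^n}_{L^2(\Omega)}\leq C_{\mcal{T},\dt}$.

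For step (b), apply Lemma~\ref{lem:inf_sup} to $p=\pi^n$ to obtain $\uu{u}\in\Hez$ with $\norm{\uu{u}}_{1,\mcal{T}}=1$ and, since $m(\pi^n)=0$,
\begin{equation*}
\beta\norm{\pi^n}_{L^2(\Omega)}\leq\sum_{K\in\M}\abs{K}\,\pi^n_K(\divM\uu{u})_K=-\int_\Omega\bgrd_\E\pi^n\cdot\uu{u}\,\dd\uu{x}\leq\norm{\bgrd_\E\pi^n}_{L^2(\Omega)}\norm{\uu{u}}_{L^2(\Omega)},
\end{equation*}
where the middle equality is the duality \eqref{eq:disc_dual} and the final step is Cauchy-Schwarz. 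A discrete Poincar\'e inequality then gives $\norm{\uu{u}}_{L^2(\Omega)}\leq C_P\norm{\uu{u}}_{1,\mcal{T}}=C_P$, so that $\norm{\pi^n}_{L^2(\Omega)}\leq(C_P/\beta)\norm{\bgrd_\E\pi^n}_{L^2(\Omega)}\leq C_{\mcal{T},\dt}$. Finally, $\Lm$ being finite-dimensional for the fixed mesh, all its norms are equivalent, so the bound extends to an arbitrary norm with the constant absorbing the (mesh-dependent) equivalence factors.

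The hard part is step (a): the dissipation weight $\eta-1/\rho_{\Ds}^{n}$ is degenerate, so the entropy estimate only controls a weighted gradient, and without a uniform-in-$\veps$ lower bound on this weight it yields no usable control of $\bgrd_\E\pi^n$. This is precisely where the specific stabilisation parameter $\eta=\eta_1/\rho_{\Ds}^{n-1}$ with $\eta_1>\frac{3}{2}$, together with the density convergence $\rho_{\Ds}^k\to 1$, enters in an essential way. One caveat worth flagging: for $n=0$ the dissipation sum in \eqref{eq:bdd_terms} is empty, so this argument gives nothing and $\norm{\pi^0}$ must be read directly off the discretised initial data; for genuinely ill-prepared data, where only $\rho^0-1=\mcal{O}(\veps)$ is available (cf.\ Lemma~\ref{lem:dis_ill_prep_est}), the estimate is therefore to be understood for $1\leq n\leq N$.
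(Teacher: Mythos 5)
Your proof follows essentially the same route the paper itself takes (the paper defers the details to \cite[Lemma 7.9]{HLS21}): extract a uniform-in-$\veps$ bound on $\bgrd_\E\pi^n$ from the weighted dissipation term in the entropy estimate \eqref{eq:bdd_terms}, then convert it into an $L^2$ bound on the mean-free $\pi^n$ via the inf-sup stability of Lemma~\ref{lem:inf_sup} together with the gradient-divergence duality \eqref{eq:disc_dual}. Your two added observations — the explicit lower bound on the degenerate weight $\eta-1/\rho_{\Ds}^{n}$ obtained from the choice $\eta=\eta_1/\rho_{\Ds}^{n-1}$, $\eta_1>\tfrac{3}{2}$, and the cell-wise convergence $\rho_{\Ds}^{k}\to1$, and the caveat that the argument only covers $1\leq n\leq N$ because the dissipation sum in \eqref{eq:bdd_terms} is empty at $n=0$ (so that for genuinely ill-prepared data the bound at $n=0$ cannot hold) — are correct refinements of what the paper leaves implicit.
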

\begin{proof}
For a proof we refer to \cite[Lemma 7.9]{HLS21}, wherein the estimate
is obtained for a staggered discretisation. Using the uniform
boundedness of the discrete pressure gradient obtained from the
estimate \eqref{eq:bdd_terms}, a similar procedure can be adapted for
the MAC discretisation to yield \eqref{eq:bdd_press}. 
\end{proof}
As a consequence of the convergence $\rho^\veps\to1$, the uniform
bounds obtained in Lemma~\ref{lem:disc_conv_bdd}, and the bound on the
second order pressure obtained in Lemma~\ref{lem:bdd_press}, we deduce
the following theorem which gives a semi-implicit scheme for the
incompressible Euler system with velocity stabilisation as the limit
of the semi-implicit scheme
\eqref{eq:dis_cons_mas}-\eqref{eq:dis_cons_mom} when $\veps\to0$. 
\begin{theorem}
Let $(\veps^{(k)})_{k\in \mathbb{N}}$ be a sequence of positive
numbers converging to zero,
$(\rho^{(k)},\uu{u}^{(k)})_{k\in\mathbb{N}}$ be the corresponding
sequence of numerical solutions obtained from the scheme
\eqref{eq:dis_cons_mas}-\eqref{eq:dis_cons_mom}, and let the initial
data $(\rho^{(k)}_{0},\uu{u}^{(k)}_{0})$ satisfy
\eqref{def:ill_prep_id}. Then $(\rho^{(k)})_{k\in\mathbb{N}}$
converges to $1$ in $L^{\infty}(0,T;L^{\gm}(\Omega))$ and
$(\uu{u}^{(k)},\pi^{(k)})_{k\in\mathbb{N}}$ converges to
$(\uu{u},\pi)\in L^\infty(0,T; \uu{H}_{\E,0}(\Omega)\times
L_\M(\Omega))$ in any discrete norm when $k$ tends to $\infty$, where
the sequence $(\uu{u}^n,\pi^n)$ is defined as follows. Given
$(\uu{u}^n, \pi^n)\in \uu{H}_{\E,0}(\Omega)\times L_\M(\Omega)$ at
time $t^n$, $(\uu{u}^{n+1}, \pi^{n+1})\in\uu{H}_{\E,0}(\Omega)\times
L_\M(\Omega)$ is obtained as the solution of the following
semi-implicit scheme: 
\begin{gather}
    (\dive_\M(\uu{u}^n-\delta\uu{u}(\pi^{n+1})))_{K}=0,\; \forall K \in \mcal{M}, \label{eq:dis_incomp_mas}\\
    \frac{1}{\dt}\big(u_\s^{n+1}-u_\s^{n}\big)+\frac{1}{\left|\Ds\right|}\sum_{\epsilon\in\tilde{\E}(\Ds)}F_{\epsilon,\sigma}(1,\uu{u}^{n}-\delta\uu{u}(\pi^{n+1}))u_{\eps,\mathrm{up}}^{n}+(\partial^{(i)}_{\E}\pi^{n+1})_{\s}=0, \ 1\leq i\leq d, \ \forall \s\in\E_\intr^{(i)}, \label{eq:dis_incomp_mom}
\end{gather}
where the correction $(\delta u(\pi^{n+1}))_\s=\eta\dt(\D_\E^{(i)}\pi^{n+1})_\s$ for $\s\in\Eint^{(i)}$, $i=1,2,\dots d$.
\end{theorem}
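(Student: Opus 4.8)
The plan is to work on the fixed mesh $\mcal{T}$ with the fixed time-step $\dt$, so that the discrete spaces $L_\M(\Omega)$ and $\uu{H}_{\E,0}(\Omega)$ are finite dimensional; consequently all discrete norms are equivalent and the assertion ``convergence in any discrete norm'' has to be checked for a single norm only. First I would extract a convergent subsequence, then let $\veps\to0$ in the scheme \eqref{eq:dis_cons_mas}-\eqref{eq:dis_cons_mom} term by term, and finally identify the limit uniquely through the well-posedness of the limit scheme \eqref{eq:dis_incomp_mas}-\eqref{eq:dis_incomp_mom}. Since there are only $N=\lfloor T/\dt\rfloor$ time levels, a single diagonal extraction treats all of them simultaneously.

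For the compactness step I would invoke the uniform-in-$\veps$ bounds already established. Lemma~\ref{lem:disc_conv_bdd} gives $\rho^{(k)}\to1$, in particular each nodal value $(\rho^{(k)})^n_K\to1$, together with $\norm{\uu{u}^{(k)}}_{L^\infty(0,T;L^2(\Omega)^d)}\leq C$; Lemma~\ref{lem:bdd_press} gives the crucial bound $\norm{\pi^{(k),n}}\leq C_{\mcal{T},\dt}$ uniformly in $\veps$ for the rescaled pressure $\pi^n_K=(p^n_K-m(p^n))/\veps^2$. As these are bounds on finitely many degrees of freedom, the Bolzano--Weierstrass theorem yields a subsequence, not relabelled, along which $(\uu{u}^{(k),n},\pi^{(k),n})\to(\uu{u}^n,\pi^n)$ for every $0\leq n\leq N$, including the initial datum $\uu{u}^0$.

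The heart of the argument is the exact identity
\begin{equation*}
\frac{1}{\veps^2}(\D_\E^{(i)}p^{(k),n})_\s=(\D_\E^{(i)}\pi^{(k),n})_\s,
\end{equation*}
which holds for every $\veps$ because the discrete gradient annihilates the additive constant $m(p^{(k),n})$; thus the stiff pressure gradient, though individually unbounded, coincides with the gradient of the uniformly bounded second-order pressure and passes to the limit as $(\D_\E^{(i)}\pi^{n})_\s$. I would then let $\veps\to0$ in each term. In the mass balance \eqref{eq:dis_cons_mas} the time difference $\frac{1}{\dt}(\rho_K^{n+1}-\rho_K^n)$ vanishes because $\rho^{(k)}\to1$, while by \eqref{eq:reg_vel} the stabilised velocity satisfies $v_\s^n\to u_\s^n-(\delta u(\pi^{n+1}))_\s$ with $(\delta u(\pi^{n+1}))_\s=\eta\dt(\D_\E^{(i)}\pi^{n+1})_\s$; since $\rho_K^{n+1},\rho_L^{n+1}\to1$ and the split parts in \eqref{eq:v_sigKpm} depend continuously on their arguments, the mass flux \eqref{eq:mass_flux} converges to $\abs{\s}v_{\s,K}$, and summation over $\s\in\E(K)$ reproduces exactly \eqref{eq:dis_incomp_mas}. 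The momentum balance \eqref{eq:dis_cons_mom} passes to the limit analogously, the only delicate point being the upwind value $u_{\eps,\mathrm{up}}^n$, whose selection may be discontinuous where the limiting flux vanishes; but the product $F_{\eps,\s}u_{\eps,\mathrm{up}}^n$ is continuous there, so the limit is \eqref{eq:dis_incomp_mom}.

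Finally I would upgrade subsequential convergence to convergence of the whole sequence by showing the limit is uniquely determined. Substituting $\delta\uu{u}(\pi^{n+1})=\eta\dt\,\bgrd_\E\pi^{n+1}$ into \eqref{eq:dis_incomp_mas} turns it into the discrete Poisson problem $\eta\dt(\divM\bgrd_\E\pi^{n+1})_K=(\divM\uu{u}^n)_K$; the inf-sup stability of the MAC discretisation, Lemma~\ref{lem:inf_sup}, guarantees that $\divM\bgrd_\E$ is invertible on the zero-mean subspace, so $\pi^{n+1}$ is determined uniquely from $\uu{u}^n$ alone, after which \eqref{eq:dis_incomp_mom} fixes $\uu{u}^{n+1}$ explicitly. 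Hence the recursion is driven by $\uu{u}^0$, and induction on $n$ shows that every subsequential limit coincides with the unique solution of the recursion, whence the full sequence converges. The main obstacle is the passage to the limit of the stabilisation term; it is resolved precisely by the reformulation of $\veps^{-2}\bgrd_\E p$ as $\bgrd_\E\pi$ together with the uniform pressure bound of Lemma~\ref{lem:bdd_press}, which is exactly what prevents the stiff term from degenerating as $\veps\to0$.
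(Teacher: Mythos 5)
Your argument follows the same skeleton as the paper's proof: compactness on the fixed, finite-dimensional mesh from the uniform bounds of Lemma~\ref{lem:disc_conv_bdd} and Lemma~\ref{lem:bdd_press}, followed by a term-by-term passage to the limit in \eqref{eq:dis_cons_mas}-\eqref{eq:dis_cons_mom}, with the identity $\veps^{-2}\bgrd_\E p^{(k),n}=\bgrd_\E\pi^{(k),n}$ (the discrete gradient annihilates the additive constant $m(p^{(k),n})$) doing the work of taming the stiff pressure term. The paper compresses all of this into a few lines; your version is more careful on two points it leaves implicit --- the continuity of the product $F_{\eps,\s}u^n_{\eps,\mathrm{up}}$ across sign changes of the flux, and the fact that the split quantities in \eqref{eq:v_sigKpm} still sum to $v_{\s,K}$ in the limit --- and both of those details are correct.

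Where you genuinely depart from the paper is the final step. The paper's proof stops at the existence of a \emph{subsequence} of $(\uu{u}^{(k)},\pi^{(k)})_{k\in\mbb{N}}$ converging to a limit that satisfies \eqref{eq:dis_incomp_mas}-\eqref{eq:dis_incomp_mom}; it never upgrades to convergence of the whole sequence, even though the statement asserts it. Your attempt to close that gap --- invertibility of $\divM\bgrd_\E$ on zero-mean functions (which indeed follows from the duality \eqref{eq:disc_dual} together with connectedness of $\Omega$, or from Lemma~\ref{lem:inf_sup}), so that the recursion \eqref{eq:dis_incomp_mas}-\eqref{eq:dis_incomp_mom} has exactly one solution for a given starting velocity --- is sound as far as it goes, but it is anchored at $\uu{u}^0$. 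The hypotheses only give uniform boundedness of the discretised ill-prepared initial velocities \eqref{eq:dis_ic}, cf.\ \eqref{eq:ill_prep_id}, not their convergence as $\veps^{(k)}\to0$; different subsequences may therefore converge to different initial limits $\uu{u}^0$ and hence, by your own uniqueness argument, to different solutions of the limit recursion. So whole-sequence convergence cannot be concluded without additionally assuming that $\uu{u}^{(k),0}$ converges (or weakening the conclusion to subsequential convergence, which is all the paper's own proof actually delivers). If you state that extra hypothesis explicitly, your proof is complete and strictly stronger than the one in the paper.
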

\begin{proof}
Using the second term on the left hand side of \eqref{eq:bdd_terms} we
can easily show that $\rho^{(k)}\rightarrow 1$ as $k\rightarrow\infty$
using estimates on $\Pi_{\gm}$ from Lemma~\ref{lem:disc_conv_bdd}; see
also \cite{HLS21}. From the first term in \eqref{eq:bdd_terms} we get
the estimate
$\norm{\uu{u}^{(k)}}_{L^{2}(\Omega)^{d}}\leq\sqrt{\frac{2C}{\rho_{\min}}}$
for all $k\in\mathbb{N}$. Noting that $\pi^{(k)}$ bounded, cf.\
\eqref{eq:bdd_press}, we conclude that
$\norm{\frac{1}{(\veps^{(k)})^2}\bgrd_\E p^{(k)}}$ is bounded and
therefore admits a weak limit. Hence, there exists a subsequence of
$(\uu{u}^{(k)},\pi^{(k)})_{k\in\mathbb{N}}$ which tends, in any
discrete norm, to a limit $(\uu{u},\pi)$ which satisfies
\eqref{eq:dis_incomp_mas}-\eqref{eq:dis_incomp_mom}. 
\end{proof}

\section{Numerical Results}
\label{sec:num_res}

In this section, we report the results of extensive numerical tests
performed with the semi-implicit scheme
\eqref{eq:dis_cons_mas}-\eqref{eq:dis_cons_mom}. Note that the
stability analysis performed in Section~\ref{sec:upwind_scheme}, cf.\
Theorem~\ref{lem:dis_totbal}, requires that the timesteps $\dt$ be
chosen according to the condition: 
\begin{equation}
\label{eq:timestep_res}
     \frac{\dt}{|\Ds|}\sum_{\eps\in\tilde\E(\Ds)}\frac{-(F_{\eps,\s}(\rho^{n+1},\uu{v}^n))^-}{\rho_{\Ds}^{n+1}}\leq\frac{1}{2},\;\forall
     \s\in\E^{(i)},\; i=1,\dots,d. 
\end{equation}
Since the above stability condition is implicit and difficult to carry
out, along the lines of \cite{CDV17, DVB17,DVB20}, we derive a
sufficient condition which is easy to implement in practice.  
\begin{proposition}
Suppose $\dt>0$ be such that for each $\s\in\E^{(i)},\;
i\in\{1,\dots,d\},\; \s = K|L$, the following holds: 
\begin{equation}
\label{eq:timestep_suff}
    \dt\max\Bigg\{\frac{\abs{\D K}}{\abs{K}},\frac{\abs{\D L}}{\abs{L}}\Bigg\}\Bigg(\abs{u^n_\s} + \sqrt{\frac{\eta}{\veps^2}\left|p^{n+1}_L - p^{n+1}_K\right|}\Bigg)\leq \min\Big\{1,\; \frac{1}{3}\mu^{n,n+1}_{K,L}\Big\},
\end{equation}
where $\abs{\D K}=\sum_{\s\in\E(K)}\abs{\s}$ and $\mu^{n,n+1}_{K,L} = \displaystyle\frac{\min\{\rho^n_K,\rho^n_L\}}{\max\{\rho^{n+1}_K,\rho^{n+1}_L\}}$. Then $\dt$ satisfies the inequality \eqref{eq:timestep_res}.
\end{proposition}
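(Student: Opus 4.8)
The plan is to prove the pointwise-in-$\s$ implication $\eqref{eq:timestep_suff}\Rightarrow\eqref{eq:timestep_res}$ by estimating the left-hand side of \eqref{eq:timestep_res} from above. First I would discard the negative-part structure via $-(F_{\eps,\s}(\rho^{n+1},\uu{v}^n))^-\leq\abs{F_{\eps,\s}(\rho^{n+1},\uu{v}^n)}$, so that it suffices to bound $\frac{\dt}{\abs{\Ds}\rho_{\Ds}^{n+1}}\sum_{\eps\in\tilde\E(\Ds)}\abs{F_{\eps,\s}(\rho^{n+1},\uu{v}^n)}$ by $\tfrac12$. Next, invoking the representation of the dual mass fluxes as fixed linear combinations of the primal mass fluxes through the faces of the two primal cells $K,L$ with $\s=K|L$ (see \cite{GHL+18a}), I would majorise $\frac1{\abs{\Ds}}\sum_{\eps\in\tilde\E(\Ds)}\abs{F_{\eps,\s}}$ by the primal-cell averages $\frac1{\abs{K}}\sum_{\s'\in\E(K)}\abs{F_{\s',K}}$ and $\frac1{\abs{L}}\sum_{\s'\in\E(L)}\abs{F_{\s',L}}$; using $\abs{D_{\s,K}}=\tfrac12\abs{K}$ this is where the geometric factor $G:=\max\{\abs{\D K}/\abs{K},\abs{\D L}/\abs{L}\}$ enters.

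The second step is to estimate a single primal flux. From the definition \eqref{eq:mass_flux}, $\abs{F_{\s',K}}\leq\abs{\s'}\max\{\rho^{n+1}_K,\rho^{n+1}_{L'}\}\abs{v^n_{\s'}}$, while the splitting $\uu{v}^n=\uu{u}^n-\delta\uu{u}(\rho^{n+1})$ together with \eqref{eq:reg_vel} gives $\abs{v^n_{\s'}}\leq\abs{u^n_{\s'}}+\frac{\eta\dt}{\veps^2}\frac{\abs{\s'}}{\abs{D_{\s'}}}\abs{p^{n+1}_{L'}-p^{n+1}_K}$. The crucial feature is that the stabilisation contributes an \emph{extra} factor $\dt/\veps^2$, so that after multiplying by the outer $\dt$ the estimate becomes \emph{quadratic} in $\dt$: schematically, the left-hand side of \eqref{eq:timestep_res} is controlled by $c\,\Lambda\bigl(\dt G\abs{u^n_\s}+(\dt G)^2\tfrac{\eta}{\veps^2}\abs{p^{n+1}_L-p^{n+1}_K}\bigr)$, where $\Lambda$ is the density ratio produced by the flux numerators and the factor $1/\rho^{n+1}_{\Ds}$ in \eqref{eq:timestep_res}, and is absorbed into $\mu^{n,n+1}_{K,L}$ through a lower bound for $\rho^{n+1}_{\Ds}$ drawn from the dual mass balance \eqref{eq:dis_cons_mass_dual} and the positivity of Theorem~\ref{thm:existence}.

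The final step is the square-root decoupling that explains the precise form of \eqref{eq:timestep_suff}. Writing $P:=\frac{\eta}{\veps^2}\abs{p^{n+1}_L-p^{n+1}_K}$, the hypothesis reads $\dt G(\abs{u^n_\s}+\sqrt{P})\leq\min\{1,\tfrac13\mu^{n,n+1}_{K,L}\}$; since both summands are non-negative it yields simultaneously $\dt G\abs{u^n_\s}\leq\tfrac13\mu^{n,n+1}_{K,L}$, $\dt G\sqrt{P}\leq\tfrac13\mu^{n,n+1}_{K,L}$ and $\dt G\sqrt{P}\leq1$. The last two combine to $(\dt G\sqrt{P})^2\leq\tfrac13\mu^{n,n+1}_{K,L}$, which is exactly the bound needed for the quadratic term, and the cap by $1$ is what prevents the square from degrading to $\tfrac19(\mu^{n,n+1}_{K,L})^2$ when $\mu^{n,n+1}_{K,L}$ is large. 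Feeding $\dt G\abs{u^n_\s}\leq\tfrac13\mu^{n,n+1}_{K,L}$ and $(\dt G)^2P\leq\tfrac13\mu^{n,n+1}_{K,L}$ into the schematic bound, the two pieces cancel the factor $\Lambda$ and leave a constant $\leq\tfrac12$, i.e.\ \eqref{eq:timestep_res}; the $\tfrac13$ is the budget split (one third for the convective term, the remainder for the doubled stabilisation term).

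The main obstacle is precisely this quadratic dependence on $\dt$ forced by the implicit velocity shift: a naive linear CFL majorisation is unavailable, and the two powers of $\dt$ must be decoupled through the square-root device while the density ratio is held fixed. The secondary technical point is the passage from the dual-cell flux sum to primal-cell quantities and the attendant bookkeeping of the mesh-geometric factor $\abs{\D K}/\abs{K}$ and of the density ratio $\mu^{n,n+1}_{K,L}$ (in particular the appearance of $\min\{\rho^n_K,\rho^n_L\}$ rather than an $n+1$ quantity), for which the MAC flux-reconstruction identities of \cite{GHL+18a} and the dual mass balance \eqref{eq:dis_cons_mass_dual} are the essential tools.
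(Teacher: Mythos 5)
Your reconstruction is being measured against a proof the paper never actually writes out: its entire argument for this proposition is a pointer to \cite[Proposition 3.2]{CDV17} (``follows the same lines'', adapted from an explicit scheme). Your skeleton is indeed that route: discard the negative-part structure, majorise the dual fluxes by primal ones with the geometric factor $G=\max\{\abs{\D K}/\abs{K},\abs{\D L}/\abs{L}\}$, split $\uu{v}^n=\uu{u}^n-\delta\uu{u}(\rho^{n+1})$, and use the cap $\min\{1,\cdot\}$ to tame the term that is quadratic in $\dt$: with $b:=\dt G\sqrt{P}\le\min\{1,\tfrac13\mu^{n,n+1}_{K,L}\}$ one gets $b^2\le b$, which is exactly why the hypothesis carries a square root of the pressure jump. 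This part of your proposal is correct and is the essential mechanism.

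The genuine gap is the density bookkeeping, which you compress into one clause: that the ratio $\Lambda$ ``is absorbed into $\mu^{n,n+1}_{K,L}$ through a lower bound for $\rho^{n+1}_{\Ds}$ drawn from the dual mass balance \eqref{eq:dis_cons_mass_dual} and the positivity of Theorem~\ref{thm:existence}''. As stated this is circular: extracting a lower bound on $\rho_{\Ds}^{n+1}$ from \eqref{eq:dis_cons_mass_dual} requires an upper bound on $\frac{\dt}{\abs{\Ds}}\sum_{\eps}F_{\eps,\s}^{+}$, i.e.\ precisely the kind of dual-flux control the proposition is asserting (this is how the remark following Theorem~\ref{lem:dis_totbal} obtains $\rho^n_{\Ds}/\rho^{n+1}_{\Ds}\le\tfrac32$, but there \eqref{eq:suftime} is a hypothesis, not a conclusion). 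The non-circular route goes through the \emph{primal} mass balance \eqref{eq:dis_cons_mas}: since outgoing fluxes carry $\rho^{n+1}_K$ itself, it rearranges, using positivity of the neighbouring densities, to $\rho^{n+1}_K\bigl(1+\tfrac{\dt}{\abs{K}}\sum_{\s'\in\E(K)}\abs{\s'}(v^n_{\s',K})^{+}\bigr)\ge\rho^n_K$, and the bracket involves only velocities, which \eqref{eq:timestep_suff} applied on every face $\s'$ of $K$ (together with the geometric inequality $\abs{\s'}/\abs{D_{\s'}}\le G_{\s'}/2$ and $b^2\le b$) bounds by $1+\tfrac13\,\rho^n_K/\rho^{n+1}_K$; this yields $\rho^{n+1}_K\ge\tfrac23\rho^n_K$. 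That factor is not decorative: the hypothesis budget gives $\dt\abs{F_{\s',K}}\le\abs{\s'}\min\{\rho^n_K,\rho^n_{L'}\}/(3G_{\s'})$, and after dividing by $\rho^{n+1}_{\Ds}$ (a convex combination of $\rho^{n+1}_K$ and $\rho^{n+1}_L$) the surviving ratio is of the form $\rho^n/\rho^{n+1}$, so the final constant is $\tfrac13\cdot\tfrac32=\tfrac12$. In particular, your reading of the budget (``one third for the convective term, the remainder for the doubled stabilisation term'') is not how the constants mesh: both convection and stabilisation fit inside the single $\tfrac13\mu$ allowance precisely because $b^2\le b$, and the slack between $\tfrac13$ and $\tfrac12$ is consumed entirely by the density drift $\rho^n\le\tfrac32\rho^{n+1}$. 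Without the primal-mass-balance step, the asserted ``constant $\le\tfrac12$'' is left unjustified and the estimate closes only to a constant of order $1$.
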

\begin{proof}
The proof follows the same lines of \cite[Proposition 3.2]{CDV17},
where a similar result has been obtained for an explicit scheme; see
also  \cite{DVB17,DVB20} for analogous treatments. 
\end{proof}
In all the numerical case studies performed below, the above condition 
\eqref{eq:timestep_suff} is implemented using the solution at $t^n$ in 
an explicit manner as in \cite{PV16}. The time-step restriction
\eqref{eq:timestep_suff} gives a much less restrictive $\CFL$
condition compared to the classical explicit one, e.g.\ in 1D:  
\begin{equation}
\label{eq:CFL}
\CFL=\max\Big(\abs{u^n}+\frac{c^n}{\veps}\Big)\frac{\dt}{\abs{K}},
\end{equation}
where $c^n=\sqrt{\gamma p^n/\rho^n}$. In view of the boundedness of
the second order pressure $\pi$, cf.\ Lemma~\ref{lem:bdd_press}, we
note that $\dt$ permitted by \eqref{eq:timestep_suff} is
$\mcal{O}(1)$, whereas the classical explicit time-step is
$\mcal{O}(\veps)$ as $\veps\to0$. Hence, the semi-implicit scheme
\eqref{eq:dis_cons_mas}-\eqref{eq:dis_cons_mom} admits large
time-steps at low Mach numbers. In addition, the time-steps remain
bounded as $\veps\to0$ which is crucial for the AP property of the
scheme. The gain in $\CFL$ condition in low Mach number computations
is recorded in Subsection~\ref{sec:1d_riemann}.    

The numerical implementation of the scheme
\eqref{eq:dis_cons_mas}-\eqref{eq:dis_cons_mom} is done as
follows. First, the mass conservation equation \eqref{eq:dis_cons_mas}
is solved to get the updated density $\rho^{n+1}$. A Newton iteration
has to be performed due to the presence of nonlinear stabilisation
terms. In our experiments, we note that iterations converge in 2-3
steps. Once $\rho^{n+1}$ is calculated, the momentum update
\eqref{eq:dis_cons_mom} is evaluated explicitly to get the velocity
$\uu{u}^{n+1}$.   

\subsection{1D Riemann Problems}
\label{sec:1d_riemann}
We consider the following initial data from \cite{DT11} which consists
of several Riemann problems: 
\begin{align*}
    \rho(0,x) &= 1, & q(0,x) &= 1-\frac{\veps^2}{2}, & x &\in [0, 0.2]\cup[0.8,1],
    \\
    \rho(0,x) &= 1 + \veps^2, & q(0,x) &= 1,  & x &\in (0.2, 0.3],
    \\
    \rho(0,x) &= 1, & q(0,x) &= 1+\frac{\veps^2}{2}, & x &\in (0.3, 0.7],
    \\
    \rho(0,x) &= 1-\veps^2, & q(0,x) &= 1, & x &\in (0.7, 0.8],
\end{align*}
where $q=\rho u$ denotes the momentum. The pressure law is specified
as $p(\rho) = \rho^2$. The computational domain $[0,1]$ is divided
into $200$ mesh points and we apply periodic boundary
conditions. Computations are carried out up to a time $T= 0.05$ and
the time-steps are calculated using \eqref{eq:timestep_suff}. We set
$\veps= 0.8, 0.3, 0.05$ and $0.001$ which correspond to the transition
from compressible to weakly compressible to almost incompressible
regime. The minimum value of the stabilisation parameter $\eta$ is
observed to be $1.5$ whereas the maximum value is $4.36$ throughout
the case studies corresponding to different $\veps$. In
Figure~\ref{fig:riemann}, we plot the density and momentum profiles
against a reference solution obtained using an explicit Rusanov scheme
on a mesh resolution of $\Delta x = 1/500$ and $\Delta t = 1/20000$
when $\veps = 0.8, 0.3, 0.05$ and using the classical explicit
time-step with $\CFL = 0.9$ as described in \eqref{eq:CFL} when $\veps
= 0.001$. The figure shows that there are shocks and expansions in the
compressible case ($\veps=0.8$) and as $\veps\to0$, the
discontinuities get weaker and the flow tends to be smooth. We further
observe that both the density and momentum converge to constant values
in the almost incompressible regime $\veps = 10^{-3}$. Hence, the
proposed scheme can perform across the three different regimes and can
resolve the respective flow features accurately. Furthermore, we note
that the maximum value $\max_{i,n}\abs{u_i^n}\Delta t^n/\Delta x$ of
the advective Courant number for the present scheme corresponding to
$\veps=0.001$ is $224.64$ whereas for the explicit Rusanov scheme it
is $0.71$. The minimum and maximum values of $\eta$ computed for this test
problem are $1.5$ and $4.35$ respectively. 

\begin{figure}[htbp]
  \centering
  \begin{tabular}{cccc}
    \includegraphics[height=0.13\textheight]{1D_Riemann_rho_eps0p8} &
    \includegraphics[height=0.13\textheight]{1D_Riemann_rho_eps0p3} &
    \includegraphics[height=0.13\textheight]{1D_Riemann_rho_eps0p05} &
    \includegraphics[height=0.13\textheight]{1D_Riemann_rho_eps0p001} \\
    \includegraphics[height=0.13\textheight]{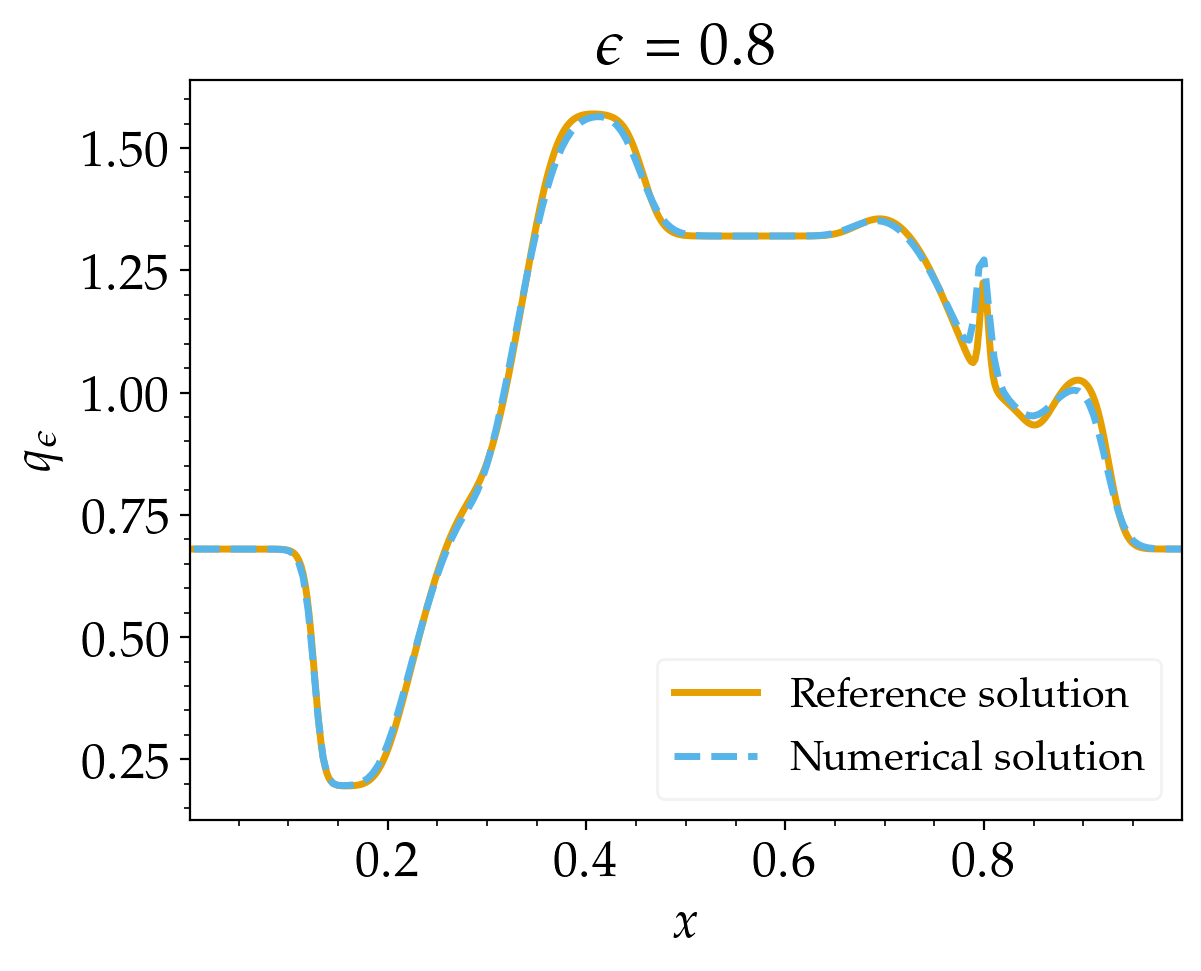} &
    \includegraphics[height=0.13\textheight]{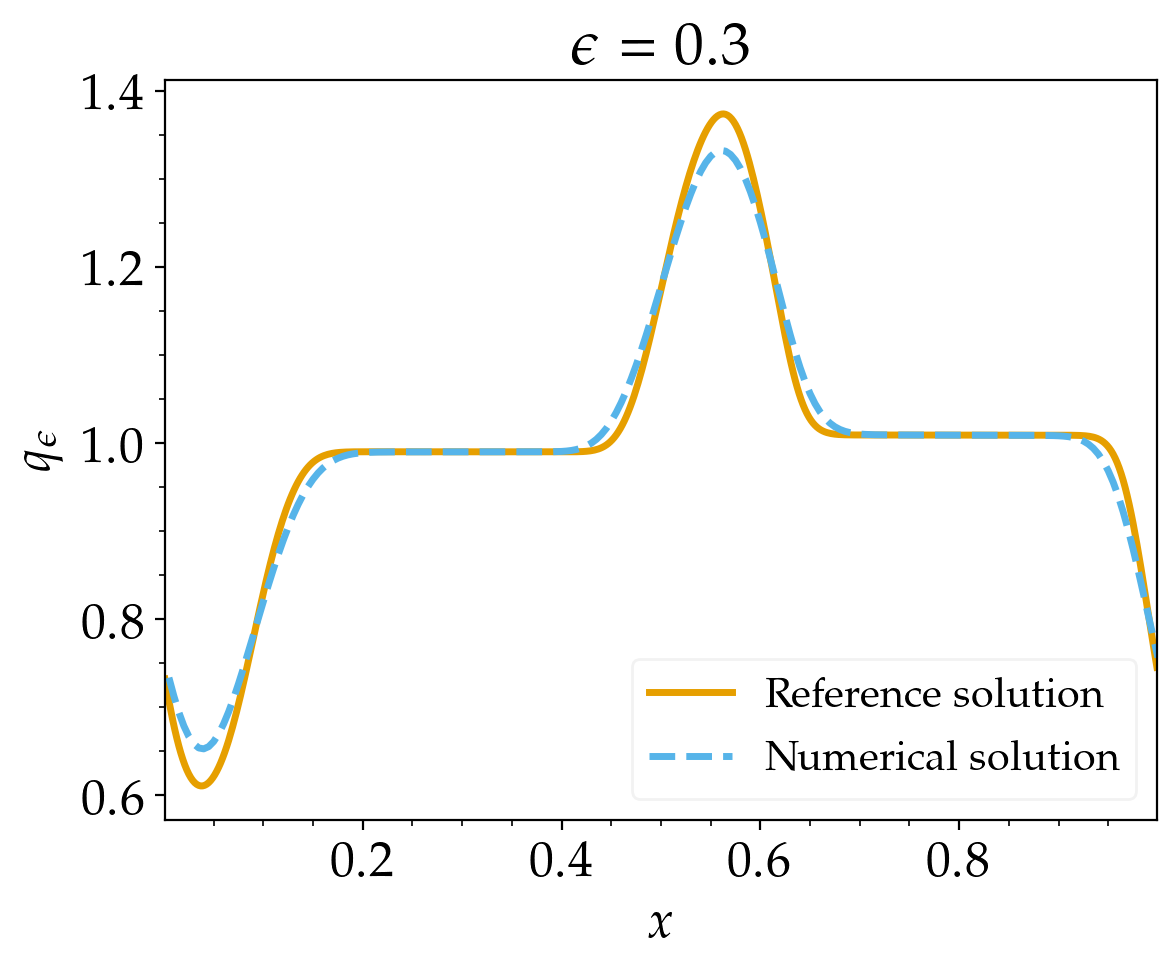} &
    \includegraphics[height=0.13\textheight]{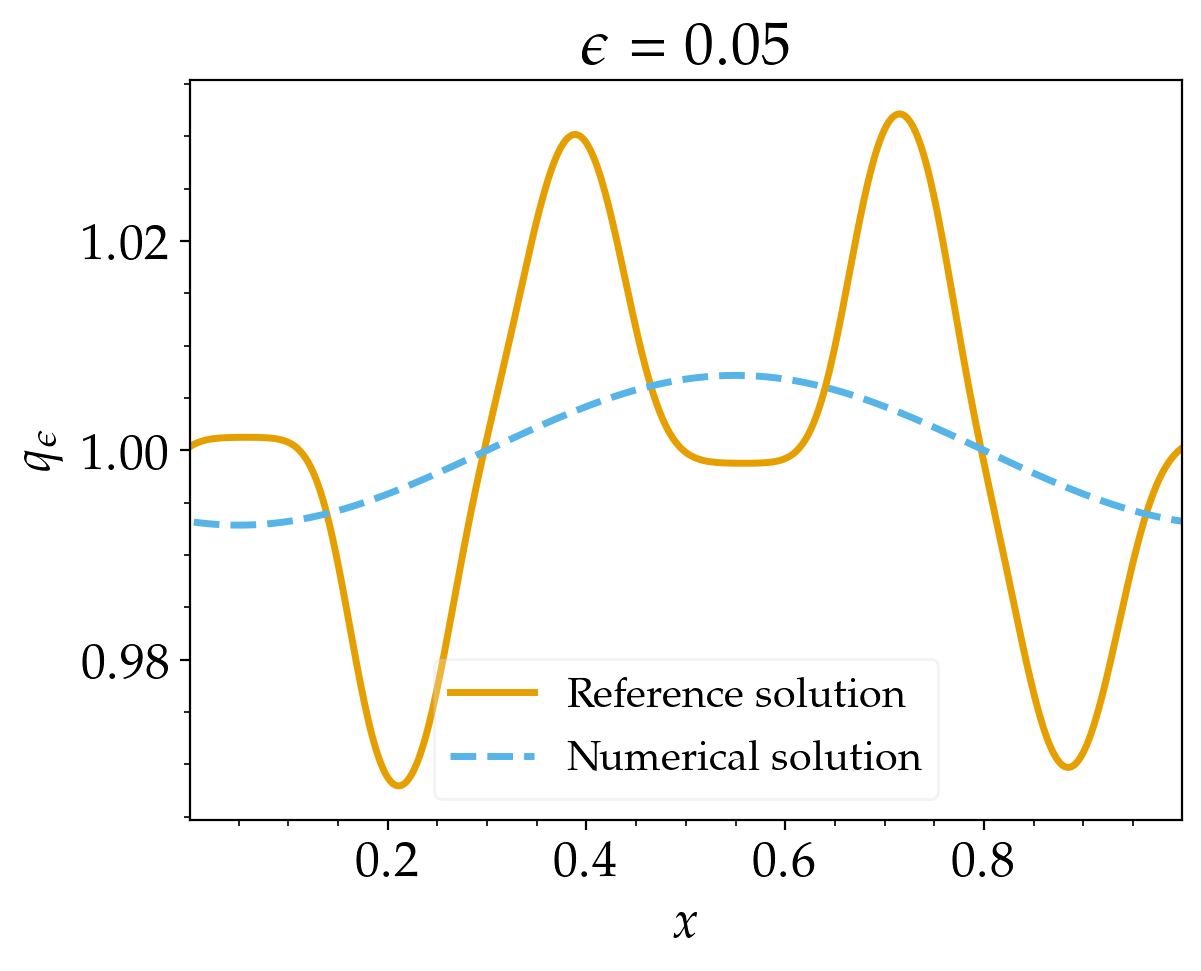} &
    \includegraphics[height=0.13\textheight]{1D_Riemann_q_eps0p001}
  \end{tabular}
    \caption{Density and momentum plots on top and bottom panels
      respectively for the 1D Riemann problem at time $T=0.05$ for
      $\veps=0.8, 0.3, 0.05, 0.001$.}  
    \label{fig:riemann}
  \end{figure}
  
  In order to demonstrate the positivity preserving property of the
  scheme, we perform the simulation of the following extreme Riemann
  problem motivated by the initial data given in \cite{Tor09}: 
    \begin{equation*}
      (\rho,u)=
      \begin{cases}
        (1,-3) & \mbox{if} \ x<0,\\
        (1,3) & \mbox{if} \ x>0.
      \end{cases}
    \end{equation*}
    The computational domain $[-1,1]$ is divided into $100$ mesh
    points and the final time is $T=0.15$. The gas constant is
    $\gamma=2$, the boundaries are periodic and we set $\veps=1$ in
    order to simulate the compressible regime. The density and
    momentum obtained are given in Figure~\ref{fig:ex_riemann} which
    clearly indicates the positivity preserving property. 
    \begin{figure}[htbp]
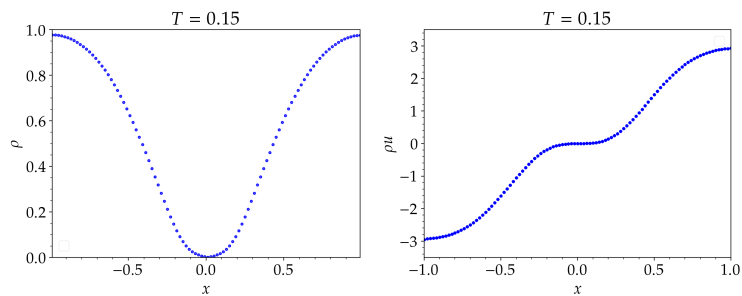

      \centering
      \includegraphics[height=0.18\textheight]{einfeldt_rho_t0p15.png}      
      \includegraphics[height=0.18\textheight]{einfeldt_q_t0p15.png}
      \caption[ex_riemann]{Extreme Riemann problem. Plots of the
        density and momentum at time $T=0.15$.}
      \label{fig:ex_riemann}
    \end{figure}
    
\subsection{Two Colliding Acoustic Pulses}
\label{sec:two_acoustic}
Here we consider the two colliding acoustic pulses problem from
\cite{Kle95}, adapted to isentropic flows as done in \cite{DT11}. The
problem describes the advection of large amplitude, short wavelength
density perturbations in a weakly compressible flow. The pressure law
is taken to be $p(\rho)=\rho^{1.4}$ and the initial density and
momentum read 
\begin{align*}
    \rho(0,x) &= 0.955 +\frac{\veps}{2}(1-\cos (2\pi x)),
    \\
    q(0,x) &= -\sign(x)\sqrt{1.4}(1-\cos (2\pi x)).
\end{align*}
We take the computational domain $[-1,1]$ which is divided into $100$
mesh points. The boundaries are periodic at both the ends. We first
plot the initial density profile corresponding to $\veps=0.1$ using
$1000$ mesh points in Figure~\ref{fig:col_pul_den}. Subsequently, we
plot the densities computed at times $T=0.01, 0.02, 0.04, 0.06, 0.08$
against a reference solution obtained using an explicit Rusanov scheme
with a mesh resolution $\Delta x=1/1000$ and $\Delta t$ computed using
the acoustic time step \eqref{eq:CFL} with CFL number
$0.5$. Analogously, in Figure~\ref{fig:col_pul_mom} we plot the
momentum profiles at times $T= 0, 0.01, 0.02, 0.04, 0.06, 0.08$. We
note the pulses superimpose, separate and this process repeats due to
the periodic boundary conditions applied. It can be seen from the
densities at $T=0$ and $T=0.08$ that because of weakly nonlinear
effects, the pulses start to steepen, resulting in the formation of two weak
shocks. Furthermore, we observe that the scheme is able to maintain
the amplitude of the pulses despite the flow being at low Mach numbers
and the initial data being ill-prepared. The minimum and maximum values of 
$\eta$ computed for this particular test problem are $1.57$ and $1.72$,
respectively.     
\begin{figure}[htbp]
  \centering
  \begin{tabular}{c c c}
  \includegraphics[height=0.16\textheight]{col_pulse_r_t0} &
  \includegraphics[height=0.16\textheight]{col_pulse_r_t0p01} &
  \includegraphics[height=0.16\textheight]{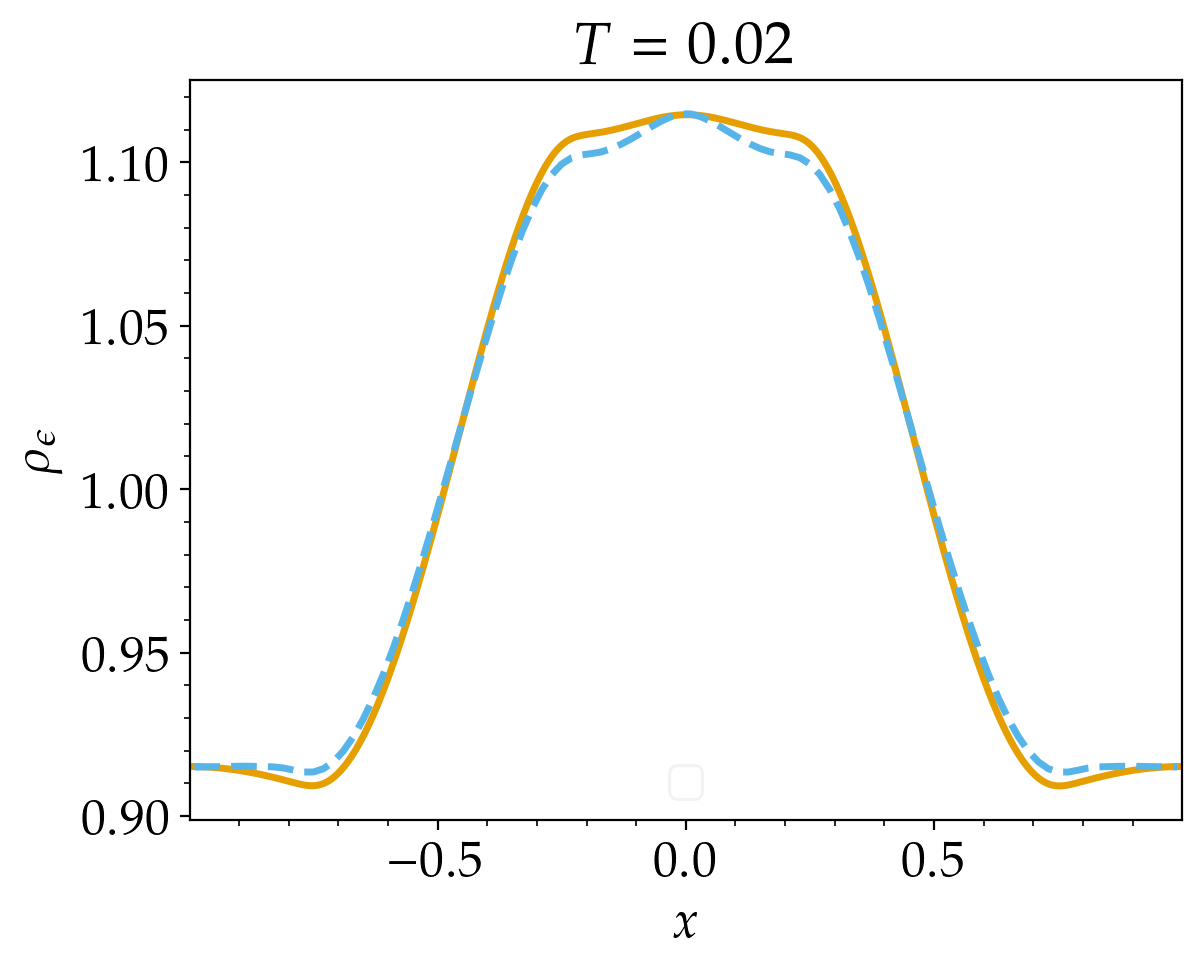} \\
  \includegraphics[height=0.16\textheight]{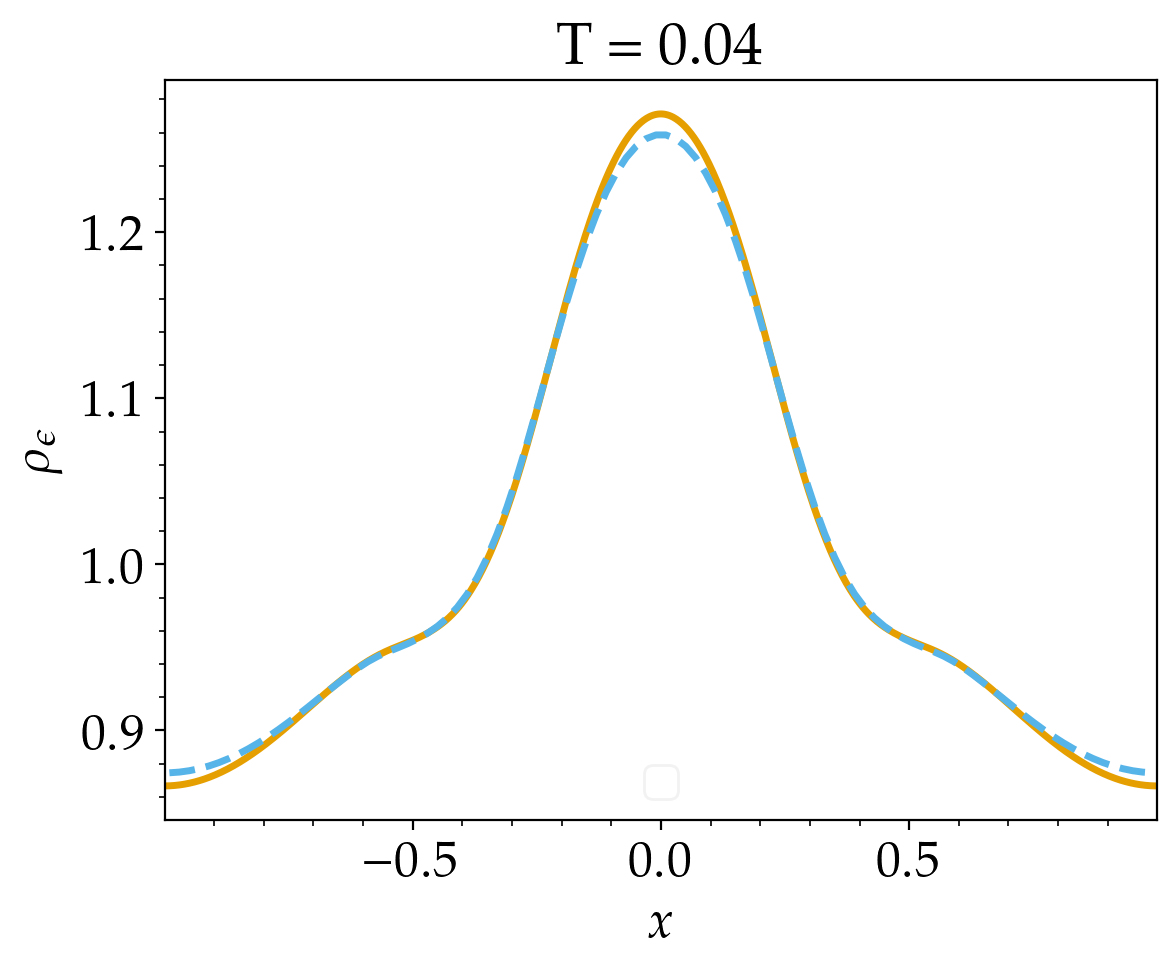} &
  \includegraphics[height=0.16\textheight]{col_pulse_r_t0p06} &
  \includegraphics[height=0.16\textheight]{col_pulse_r_t0p08}
  \end{tabular}
  \caption{Density profiles for the two colliding acoustic pulses
    problem for $\veps=0.1$ at times $T=0, 0.01, 0.02, 0.04, 0.06,
    0.08$.}      
  \label{fig:col_pul_den}
\end{figure}

\begin{figure}[htbp]
  \centering
  \begin{tabular}{c c c}
  \includegraphics[height=0.16\textheight]{col_pulse_q_t0}  &   
  \includegraphics[height=0.16\textheight]{col_pulse_q_t0p01} &
  \includegraphics[height=0.16\textheight]{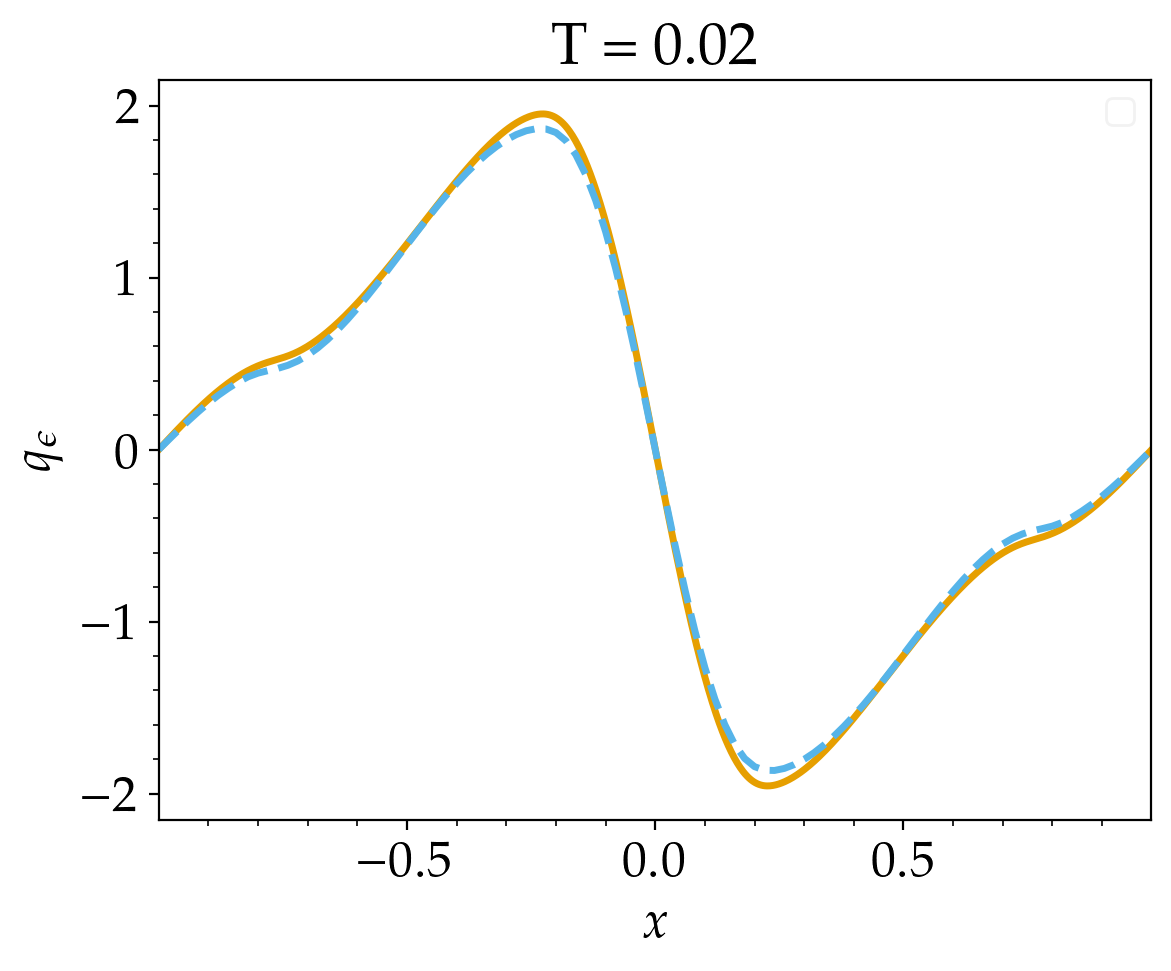} \\
  \includegraphics[height=0.16\textheight]{col_pulse_q_t0p04} &
  \includegraphics[height=0.16\textheight]{col_pulse_q_t0p06} &
  \includegraphics[height=0.16\textheight]{col_pulse_q_t0p08}
  \end{tabular}
  \caption{Momentum profiles for the two colliding acoustic pulses
    problem for $\veps=0.1$ at times $T=0, 0.01, 0.02, 0.04, 0.06,
    0.08$.}      
  \label{fig:col_pul_mom}
\end{figure}

To further demonstrate the scheme's capability to resolve low Mach
number flows with ill-prepared initial data, we compute the density and
momenta at times $T=0.001, 0.005, 0.008$ for an even smaller value
$\veps = 0.01$. The results are displayed in
Figure~\ref{fig:col_pul_eps0p01} where the pulses superimpose, gain
maximum amplitude and separate. 

\begin{figure}[htbp]
  \centering
  \begin{tabular}{c c c}
    \includegraphics[height=0.15\textheight]{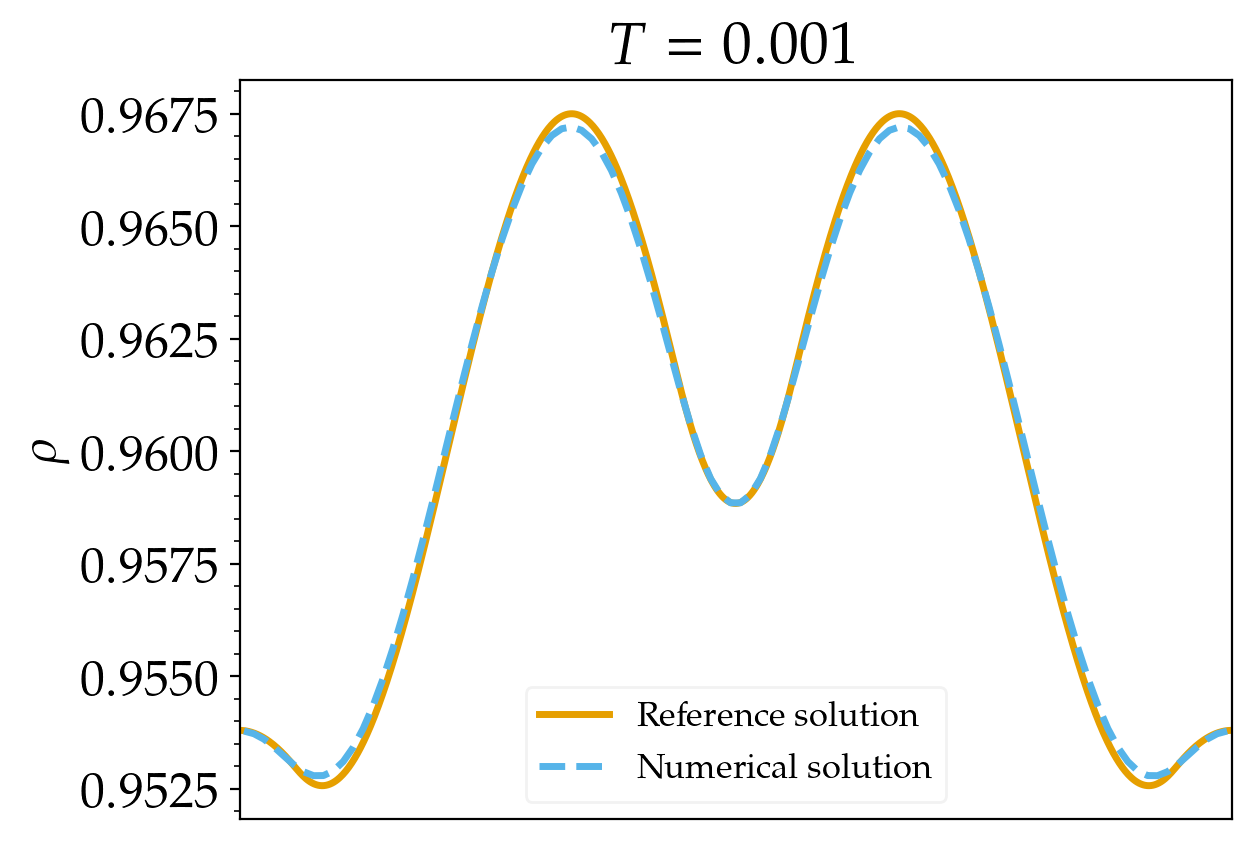} &
    \includegraphics[height=0.15\textheight]{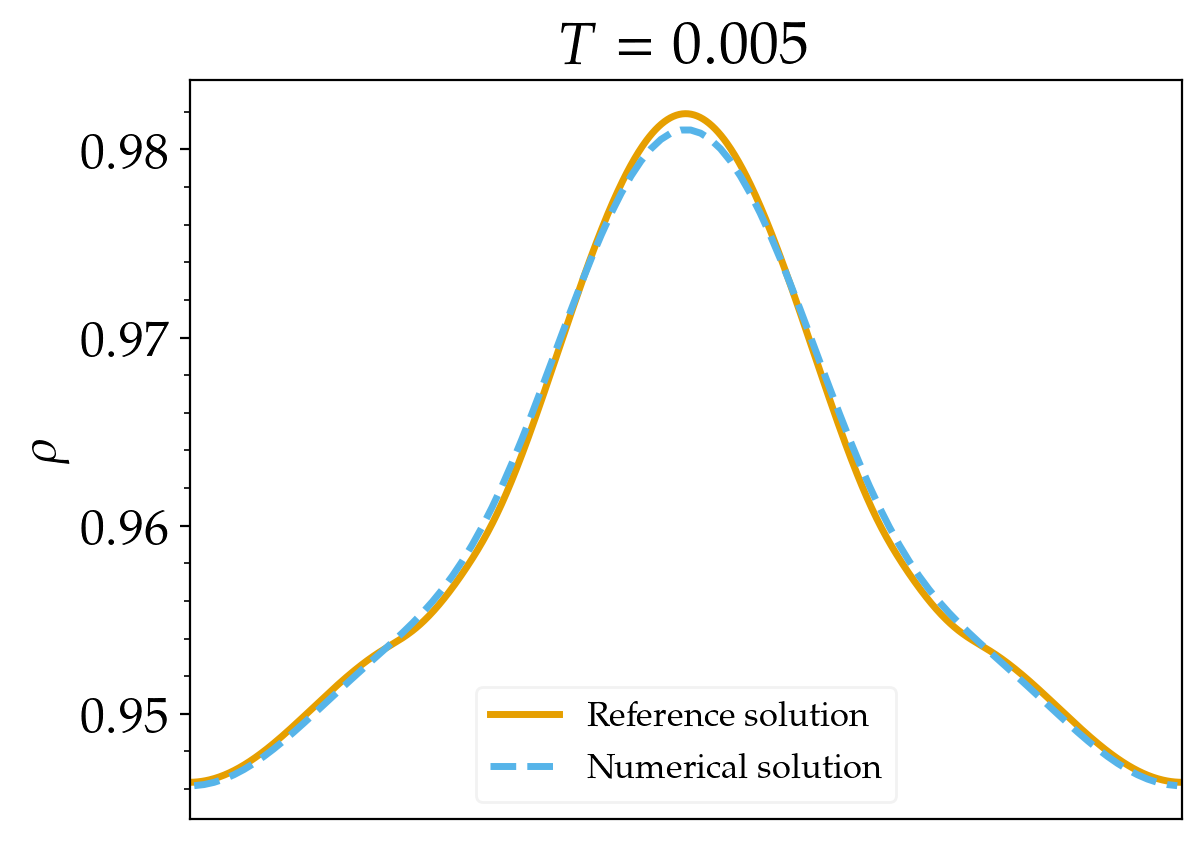} &
    \includegraphics[height=0.15\textheight]{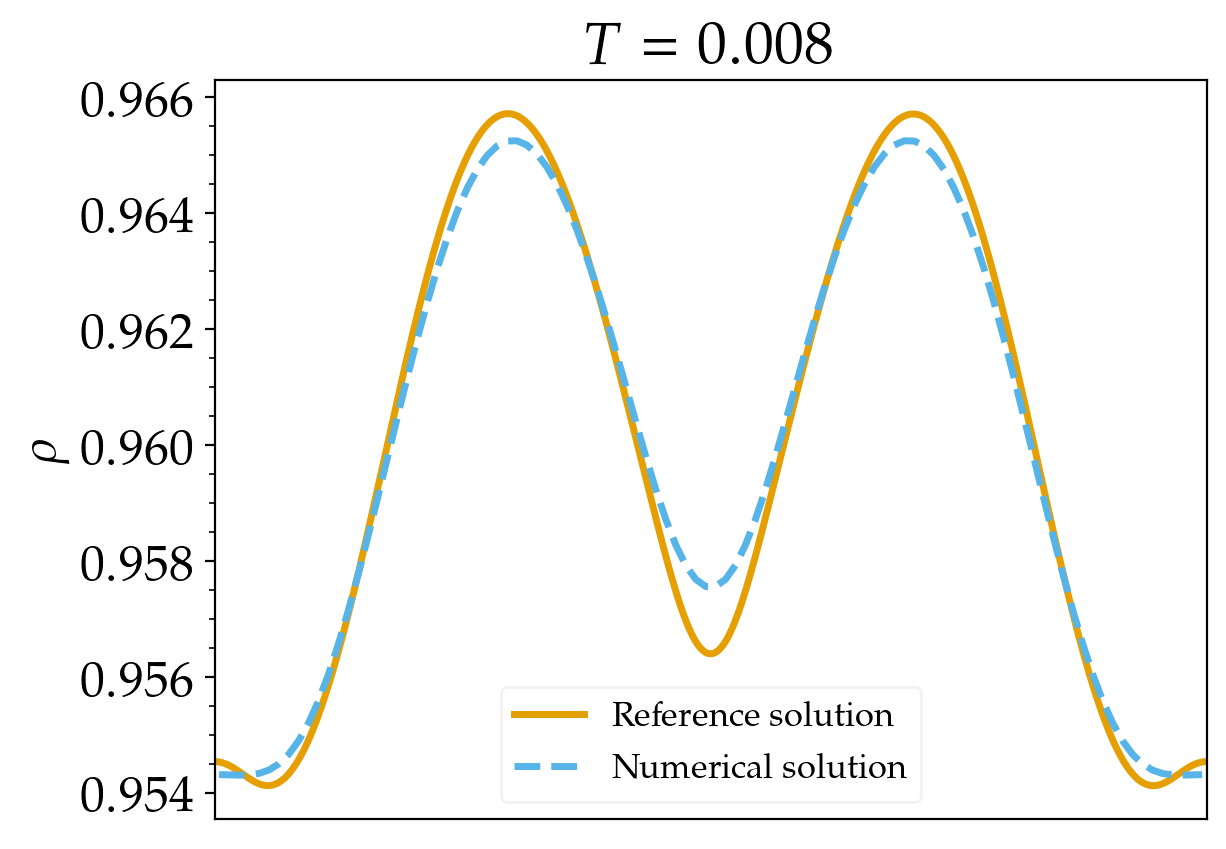}\\
    \includegraphics[height=0.16\textheight]{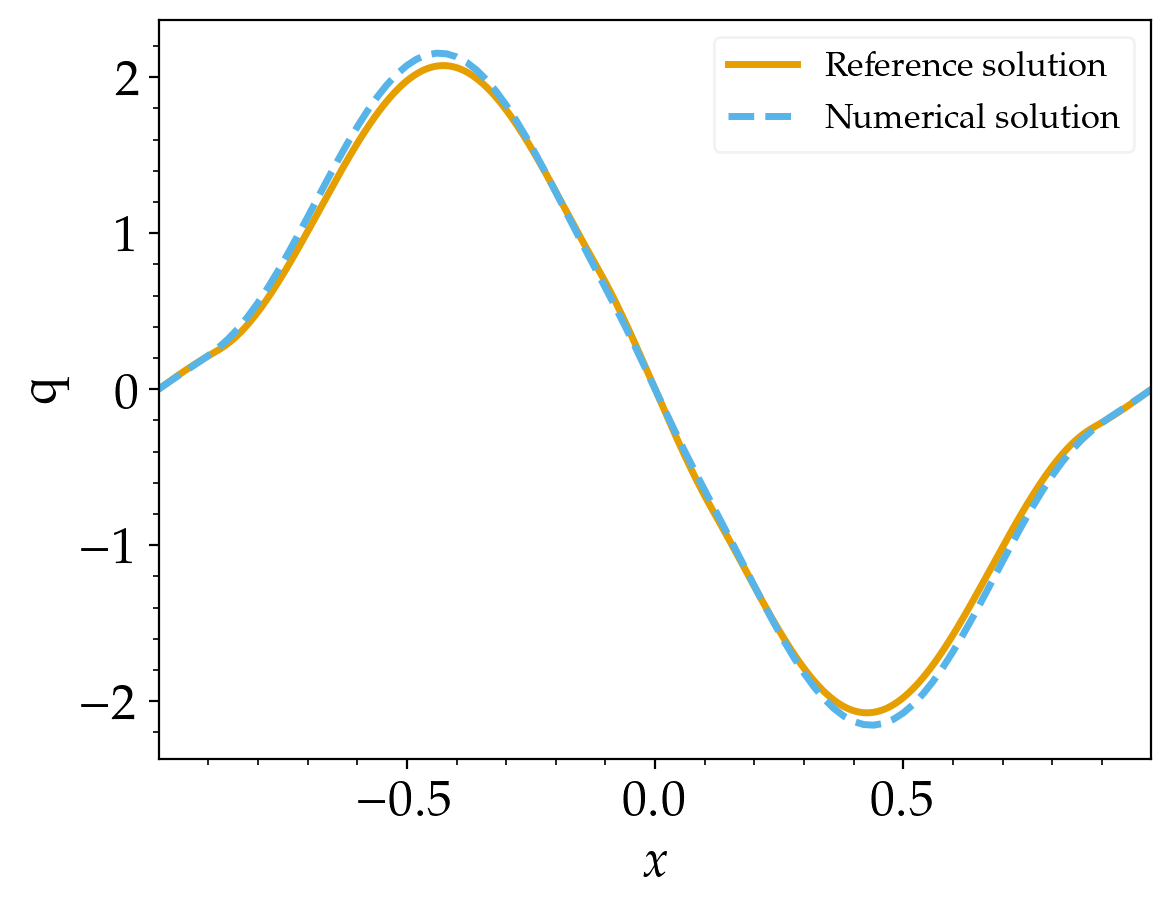} & 
    \includegraphics[height=0.16\textheight]{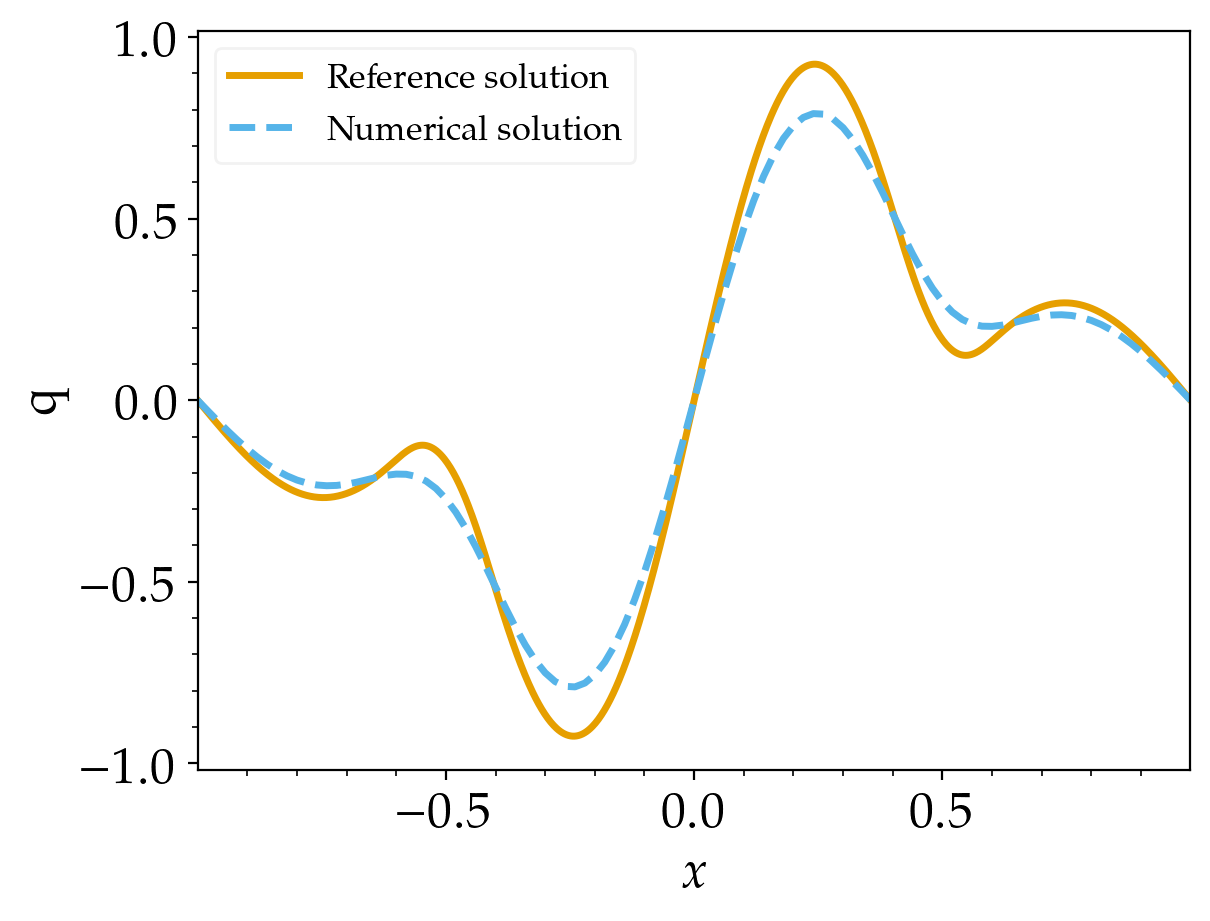} &
    \includegraphics[height=0.16\textheight]{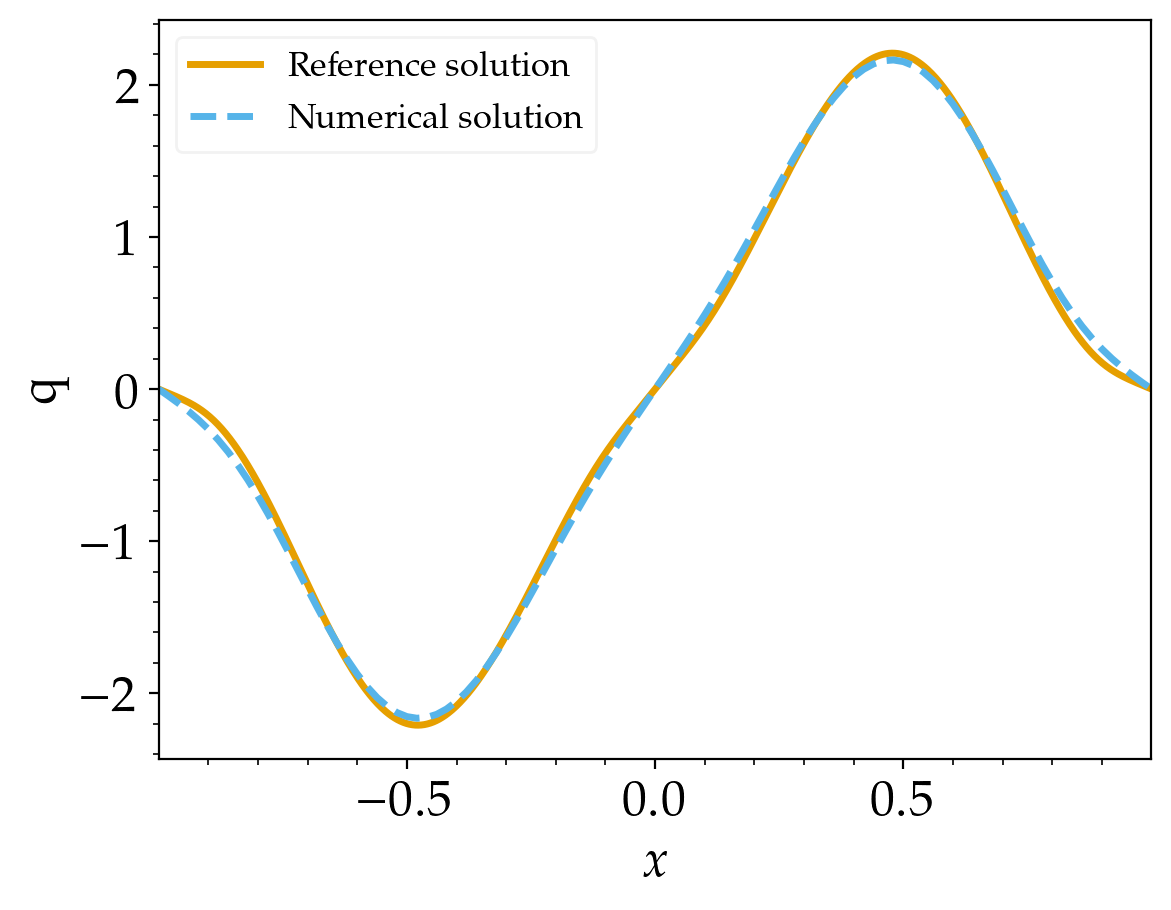}
    \end{tabular}
    \caption{Density and momentum profiles for the two colliding
      acoustic pulses problem for $\veps=0.01$ at times $T=0.001,
      0.005, 0.008$.}     
    \label{fig:col_pul_eps0p01}
  \end{figure}
  
\subsection{Advecting Vortex}
\label{sec:vortex}
Drawing inspiration from \cite{AS20}, we consider the following
advecting vortex problem which is a case study used to analyse the
numerical dissipation of explicit time-stepping schemes. It has been
reported in the literature that the dissipation increases with
decreasing Mach numbers. The moving vortex is a low Mach number flow
which does not contain acoustic waves. A radially symmetric vortex is
placed at the point $(x_0,y_0)=(0.5,0.5)$ in the computational domain
$\Omega = [0,1]\times[0,1]$. The initial data read 
\begin{align*}
   \rho(0, x, y) &= 110 +
                   \Big(\frac{\zeta\Gamma}{\omega}\Big)^2(k(\omega
                   r)-k(\pi))\mcal{X}_{\omega r\leq \pi}, 
   \\
   u(0, x, y) &= 0.1 + \Gamma(1+\cos(\omega r))(0.5-y)\mcal{X}_{\omega r\leq \pi},
   \\
   v(0, x, y) &= \Gamma(1+\cos(\omega r))(0.5-x)\mcal{X}_{\omega r\leq \pi},
\end{align*}
where $r=\sqrt{(x-x_0)^2+(y-y_0)^2}$. We set $\Gamma = 1.5$, $\omega =
4\pi$ and $k(r) =
2\cos(r)+2r\sin(r)+\frac{1}{8}\cos(2r)+\frac{r}{4}\sin(2r)+0.75r^2$.
Here, $\Gamma$ is a parameter known as the vortex intensity, $r$
denotes the distance from the core of the vortex, and $\omega$ is an
angular wave frequency specifying the width of the vortex. The Mach
number $\veps$ is controlled by adjusting the value of $\zeta$ via the
relation $\veps = 0.1\zeta/\sqrt{110}$. We further assume the
gas law $p(\rho)=\rho^2$. We take a mesh grid of $100\times100$
points and impose periodic boundary conditions on all sides. In
order to assess the dissipation of the scheme and its dependence on
$\veps$, we compute the flow Mach number $M=\sqrt{(u-0.1)^2+v^2}/c$ at
one period of rotation $T=5/3$ for different values $\veps =
10^{-1}, 10^{-2}, 10^{-3}, 10^{-4}, 10^{-5}, 10^{-6}$. We present the
pseudocolour plots of the Mach number in
Figure~\ref{fig:ML_vort_machnumpcol} which clearly shows the
independence of $M$ on $\veps$ compared to the initial Mach number
distribution given in Figure~\ref{fig:ML_vort_machnum_init}. In order
to further elucidate the idea, in Figure~\ref{fig:ML_vort_vorticity}
we plot the cross-section of the vorticity along the $x$-axis for the
chosen values of $\veps$ after one period of time and the plot of the
relative kinetic energies. It is evident that the vorticity distributions remain
independent of $\veps$ and the dissipation in relative kinetic energy
is only $0.07\%$. For this test problem, we note that $\eta\sim
0.013$. 

\begin{figure}[htbp]
  \centering
  \includegraphics[height=0.21\textheight]{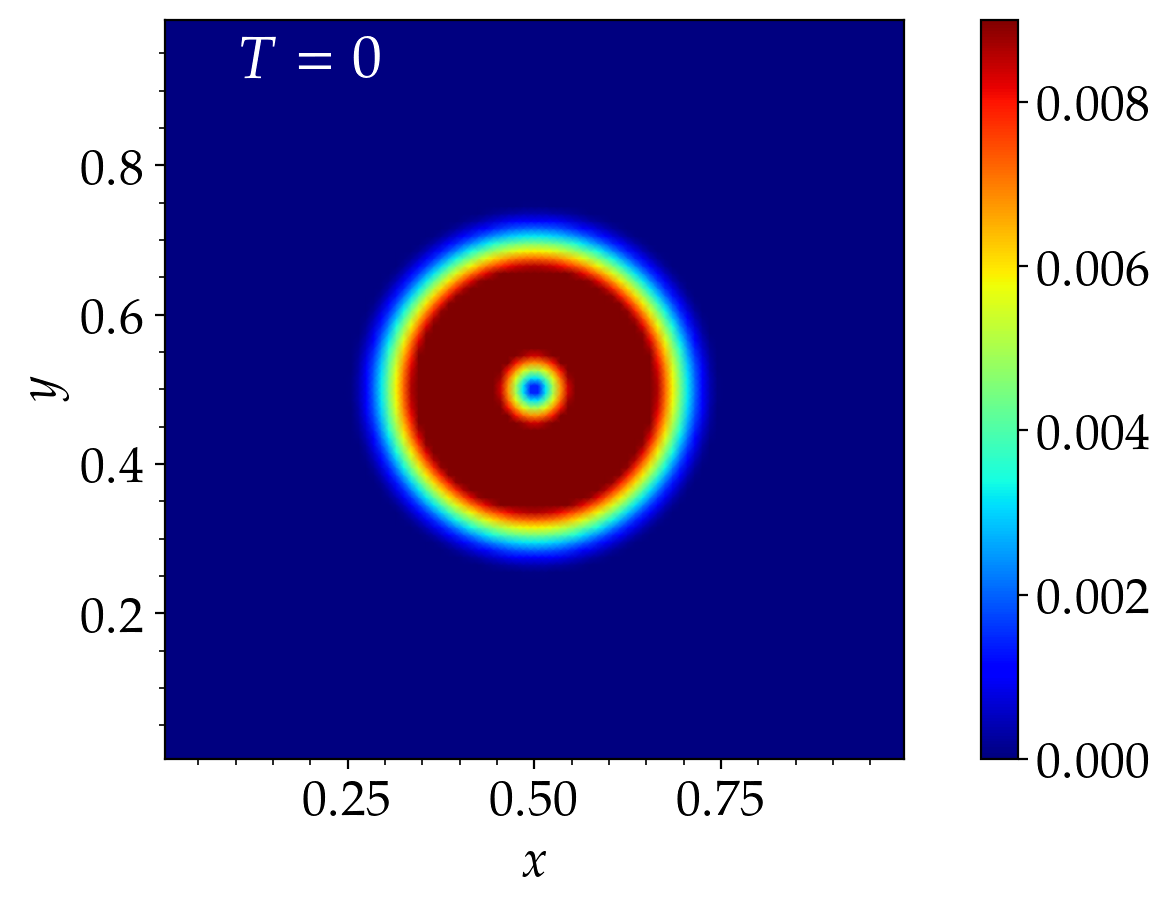}
  \caption{Pseudocolour plot of the Mach number at time $T=0$.} 
  \label{fig:ML_vort_machnum_init}
\end{figure}

\begin{figure}[htbp]
  \centering
  \begin{tabular}{c c c}
    \includegraphics[height=0.21\textheight]{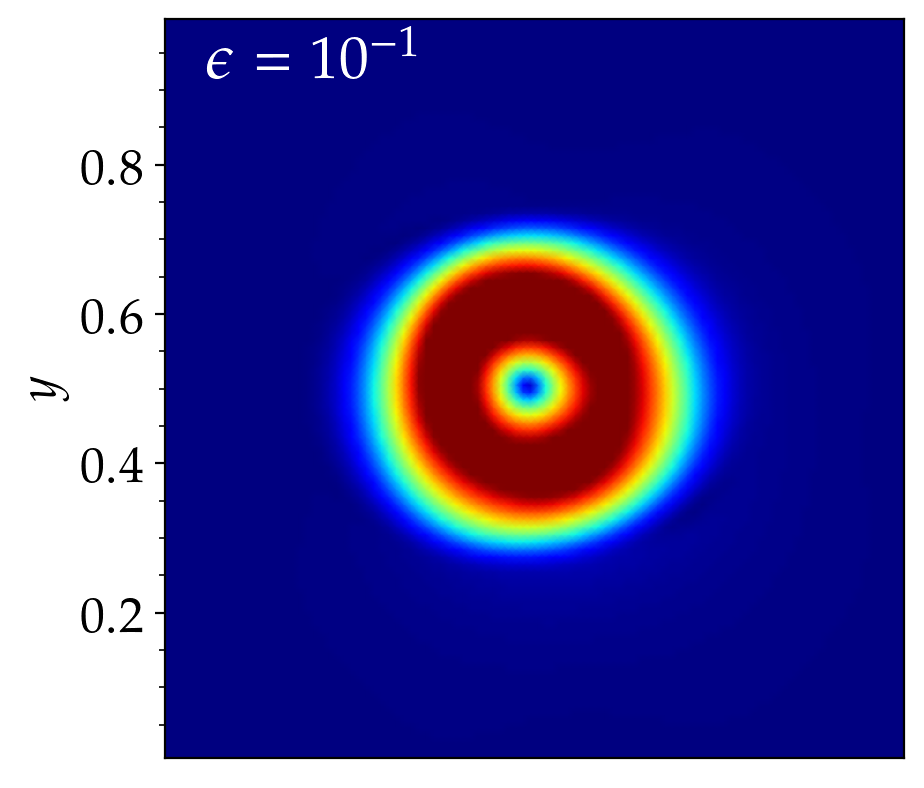} &
    \includegraphics[height=0.21\textheight]{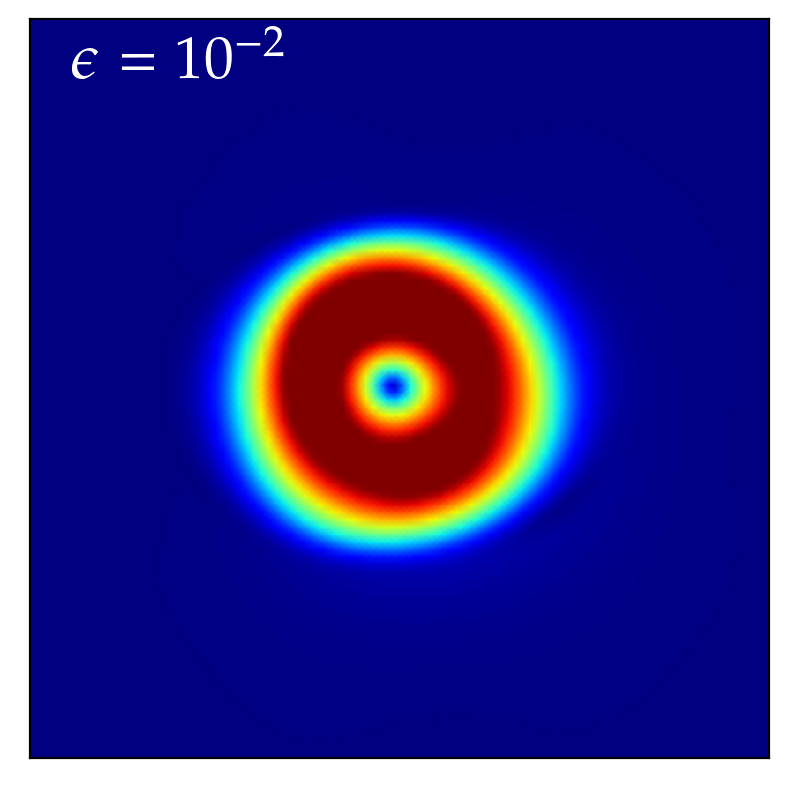} &
    \includegraphics[height=0.21\textheight]{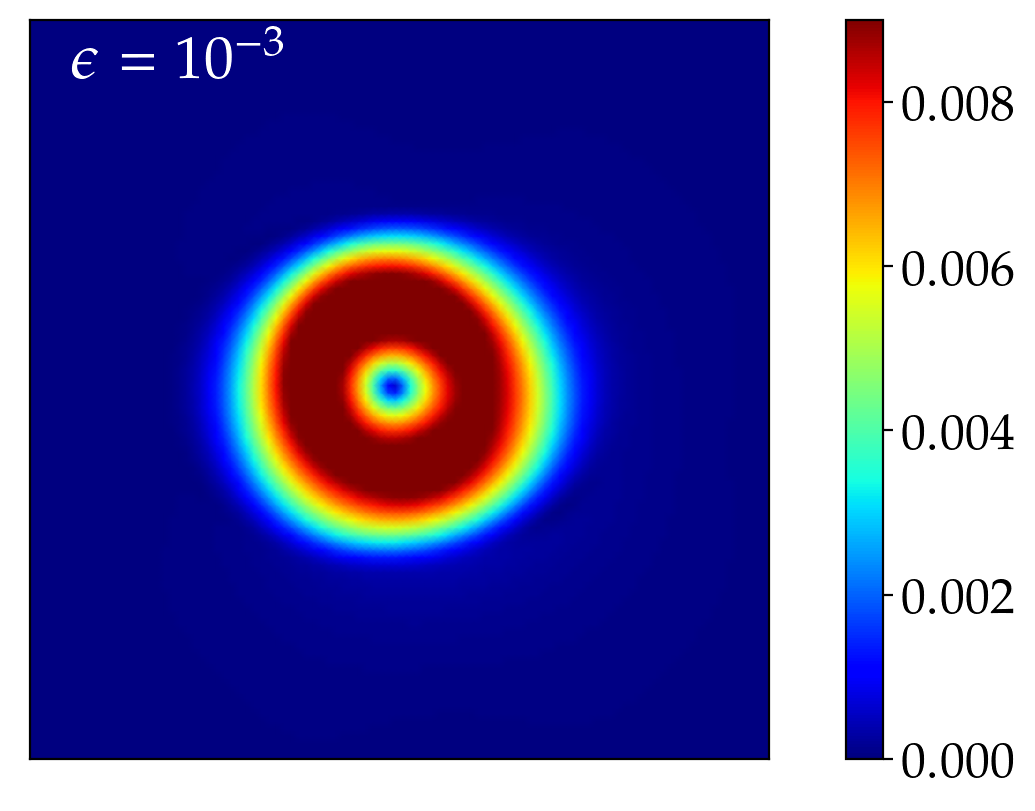} \\
    \includegraphics[height=0.24\textheight]{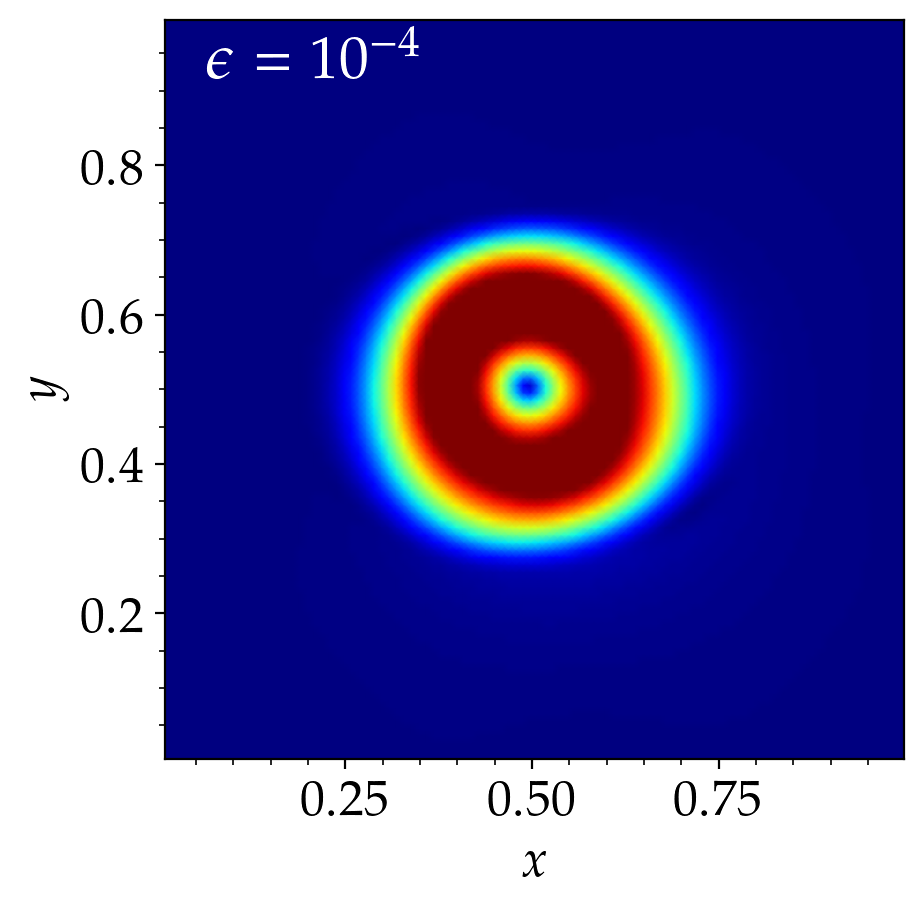} &
    \includegraphics[height=0.24\textheight]{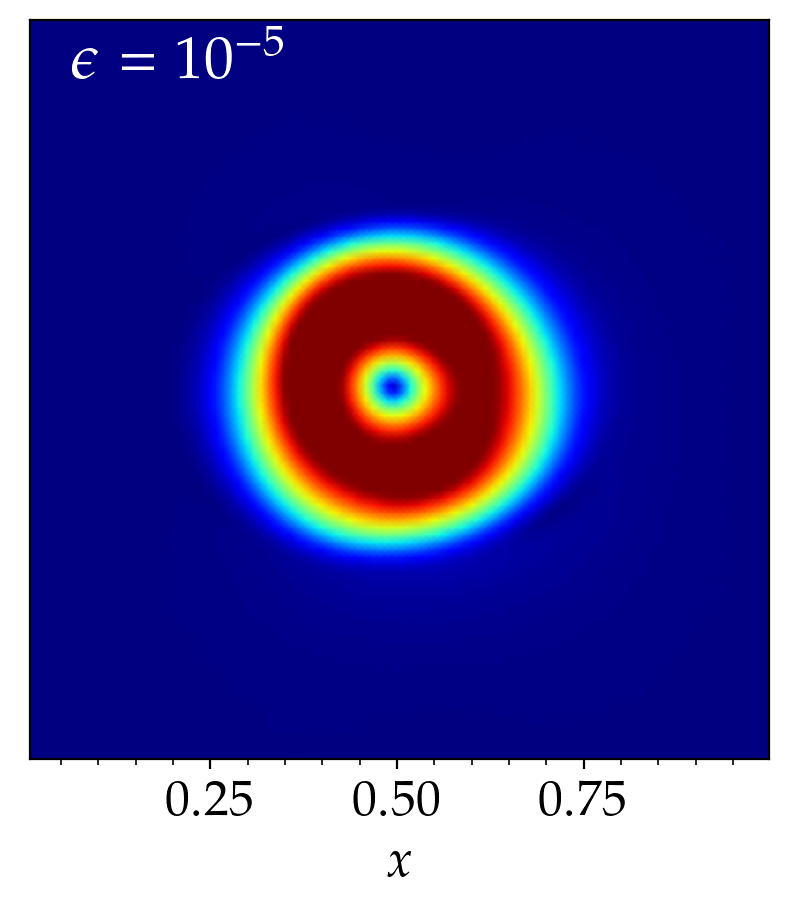} &
    \includegraphics[height=0.24\textheight]{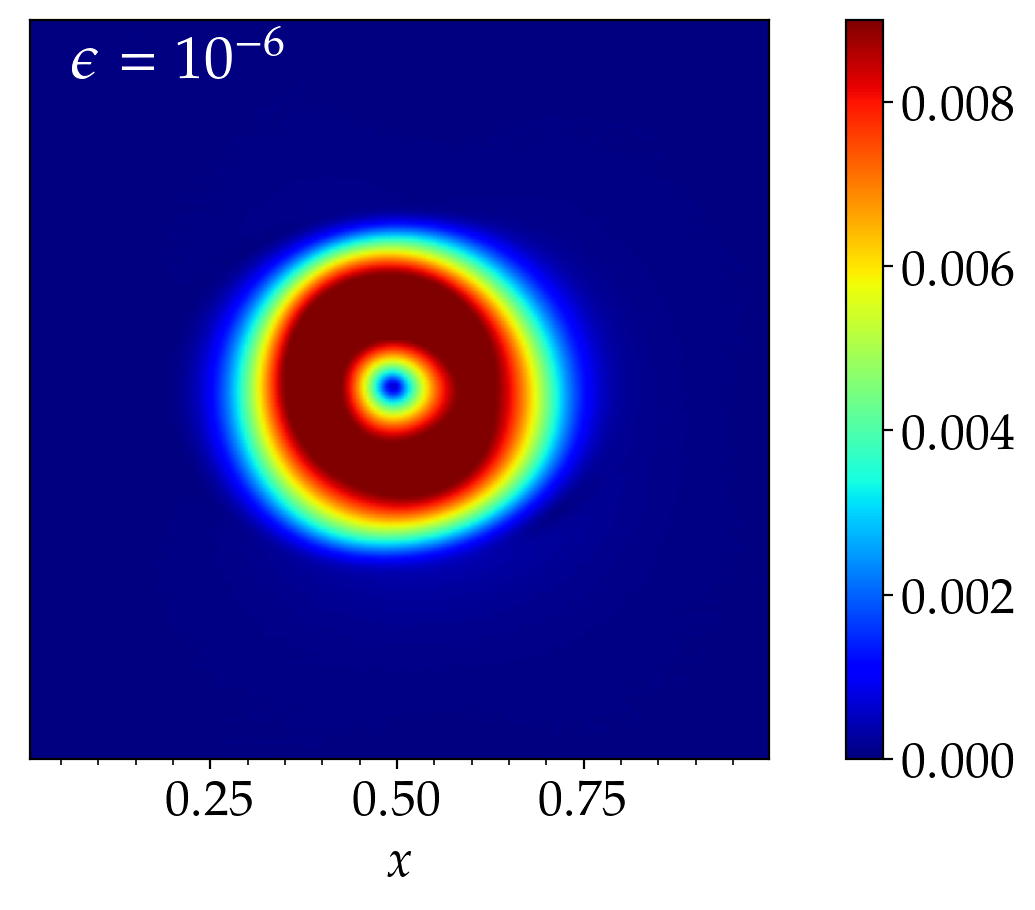}
  \end{tabular}
  \caption{Pseudocolour plots of the Mach number at time $T=5/3$ for
    $\veps = 10^{-1}, 10^{-2}, 10^{-3}, 10^{-4}, 10^{-5}, 10^{-6}$.} 
    \label{fig:ML_vort_machnumpcol}
\end{figure}

\begin{figure}[htbp]
  \centering
  \includegraphics[height=0.21\textheight]{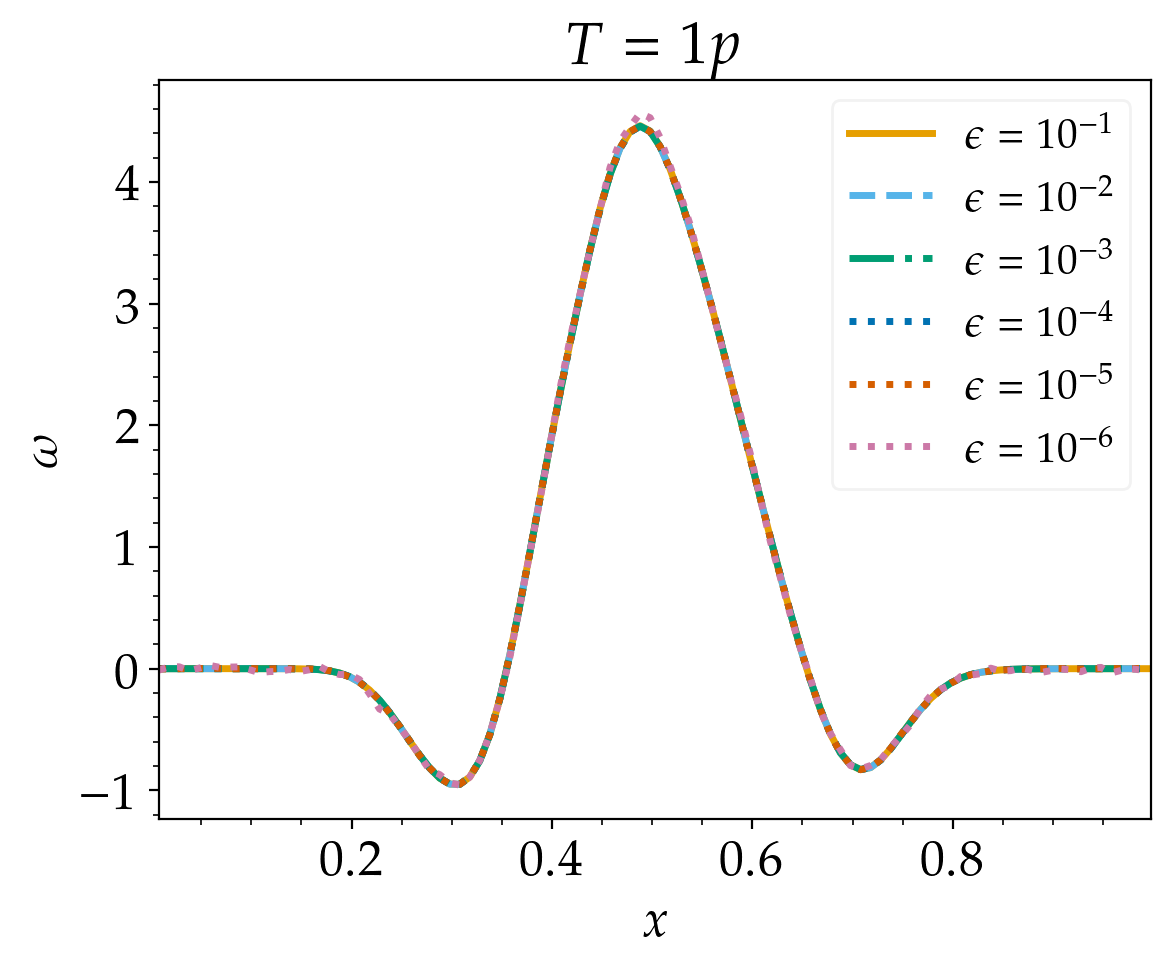}
  \includegraphics[height=0.21\textheight]{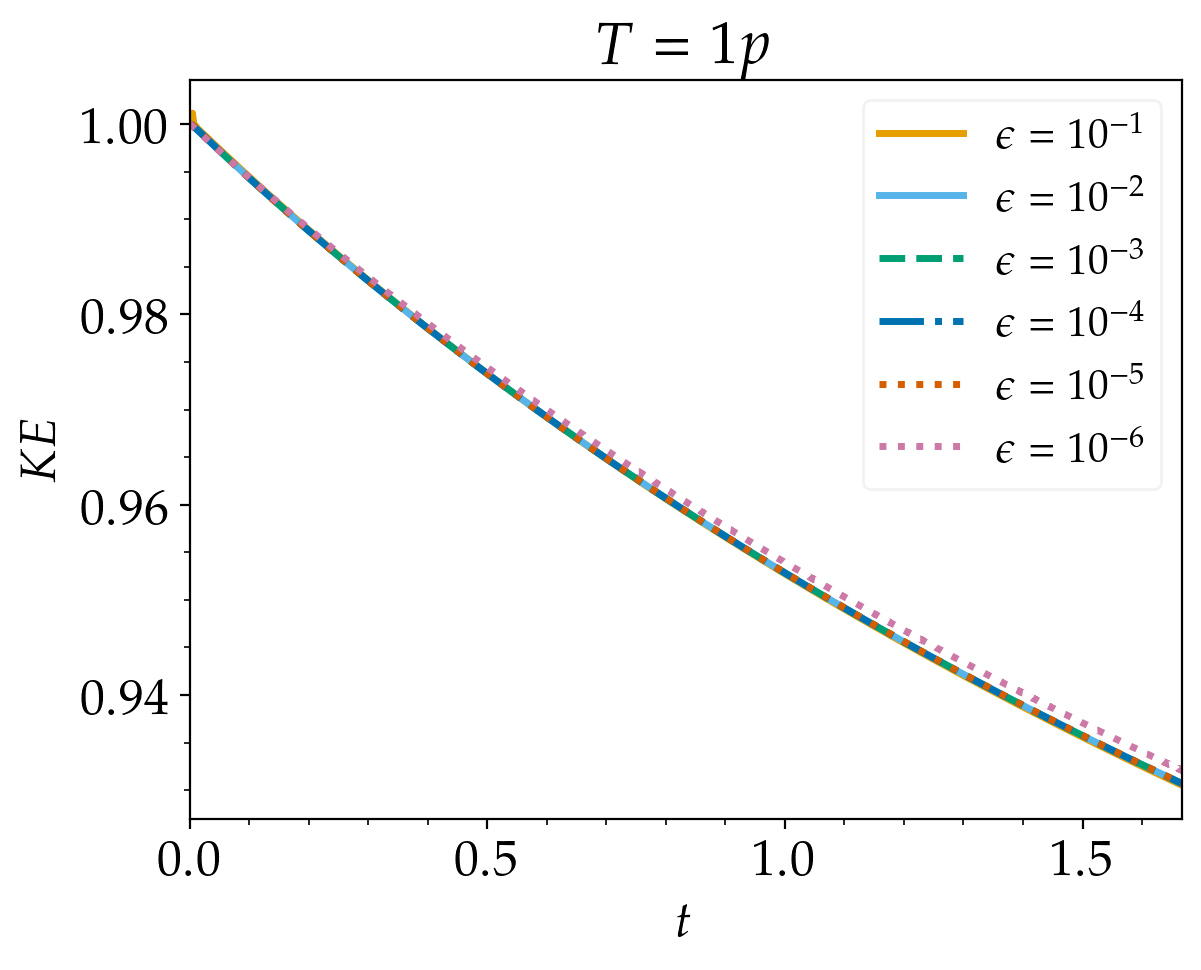}
  \caption{Cross-sections of the vorticity (left) and relative kinetic
    energy (right) for $\veps = 10^{-1}, 10^{-2}, 10^{-3}, 10^{-4},
    10^{-5},10^{-6}$.}   
  \label{fig:ML_vort_vorticity}
\end{figure}

\subsection{Cylindrical Explosion}
\label{sec:explosion}
Here we consider a 2D cylindrical explosion problem from \cite{DLV17}
with the isothermal pressure law $p(\rho)=\rho$, and the purpose of
this case study is twofold. First, we want to illustrate the shock
capturing capabilities of the present scheme in compressible regimes
$(\veps=1)$ and second, its ability to capture the incompressible
limit when $\veps\to0$. The initial density reads  
\begin{equation*}
    \rho(0,x,y) = 
    \begin{cases}
        1 + \veps^2, & \mbox{if} \ r^2\leq\frac{1}{4},
        \\
        1, & \mbox{otherwise},
    \end{cases}
\end{equation*}
where $r=\sqrt{x^2+y^2}$. The initial velocity field is taken as
\begin{equation*}
    (u,v)(0,x,y) =
    -\frac{\alpha(x,y)}{\rho(0,x,y)}\Big(\frac{x}{r},\frac{y}{r}\Big)\mcal{X}_{r>10^{-15}}, 
\end{equation*}
where $\alpha(x,y)=\max\{0,1-r\}(1-e^{-16r^2})$. We consider the
computational domain $[-1,1]\times [-1,1]$ and take $100\times100$
grid points. The boundaries are periodic everywhere. In
Figure~\ref{fig:cyl_exp_eps1p0_den} we present the surface plots of
the density which clearly indicates the scheme's shock capturing
capabilities. Also, we compare an 1D cut along the $x$-direction of the 
cylindrically symmetric density profile with a reference solution
obtained by an explicit Rusanov scheme with mesh resolution $\Delta x
= 1/500$, $\Delta t = 1/10000$ to validate the correctness of the
shock speed. Furthermore, since the problem is cylindrically
symmetric, we compare the radial velocity with the quasi-exact
reference solution obtained by the 1D Euler equations in radial
coordinates \cite{Tor09} on $200$ mesh points in
Figure~\ref{fig:cyl_exp_eps1p0_vel}. 

\begin{figure}[htbp]
  \centering
  \begin{tabular}{ccc}
  \includegraphics[height=0.18\textheight]{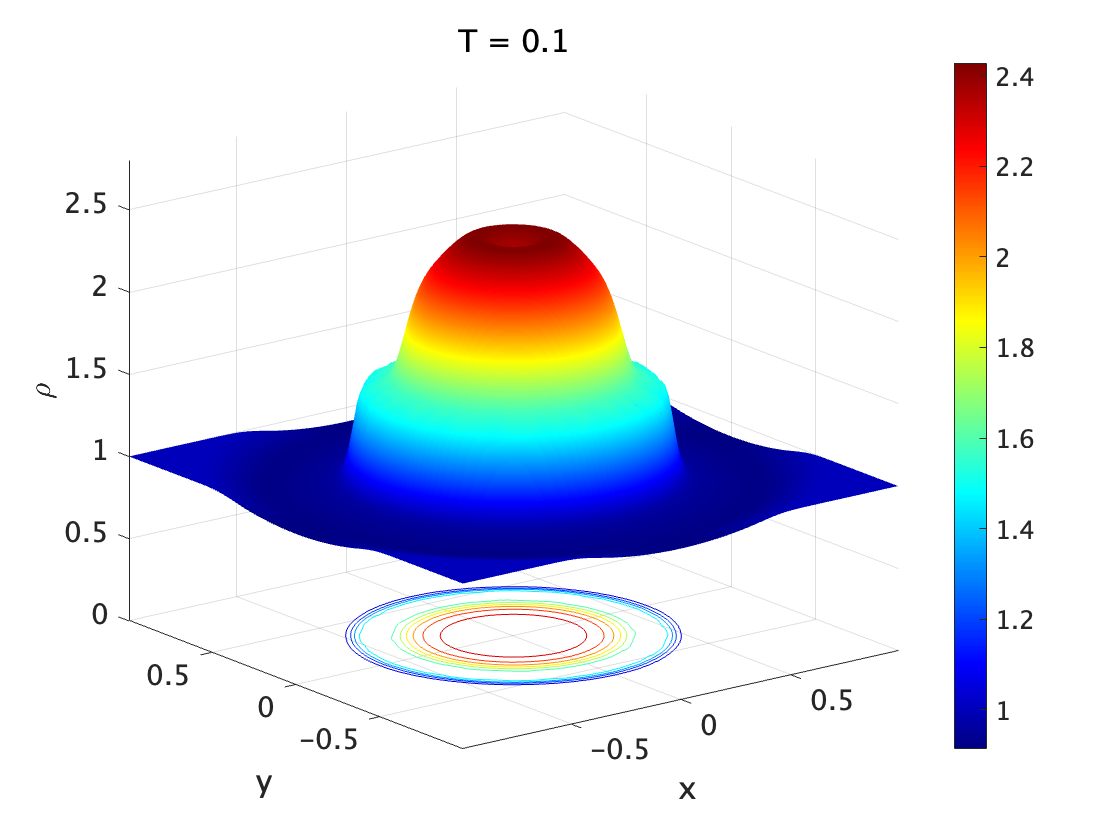} &
  \includegraphics[height=0.18\textheight]{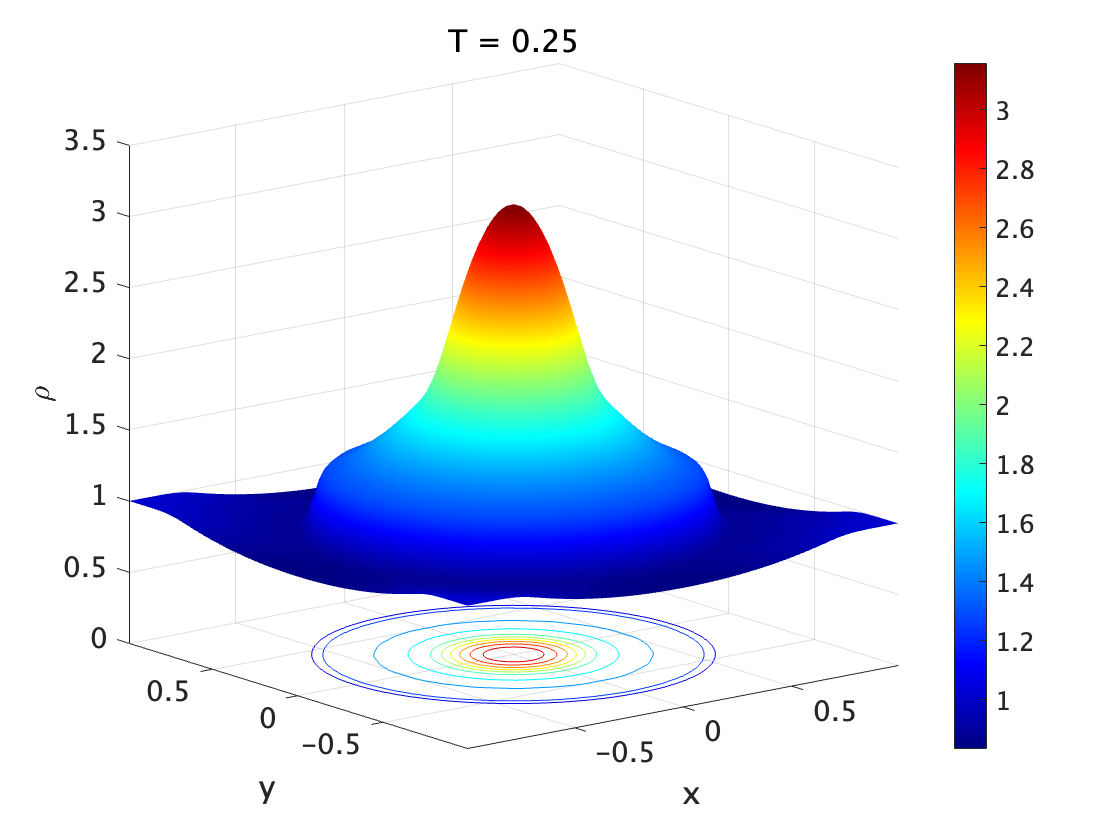} &
  \includegraphics[height=0.18\textheight]{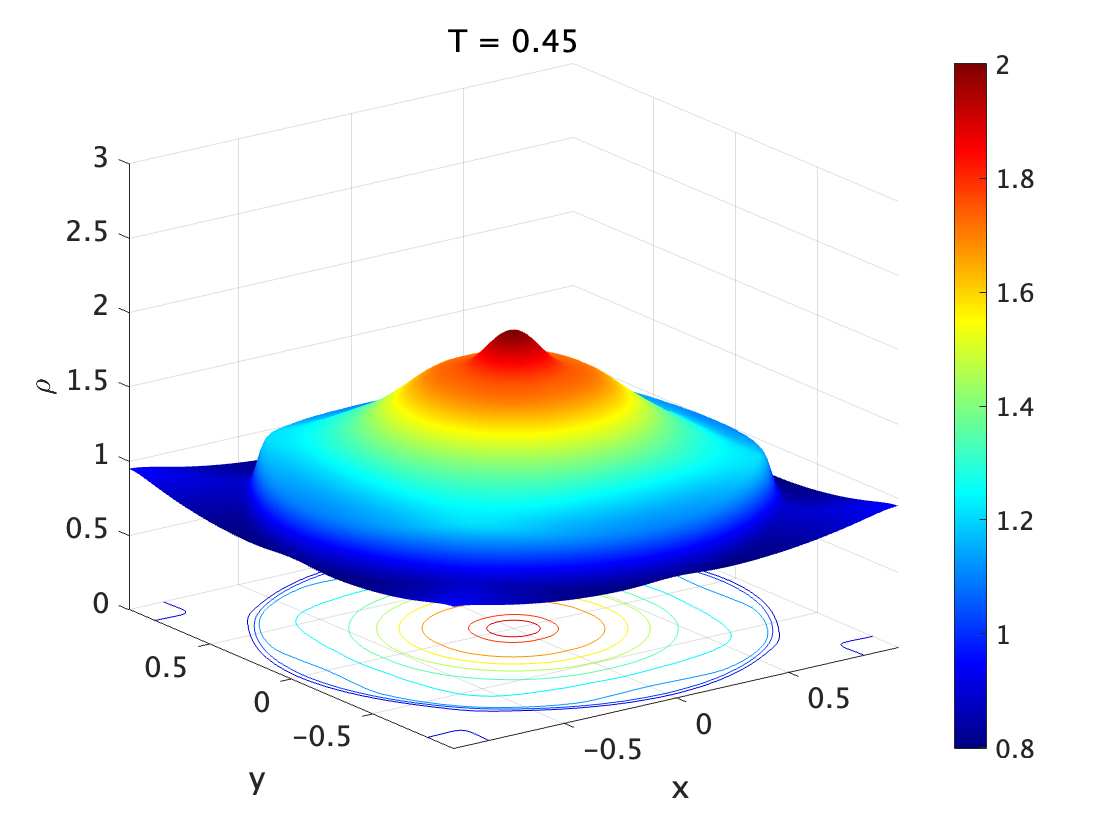} \\
  \includegraphics[height=0.18\textheight]{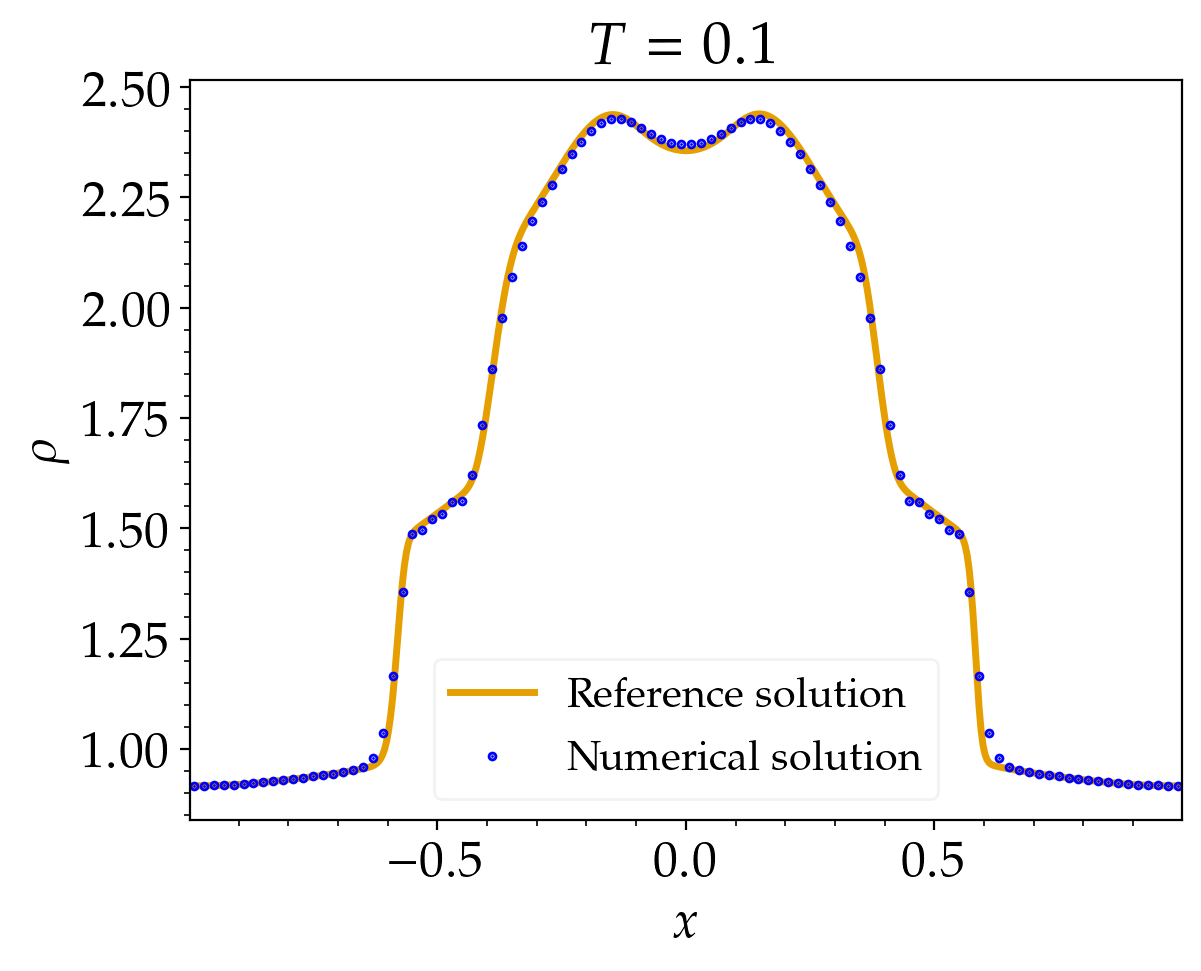} &
  \includegraphics[height=0.18\textheight]{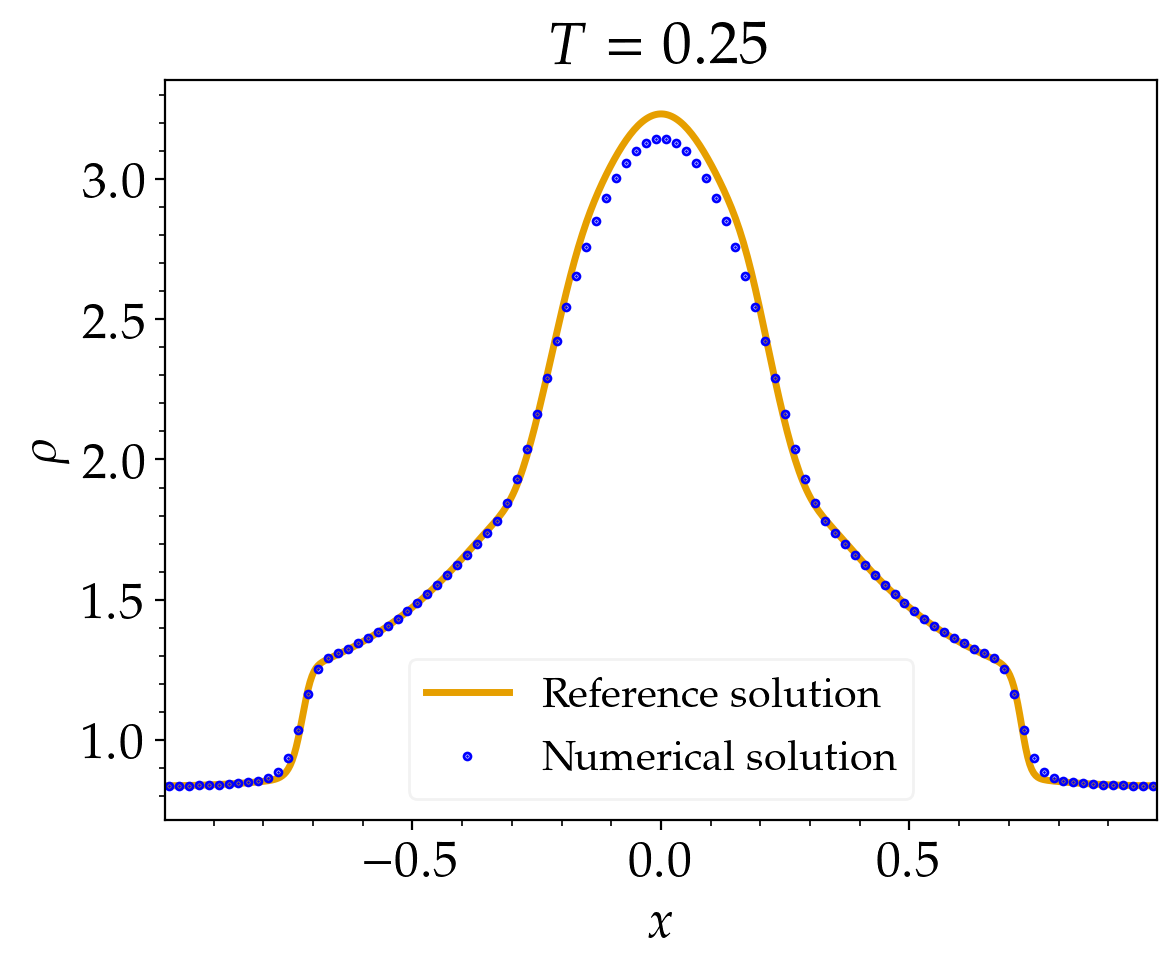} &
  \includegraphics[height=0.18\textheight]{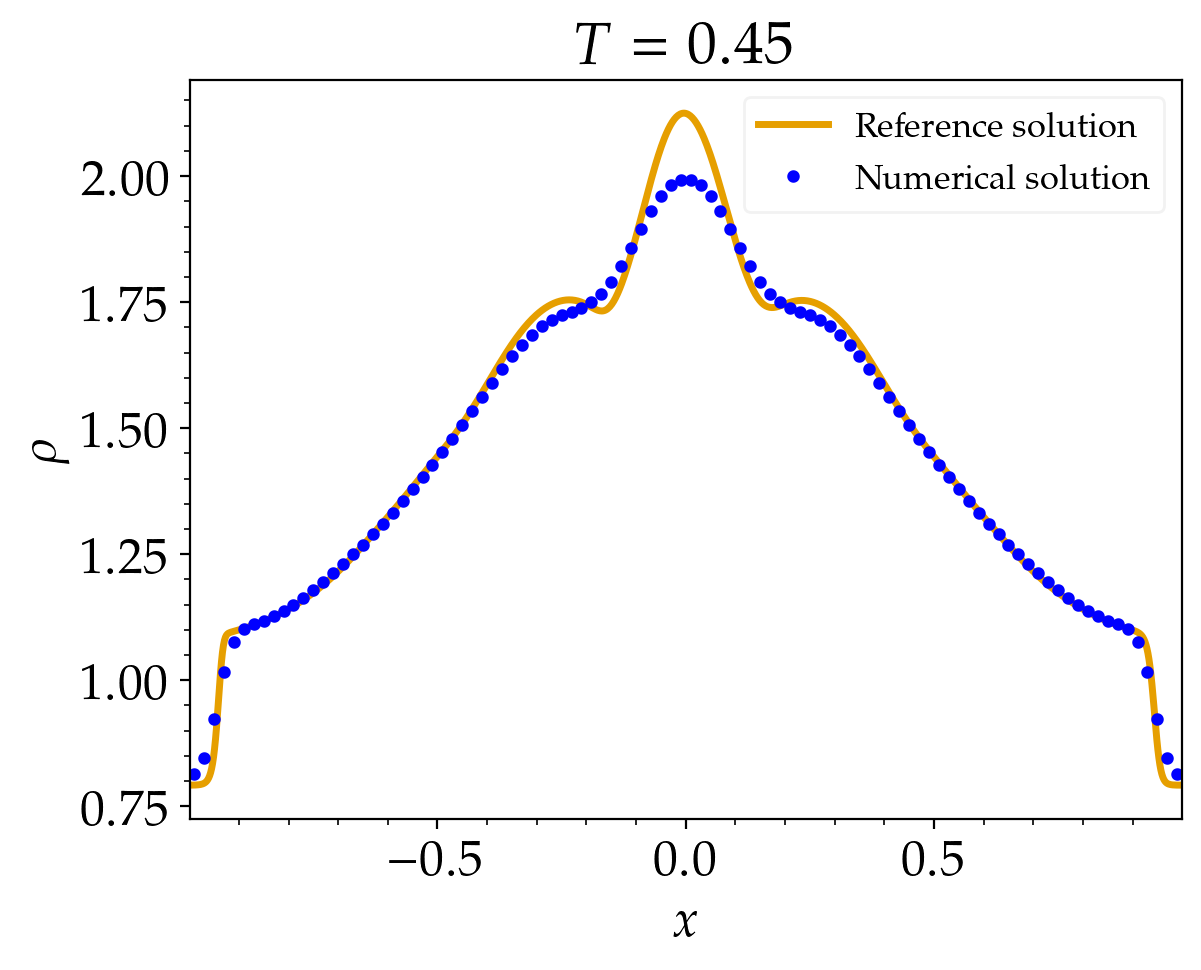}
  \end{tabular}
  \caption{Surface plots of the density (top) and comparison of cuts
    along the $x$-direction (bottom) for the cylindrical explosion
    problem at times $T=0.1,0.25,0.45$ for $\veps=1$.} 
  \label{fig:cyl_exp_eps1p0_den}
\end{figure}
\begin{figure}[htbp]
  \centering
  \includegraphics[height=0.18\textheight]{comparison_cut_v_t0p1}
  \includegraphics[height=0.18\textheight]{comparison_cut_v_t0p25}
  \includegraphics[height=0.18\textheight]{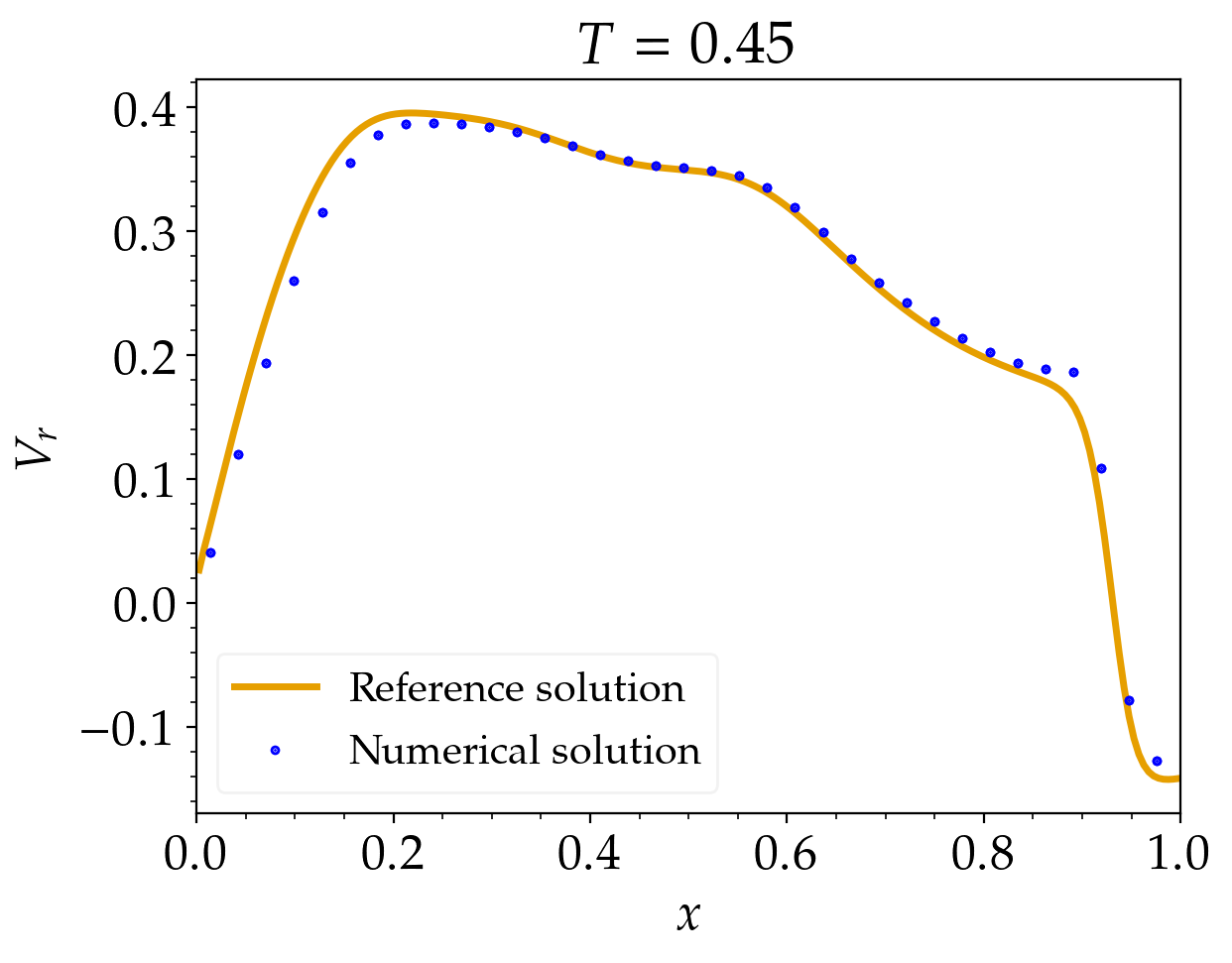}
  \caption{1D cuts of the radial velocity compared to a quasi-exact
    radial solution for the cylindrical explosion problem at times
    $T=0.1,0.25,0.45$ for $\veps=1$.} 
  \label{fig:cyl_exp_eps1p0_vel}
\end{figure}
Finally, in Figure~\ref{fig:cyl_exp_eps0p0001_den} we give the surface plot of
the density deviation from the constant state $1$ computed at time
$T=0.05$ in the incompressible regime for $\veps= 10^{-4}$. We clearly
note that the numerical solution has converged to a constant density
$\rho=1$ with an error $10^{-9}$ and the divergence errors are of the
order $10^{-4}$. The minimum and maximum values of $\eta$ observed for
this problem are $1.5$ and $6.6$, respectively.  
\begin{figure}[htbp]
  \centering
  \includegraphics[height=0.19\textheight]{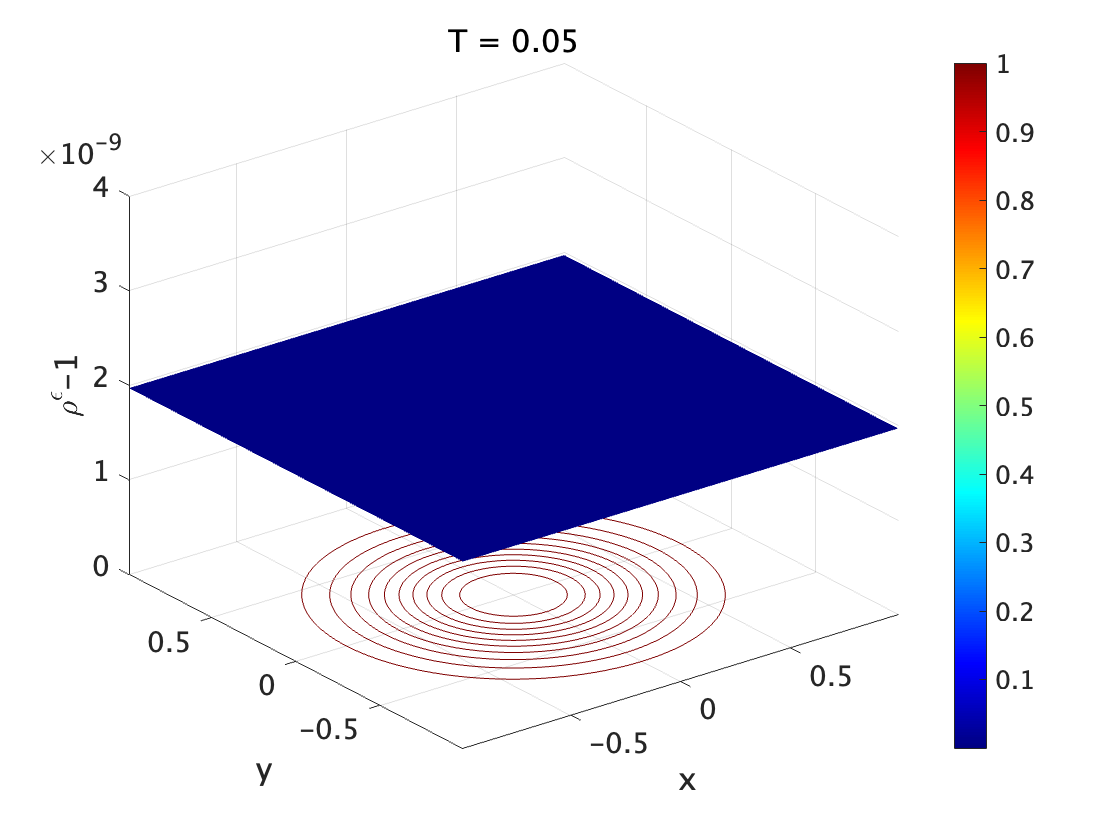}
  \includegraphics[height=0.19\textheight]{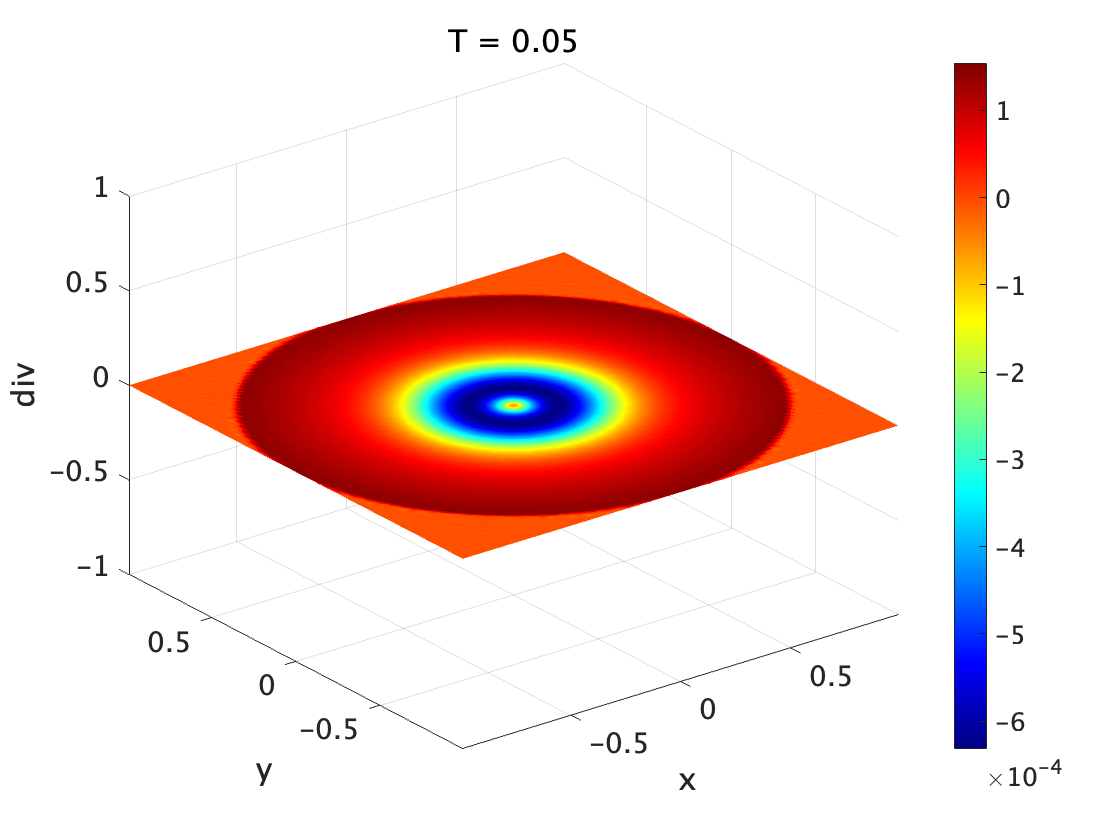}
  \caption{Surface plots of the deviation of density from $1$ and the
    divergence of the velocity for the cylindrical explosion problem
    at time $T=0.05$ for $\veps= 10^{-4}$.}  
  \label{fig:cyl_exp_eps0p0001_den}
\end{figure}

\subsection{Incompressible Problem}
\label{sec:incomp}

We consider the following incompressible initial data from \cite{ES94}
wherein  
\begin{equation*}
  \rho(0,x,y) = 1, \; u(0,x,y) = -\sin x\cos y, \; v(0,x,y) = \cos x\sin y.
\end{equation*}
The pressure law is taken as $p(\rho)=\rho^2$. For this problem an
exact solution is available. We take the computational domain
$[0,2\pi]\times[0,2\pi]$ which is divided into $32 \times 32$ mesh
points. In Figure~\ref{fig:incomp_vort_comp} we compare the exact
vorticity and the numerical vorticity at time $T=2$ for $\veps =
0.01$. The figure clearly indicates the convergence of the numerical
solution to the incompressible solution even on a coarse mesh. In
Table~\ref{tab:incomp1_vort} the relative errors between the exact and 
computed vorticities along with the rates of convergence are
tabulated. We clearly notice first order convergence in all three
norms $L^1,L^2$ and $L^\infty$. We observe $\eta\sim 1.5$ for this
test case. 

\begin{figure}[htbp]
    \centering
    \includegraphics[height=0.21\textheight]{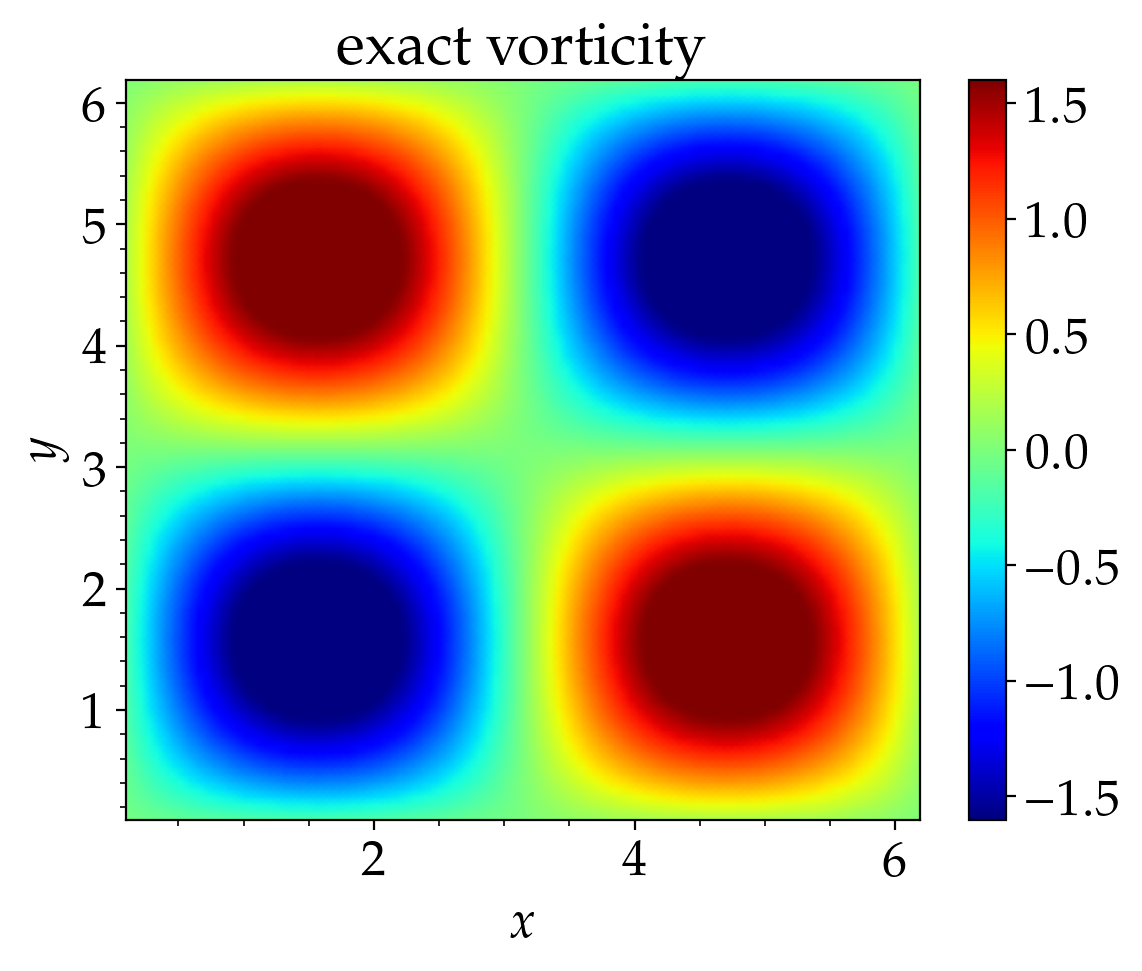}
    \includegraphics[height=0.21\textheight]{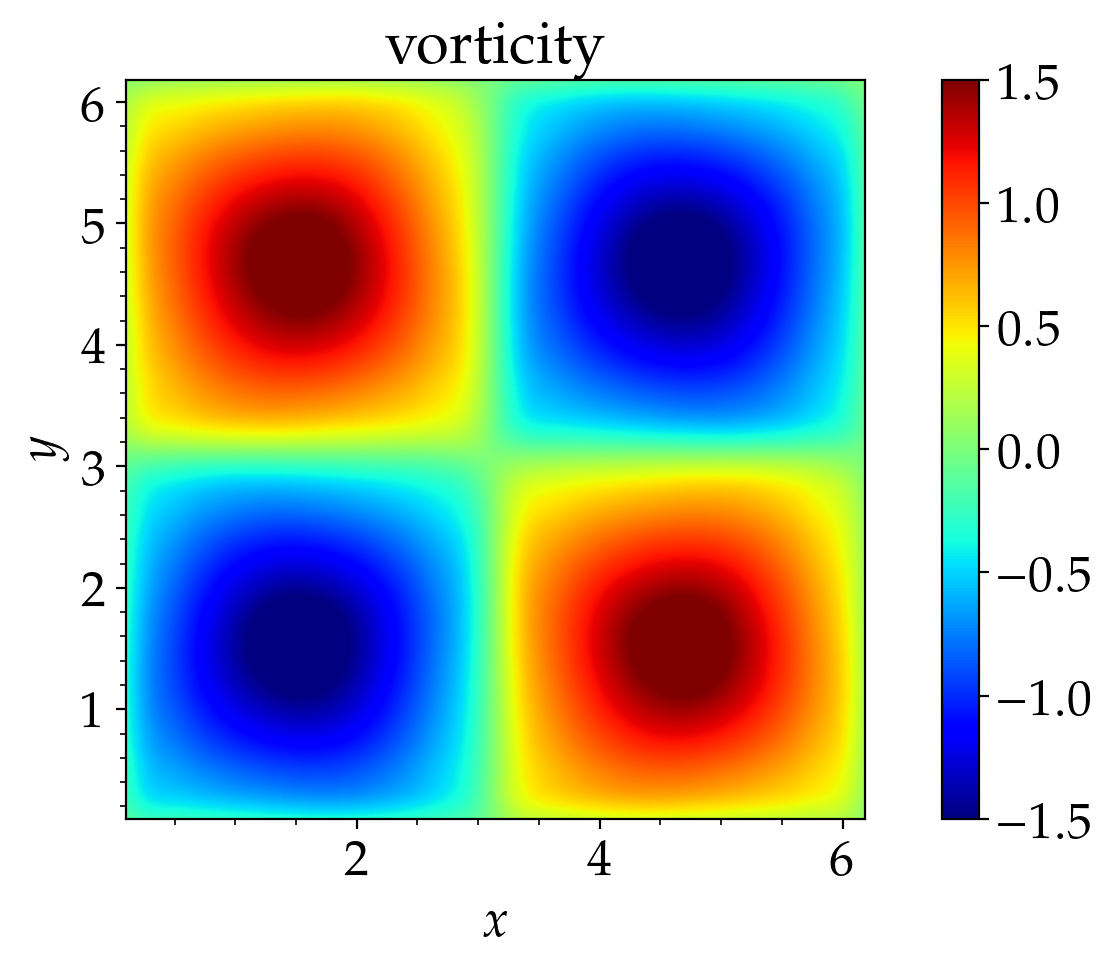}
    \caption{Comparison of the exact and numerical vorticity for the
      incompressible problem for $\veps = 0.01$ at time $T=2$.} 
    \label{fig:incomp_vort_comp}
\end{figure}

\begin{table}[htbp]
    \centering
    \begin{tabular}{|c|c|c|c|c|c|c|}
    \hline
      Grid & $L^1$ error & Rate & $L^2$ error & Rate & $L^\infty$ error & Rate \\
    \hline
      $16\times 16$   &  $0.306722$  &  -                & $0.323460$
                                              &  -   & $0.374421$  &  - \\
      \hline
      $32\times 32$   &  $0.190525$  &  $0.68688$  & $0.192703$  &  $0.747172$ &
                                                                                 $0.202888$
                                                                        &
                                                                          $0.883933$ \\ 
      \hline
      $64\times 64$   &  $0.120086$  &  $0.665848$ & $0.114076$  &  $0.756426$
                                                     &  $0.111018$
                                                                        & $0.869871$ \\ 
    \hline
      $128\times 128$ &  $0.068017$  & $0.820077$ & $0.063206$  & $0.851839$ &
                                                                     $0.058597$  & $0.921893$\\ 
      \hline
      $256\times256$  &  $0.036152$  & $0.911807$ & $0.033302$  & $0.924480$ &
                                                                     $0.030259$  & $0.953451$ \\ 
      \hline
    \end{tabular}
    \caption{The relative errors in vorticity and rates of convergence
      for the incompressible problem in $L^1,L^2$ and $L^\infty$ norms
      for $\veps=0.01$.}  
    \label{tab:incomp1_vort}
\end{table}

\subsection{Shear Flow Problem}
\label{sec:shear_flow}
We consider the shear flow problem from \cite{ES94} with the following
initial data: 
\begin{equation*}
  \rho(0,x,y) = \pi/15, \;  
  u(0,x,y) = \begin{cases}
    \tanh \frac{y-\frac{\pi}{2}}{\pi/15}, & \mbox{if} \ y\leq \pi,
    \\
    \tanh \frac{\frac{3\pi}{2}-y}{\pi/15}, & \mbox{if} \ y> \pi,
  \end{cases} \;
  v(0,x,y) = 0.05\sin x. 
\end{equation*}
The goal of this test problem is two-fold. First, we show the
convergence of the numerical densities to the constant
incompressible density as $\veps \to 0$. To this end, we calculate the 
differences between the computed density and the constant value
$\pi/15$ in $L^2$-norm and tabulate them against $\veps$ in
Table~\ref{tab:sl_rho}. We notice that the errors decay as
$\mcal{O}(\veps^2)$ as $\veps\to 0$ in accordance with the theoretical
results \cite{KM82}. Second, in order to establish numerically the
AP property, we simulate this problem in an incompressible regime by
setting a very small value $\veps = 10^{-6}$. The computational domain
$[0,2\pi]\times[0,2\pi]$ is divided into $256\times 256$ mesh
points. The plots of the vorticity obtained using the present scheme
and the one computed using the numerical solution of the limiting
incompressible scheme
\eqref{eq:dis_incomp_mas}-\eqref{eq:dis_incomp_mom} at times $T=4, 6,
8, 10$ are presented in Figure~\ref{fig:shear_flow_vorticity} and
Figure~\ref{fig:shear_flow_lim_vorticity}, respectively. The plots
clearly show that the scheme can resolve complex vortex structures of
the incompressible flow very well and the numerical solution is
indistinguishable from that of the limiting scheme. The value
$\eta\sim 7.16$ is observed for this test case. 
\begin{table}[htbp]
    \centering
    \begin{tabular}{|c|c|c|c|c|c|c|}
    \hline
    $\veps$ & $10^{-1}$ & $10^{-2}$ & $10^{-3}$ & $10^{-4}$ & $10^{-5}$ & $10^{-6}$ \\
    \hline
    $\norm{\rho^\veps-\pi/15}_{L^2} = 0.6\times$ &  $10^{-2}$  &  $10^{-4}$  &  $10^{-6}$  &  $10^{-8}$ & $10^{-10}$ & $10^{-12}$ \\
    \hline
    \end{tabular}
    \caption{The $L^2$ errors in density for the shear flow problem.} 
    \label{tab:sl_rho}
\end{table} 
\begin{figure} 
  \centering
  \includegraphics[height=0.18\textheight]{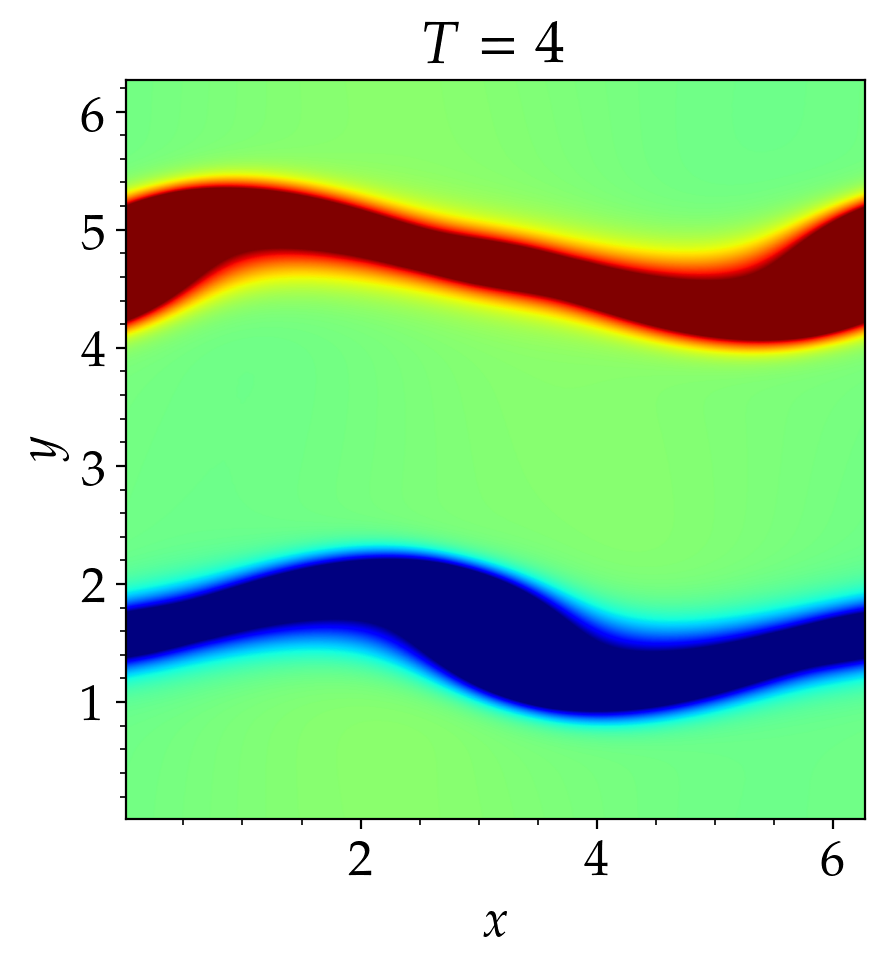}
  \includegraphics[height=0.18\textheight]{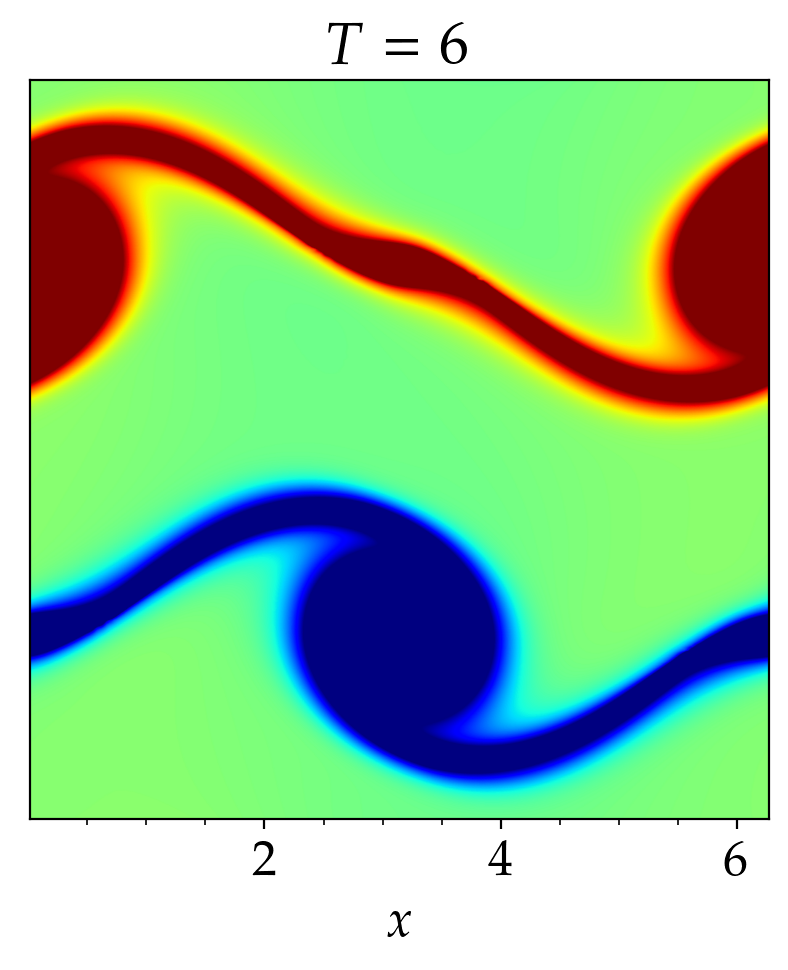}
  \includegraphics[height=0.18\textheight]{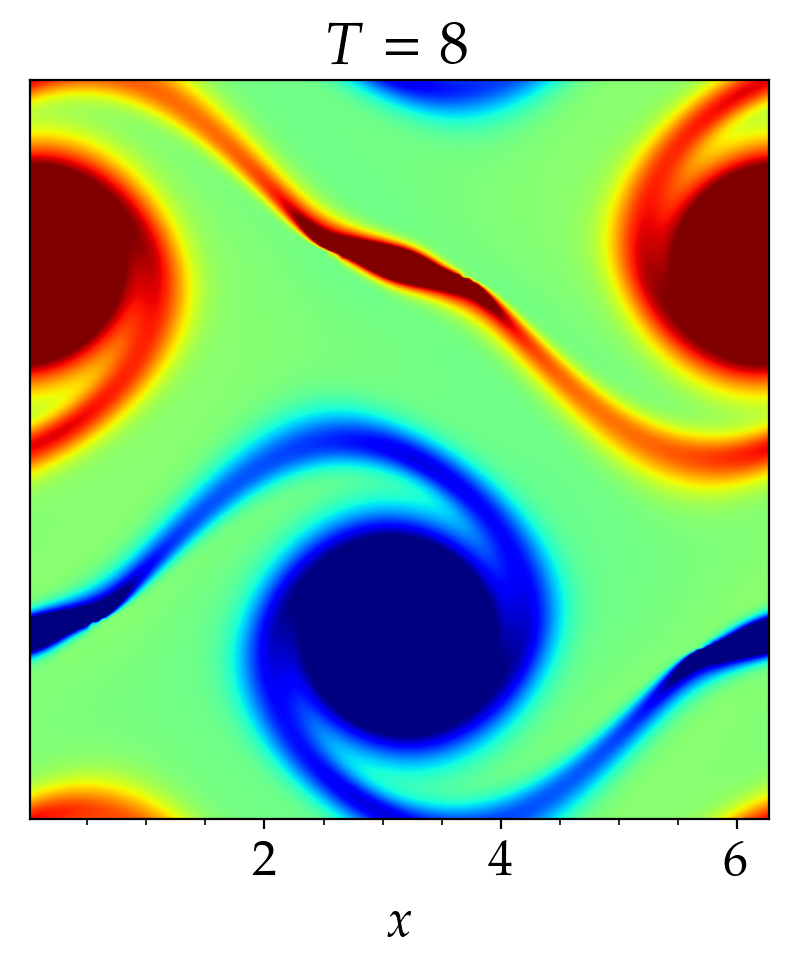}
  \includegraphics[height=0.18\textheight]{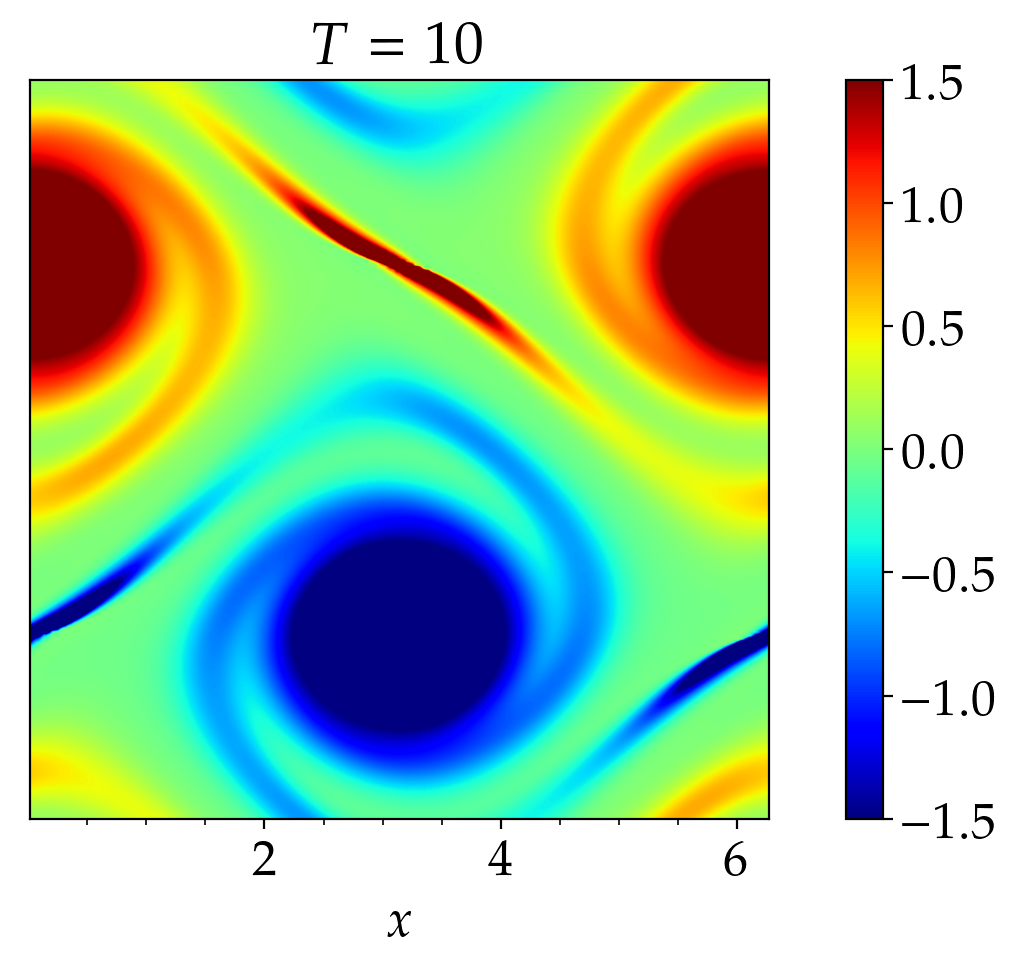}
  \caption{Vorticity for $\veps = 10^{-6}$ at times $T= 4, 6, 8$ and $10$
    for the shear flow problem.} 
  \label{fig:shear_flow_vorticity}
\end{figure}

\begin{figure} 
  \centering  
  \includegraphics[height=0.18\textheight]{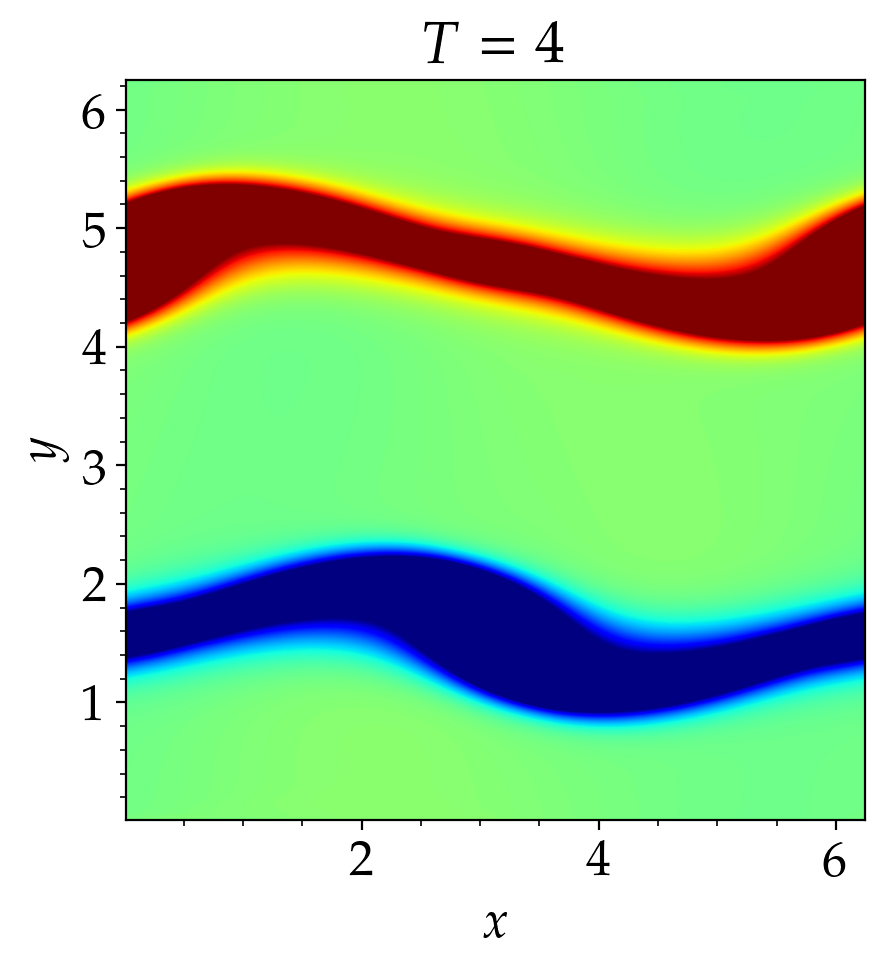}
  \includegraphics[height=0.18\textheight]{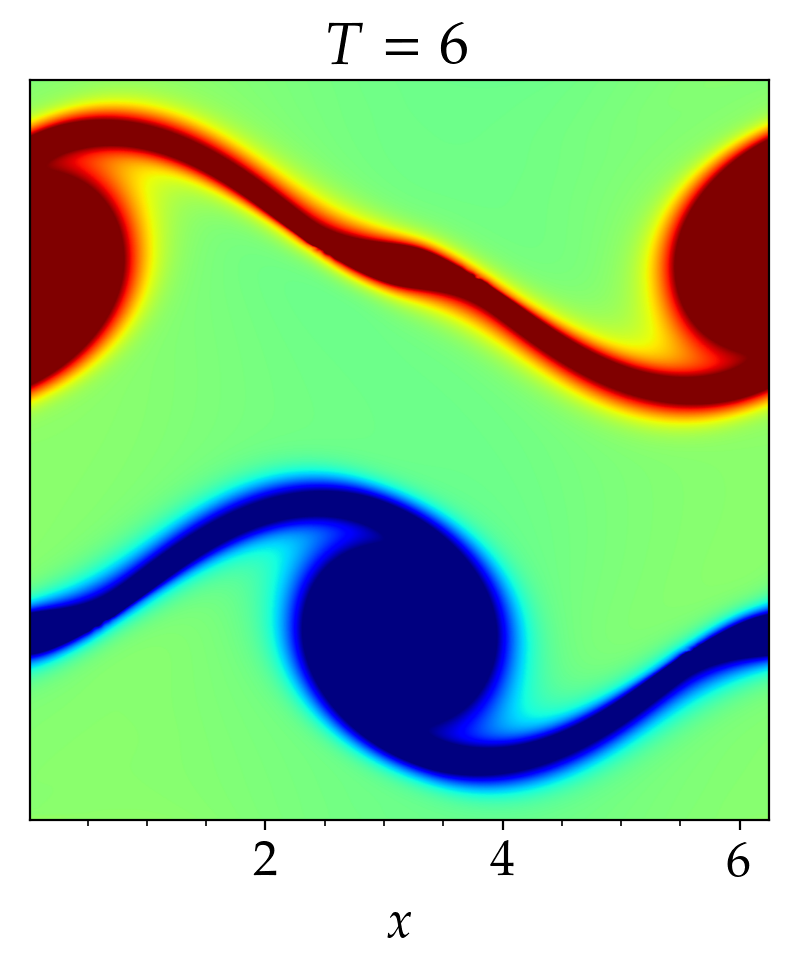}
  \includegraphics[height=0.18\textheight]{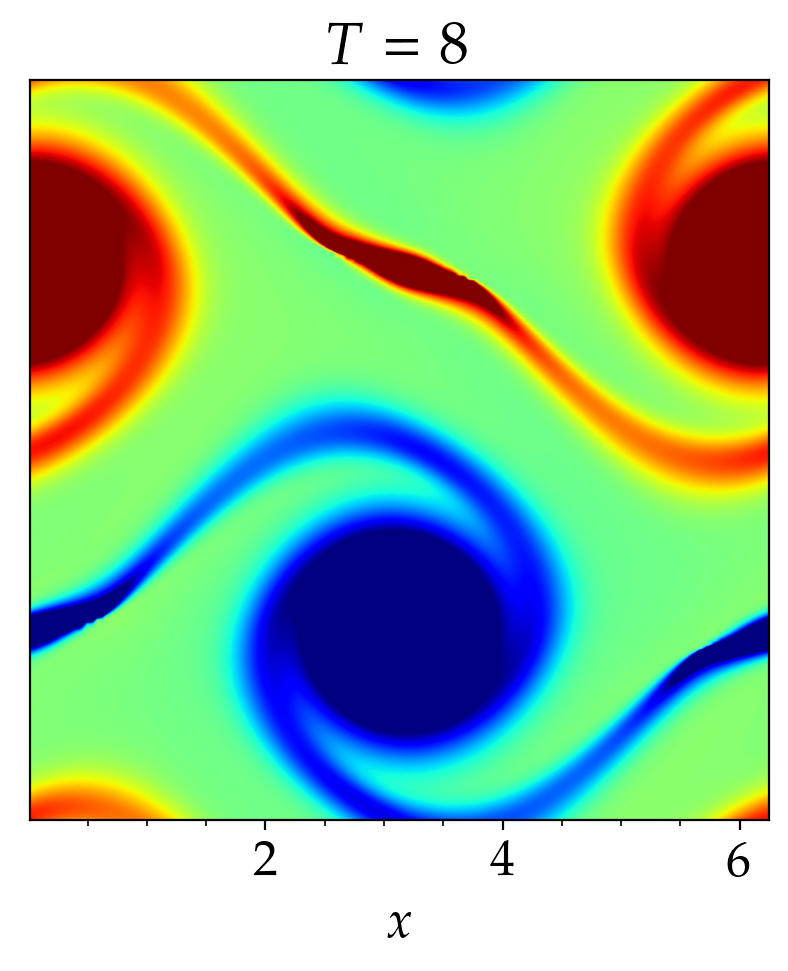}
  \includegraphics[height=0.18\textheight]{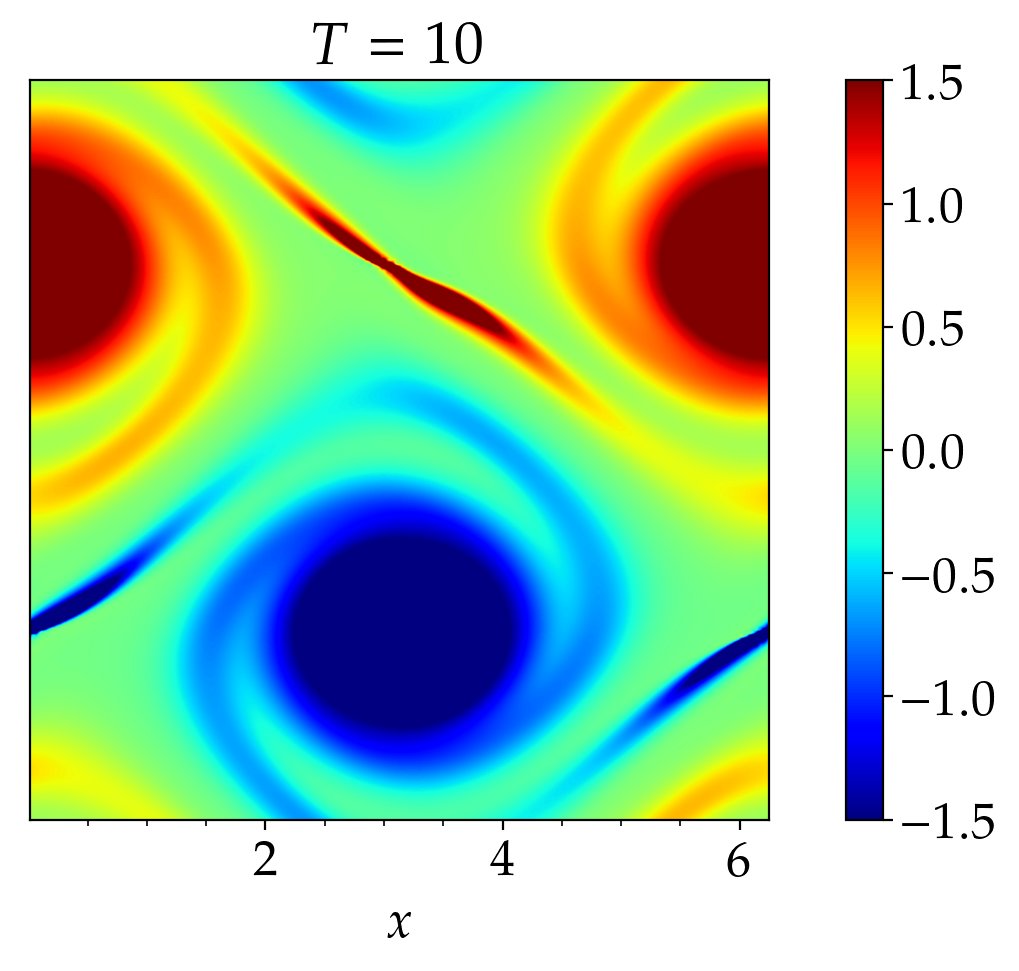}
  \caption{Vorticity at times $T= 4, 6, 8, 10$ for the shear flow
    problem computed using the limiting scheme
    \eqref{eq:dis_incomp_mas}-\eqref{eq:dis_incomp_mom}.}   
  \label{fig:shear_flow_lim_vorticity}
\end{figure}

\section{Concluding Remarks}
\label{sec:conclusion}
An AP and entropy stable scheme for the barotropic Euler system in the
zero Mach number limit is designed, analysed and implemented. The
entropy stability is achieved by introducing a velocity shift
proportional to the pressure gradient in the convective fluxes, which
is responsible for the dissipation of mechanical energy. The numerical
scheme obtained by using semi-implicit in time and upwind in space
finite volume discretisations, possesses several attractive features:
the positivity of density, the entropy stability and the consistency
with continuous equations when the grid parameters go to zero. The
scheme is stable under a CFL condition which permits large time-steps
at low Mach numbers. The apriori entropy stability estimate leads to a
bound on the second order pressure which in turn allows the passage to
the zero Mach number limit to get a consistent, velocity stabilised
scheme for the incompressible limit system. The results of numerical
case studies clearly validate the positivity preservation, the correct shock
capturing capabilities in the compressible regimes, the ability to
simulate weakly compressible flows even for ill-prepared initial data
and the AP property in the zero Mach number limit with exact
convergence rates with respect to the vanishing Mach number. Numerical 
experiments indicate a first order convergence for the scheme.

\section*{Data Availability and Declaration}
\label{sec:data-availability}

Enquiries about data availability should be directed to the
authors. The authors declare to have no conflict of interest. 

\bibliographystyle{abbrv}
\bibliography{references}

\end{document}